\documentclass[reqno]{amsart}
\usepackage{amsmath}
\usepackage{amsthm}
\usepackage{graphicx}
\usepackage{color}
\usepackage{amsfonts}
\usepackage{amssymb}
\usepackage{enumerate}
\parindent=.25in
\parskip=.5ex

\addtolength{\oddsidemargin}{-.5in}
\addtolength{\evensidemargin}{-.5in}
\addtolength{\textwidth}{1in}
\addtolength{\topmargin}{-.5in}
\addtolength{\textheight}{1in}

\numberwithin{equation}{section}

\newtheorem{theorem}{Theorem}[section]
\newtheorem{lemma}[theorem]{Lemma}
\newtheorem{proposition}[theorem]{Proposition}

\theoremstyle{definition}

\newtheorem{definition}[theorem]{Definition}
\newtheorem{remark}[theorem]{Remark}

\newenvironment{assumption}[1]
  {\innercustomthm}
  {\endinnercustomthm}

\def\E{{\mathbb E}}
\def\R{{\mathbb R}}
\def\N{{\mathbb N}}
\def\P{{\mathcal P}}
\def\RC{{\mathcal R}}
\def\B{{\mathcal B}}
\def\V{{\mathcal V}}
\def\M{{\mathcal M}}
\def\H{{\mathcal H}}
\def\X{{\mathcal X}}

\def\Q{{\mathcal Q}}

\def\G{{\mathcal G}}

\def\W{{\mathcal W}}

\def\A{{\mathcal A}}
\def\F{{\mathcal F}}

\def\C{{\mathcal C}}

\title{Mean field games with common noise}
\author{Ren\'e Carmona, Fran\c{c}ois Delarue, and Daniel Lacker}

\begin{document}

\begin{abstract}
A theory of existence and uniqueness is developed for general stochastic differential mean field games with common noise. The concepts of strong and weak solutions are introduced in analogy with the theory of stochastic differential equations, and existence of weak solutions for mean field games is shown to hold under very general assumptions. Examples and counter-examples are provided to enlighten the underpinnings of the existence theory. Finally, an analog of the famous result of Yamada and Watanabe is derived, and it is used to prove existence and uniqueness of a strong solution under additional assumptions.
\end{abstract}

\maketitle

\section{Introduction}
While mean field games have been around for quite some time in one form or another, especially in economics, the 
theoretical framework underlying the present work goes back to the pioneering works of Lasry and Lions 
\cite{lasrylionsmfg}, and Huang, Malham\'e and Caines \cite{huangmfg1}. The basic idea is to describe asymptotic consensus among a large population of optimizing individuals interacting with each other in a mean-field way, and subject to constraints of energetic or economical type. The strategy is to take advantage of the mean-field interaction to reduce the analysis of the consensus to a control problem for one single representative individual evolving in, and interacting with, the environment created by
the aggregation of the other individuals. Intuitively, when consensus occurs, symmetries in the system are expected to force the individuals to obey a form law of large numbers and satisfy a propagation of chaos phenomenon  as the size of the population grows. In most of the existing works following \cite{lasrylionsmfg}, and  \cite{huangmfg1}, the sources of randomness in the dynamics of the population are assumed to be independent
from one individual to another.  The purpose of this paper is to analyze the case of correlated randomness in a general setting.  

We concentrate on stochastic differential games for which the epitome of the models can be described as follows.
Given a finite time horizon $T>0$, we start with an $N$-player stochastic differential game, in which the private state process $X^i$ of player $i$ is given by
the solution of the stochastic differential equation:
\begin{align*}
&dX^i_t = b(t,X^i_t,\bar{\mu}^N_t,\alpha^i_t)dt + \sigma(t,X^i_t,\bar{\mu}^N_t)dW^i_t + \sigma_0(t,X^i_t,\bar{\mu}^N_t)dB_t, \quad \text{for} \ t \in [0,T],
\\
&\text{with} \quad \bar{\mu}^N_t = \frac{1}{N}\sum_{j=1}^n\delta_{X^j_t}. 
\end{align*}
Here $B$ is a Wiener process called the \emph{common noise}, and $W^1,\ldots,W^N$ are independent Wiener processes, independent of $B$. The processes $W^1,\ldots,W^N$ are called the \emph{independent} or \emph{idiosyncratic noises}. The objective of player $i$ is to choose a control $\alpha^i$ in order to maximize
the quantity:
\[
J^i(\alpha^1,\dots,\alpha^N) := \E\biggl[\int_0^Tf(t,X^i_t,\bar{\mu}^N_t,\alpha^i_t)dt + g(X^i_T,\bar{\mu}^N_T)\biggr],
\]
the difficulty coming from the fact that these $N$ optimizations are conducted simultaneously.
Besides the correlations coming through the common noise $B$, the optimization problems are coupled through the marginal 
empirical distributions $(\bar{\mu}^N_t)_{t \in [0,T]}$ of the state processes. Additionally, the individuals share the same coefficients and objective functions, and thus the game is symmetric as long as the initial conditions $X^1_0,\ldots,X^N_0$ are exchangeable.

The symmetry is a very important feature of mean field games. However, since the controls are allowed to differ from one player to another, 
the expected reward functionals $J^1,\dots,J^N$ may not be the same. In particular,
except for some very specific cases, there is no hope to find controls $\alpha^1,\dots,\alpha^N$ that maximize
simultaneously  all the reward functionals $J^1,\dots,J^N$. Instead of a global maximizer, the idea
of consensus is formalized by the concept of \textit{Nash equilibrium}. In short, an $N$-tuple $(\alpha^{1,\star},\dots,\alpha^{N,\star})$ is a Nash equilibrium if 
the reward $J^i$ of the particle $i$ attains a maximum at $\alpha^{i,\star}$
when all the other particles $j \not = i$ use the controls $\alpha^{j,\star}$. 
Because of the symmetric structure of the game, it then makes sense to investigate 
the asymptotic behavior of exchangeable equilibria. Drawing intuition from the theory of propagation of chaos, one may anticipate that effective equations may hold in the limit as the number of players $N$ tends to infinity, and hope that their solutions may be more manageable than the search for Nash equilibria for large stochastic differential games of the type described above. This is the rationale for the formulation of the \emph{mean field game} (MFG) problem introduced in \cite{lasrylionsmfg}, and  \cite{huangmfg1}. See also \cite{carmona:delarue:4lyons} for recent developments. We stress that the goal of the present paper is not to justify the passage to the limit, but to study the resulting asymptotic optimization problem. 

This informal discussion suggests that the MFG is essentially an asymptotic formulation of the game, in which the influence of each player on the empirical measure is small, hinting at the fact that the asymptotic optimization problems could be decoupled and identical in nature. Put differently, the limiting equilibrium problem consists of a standard optimization problem for one representative player only (instead of $N$) interacting (competing) with the environment provided by the asymptotic 
behavior (as $N$ tends to $\infty$) of the marginal empirical measures $(\bar\mu^N_{t})_{t \in [0,T]}$ corresponding to an exchangeable equilibrium 
 $(\alpha^{1,\star},\dots,\alpha^{N,\star})$.  Without common noise,  the classical law of large numbers says that  the limit 
 environment should be a deterministic flow of  probability measures $(\mu_{t})_{t \in [0,T]}$ 
describing the statistical distribution of the population in equilibrium. 
When $\sigma_{0}$ is non-zero, the impact of the common noise does not average out, and since it does not disappear, the limiting environment must be given by  a stochastic flow $(\mu_{t})_{t \in [0,T]}$ of probability measures describing the conditional distribution of the population in equilibrium \textit{given the common noise}.
Therefore, we introduce the following generalization to the MFG problem proposed in \cite{lasrylionsmfg,huangmfg1,carmonadelarue-mfg} in the absence of common noise:
\begin{enumerate}
\item For a fixed adapted process $(\mu_t)_{t \in [0,T]}$ with values in the space $\P(\R^d)$ of probability measures on $\R^d$, solve the optimal control problem given by
\begin{align}
\sup_\alpha \ &\E\left[\int_0^Tf(t,X_t,\mu_t,\alpha_t)dt + g(X_T,\mu_T)\right], \text{ such that} \label{introOPT} \\
 dX_t &= b(t,X_t,\mu_t,\alpha_t)dt + \sigma(t,X_t,\mu_t)dW_t + \sigma_0(t,X_t,\mu_t)dB_t. \label{introSDE}
\end{align}
\item Given an optimal control, find the corresponding conditional laws $(\mu^\star_t)_{t \in [0,T]}$ of the optimally controlled state process $(X^\star_t)_{t \in [0,T]}$ given $B$.
\item Find a fixed point $(\mu_t)_{t \in [0,T]}$, such that the resulting $\mu^\star_t$ equals $\mu_t$ for all $t \in [0,T]$.
\end{enumerate}
The \emph{fixed point problem} or \emph{consistency condition} (3) 
characterizes the fact that, under the conditional equilibrium measure, the optimal state 
(conditional on $B$)
must be typical 
of the population. This is exactly the usual MFG problem except for the fact that the solution $(\mu_{t})_{t \in [0,T]}$ is now a \emph{random} measure flow. Again, the conditioning on $B$ appears because the effect of the independent noises $W^i$ on the empirical measure flow averages out as $N$ tends to infinity, but the effect of the common noise $B$ does not.

The goal of this paper is to discuss the existence and, possibly, the uniqueness of an equilibrium in the presence of a common noise. 
Often times,  the proof of the existence of an equilibrium without common noise relies on 
Schauder's fixed point theorem, applied to a compact subset of the space
${\mathcal C}([0,T],{\mathcal P}(\R^d))$ of continuous functions
from $[0,T]$ into the space of probability measures on $\R^d$. 
The application of Schauder's theorem is then quite straightforward
as the standard topology on ${\mathcal C}([0,T],{\mathcal P}(\R^d))$ is simple, 
the compact subsets being easily described by means of classical tightness arguments.
In the presence of a common noise, the problem is much more complicated, as the natural space in which one searches for 
the fixed point is   $[{\mathcal C}([0,T],{\mathcal P}(\R^d))]^{\Omega}$, where $\Omega$ denotes the underlying 
probability space carrying the common noise. Except when 
$\Omega$ is finite, this space is far too large and it is too difficult to find compact subsets left invariant by the transformations of interest.
For that reason, the existence proof is done first on the level of a discretized version of the mean field game, in which the conditioning on
the common noise $B$ in the step (2) of the MFG procedure is replaced by a conditioning on a finitely-supported approximation of $B$. 
The introduction of such a discretization procedure seems to be original in the context of MFG problems, and the approximation of the full fledge
MFG by finite-time finite-space MFG appears to be a powerful idea on its own. See for example \cite{lacker-mfgcontrolledmartingaleproblems} for related developments.
Most importantly, this discretization procedure crucially bypasses a key technical difficulty: in general, the operation of \emph{conditioning} fails to be continuous in any useful sense, and this puts a wrench in any effort to directly apply fixed point theorems. However, when the conditioning $\sigma$-field is finite, enough continuity is recovered; for example, if $X,Y,Y_n$ are random variables, $X$ is nonatomic, and $\G$ is a finite sub-$\sigma$-field of $\sigma(X)$, then $(X,Y_n) \rightarrow (X,Y)$ in distribution implies $\text{Law}(Y_n \ | \ \G) \rightarrow \text{Law}(Y \ | \ \G)$ (weakly) in distribution. Exploiting this remark, the existence proof for the discretized MFG becomes a simple application of Kakutani's fixed point theorem,  Kakutani's fixed point theorem being preferred to Schauder's because of the possible existence of multiple optimal controls.

The existence for the true mean field game is then obtained by refining the discretization, proving tightness of the sequence of solutions and taking limits. In this way, solutions 
are constructed as \textit{weak limits} and read as \textit{weak MFG solutions}. 
The word \emph{weak} refers to the fact that in the limit, the fixed point $(\mu_{t})_{t \in [0,T]}$ may not be adapted to the filtration of the 
common noise $B$ any longer. Such a phenomenon is well-known in stochastic calculus: when 
solving a stochastic differential equation, solutions 
need not be adapted with respect to the noise driving the equation, in which case they are called \textit{weak}. We use here the same terminology. 
Because of that lack of adaptedness, we weaken the fixed point condition and merely require $\mu = \text{Law}(X \ | \ B,\mu)$.

We refer to a solution of the fixed point problem (1--3)
with the more desirable fixed point condition $\mu = \text{Law}(X \ | \ B)$
as a \emph{strong MFG solution}. A strong solution is then a weak solution for which the measure flow $\mu$ happens to be measurable with respect to the common noise $B$. 
Again, the terminology \textit{strong} is used in analogy with the theory of stochastic differential equations. This brings us to the famous result by Yamada and Watanabe
 \cite{yamadawatanabe-uniqueness} 
in stochastic analysis: whenever a stochastic differential equation has the
pathwise uniqueness property, any weak solution is in fact a strong solution. 
In this paper, we develop a similar notion of pathwise uniqueness for mean field games and provide an analog of the theorem of Yamada and Watanabe in this context. From this result we conclude that, whenever pathwise uniqueness holds for a MFG with common noise, the unique weak solution is in fact a strong solution, which then completes our program. 

Our analysis relies on one important additional ingredient.
In order to guarantee compactness (or at least closedness) of the sets of controls 
in a sufficiently weak sense, it is also useful for existence proofs to enlarge the family of admissible controls. Precisely, we allow for relaxed (i.e. measure-valued) controls which may be randomized externally to the inputs of the control problems.
With this extension, we first treat the case when controls take values in 
a compact set and the state coefficients $b$, $\sigma$ and $\sigma_{0}$
are bounded. Another approximation procedure is then needed to derive the general case. 
Existence and the limiting arguments are all derived at the level of the joint law of $(B,W,\mu,\alpha,X)$ in a suitable function space. In the search for a weak MFG solution, the filtration of the control problem is generated by the two Wiener processes $B$ and $W$ but also by the measure flow $\mu$, which we do not require to be adapted to $B$ or $W$. Allowing the controls to be randomized externally to the inputs $(B,W,\mu)$ requires specifying an admissible family of enlargements of the probability space supporting these inputs. Because the original filtration is not necessarily Brownian, special care is needed in choosing the correct type of allowable extensions. This leads to the important, though rather technical, notion of  \emph{compatibility}. The delicate measure theoretic arguments required for the proof are described in detail in  Subsection \ref{subse:setup}.

The main contributions of the paper are as follows. We prove first that there exists a weak MFG solution under general assumptions. Under additional convexity assumptions we derive existence results without relaxed or externally randomized controls. Under a monotonicity assumption due to Lasry and Lions \cite{lasrylionsmfg}, we prove that pathwise uniqueness
holds and, as a consequence, that existence and uniqueness hold in the strong sense.
Our results appear to be the first general existence and uniqueness results for mean field games with common noise, which have been touted in various forms in \cite{ahuja-mfgwellposedness,gueantlasrylionsmfg,gueantlasrylionsmfg-growth,carmonafouque-systemicrisk,carmona:delarue:4lyons,bensoussan-masterequation,gomessaude-mfgsurvey}. The latter papers \cite{carmona:delarue:4lyons,bensoussan-masterequation,gomessaude-mfgsurvey} discuss the formulation of the problem in terms of the \emph{master equation}, which is a single partial differential equation (PDE) in infinite dimension which summarizes the entire system. Ahuja \cite{ahuja-mfgwellposedness} finds (in our terminology) strong solutions of a class of essentially linear-quadratic mean field games with common noise, but with non-quadratic terminal objective $g$. The papers \cite{gueantlasrylionsmfg,gueantlasrylionsmfg-growth} of Gu\'eant et al. solve explicitly some specific common noise models of income distribution. On the other hand, Carmona et al. \cite{carmonafouque-systemicrisk} compute explicit solutions for both the finite-player game and the mean field game in a certain linear-quadratic common noise model, verifying directly the convergence as the number of agents tends to infinity. Although we will not discuss finite-player games in this paper, a follow-up paper will provide rigorous convergence results.

The analysis of this paper allows for degenerate volatilities and thus includes mean field games without common noise (where $\sigma_0 \equiv 0$) and deterministic mean field games (where $\sigma_0 \equiv \sigma \equiv 0$). However, the solutions we obtain still involve \emph{random} measure flows and are thus weaker than the MFG solutions typically considered in the literature. For background on mean field games without common noise, refer to \cite{lasrylionsmfg,cardaliaguet-mfgnotes} for PDE-based analysis and \cite{carmonadelarue-mfg,bensoussan-mfgbook} for a more probabilistic analysis. The analysis of \cite{carmonalacker-probabilisticweakformulation} and especially \cite{lacker-mfgcontrolledmartingaleproblems} are related to ours in that they employ weak formulations of the optimal control problems. The latter paper \cite{lacker-mfgcontrolledmartingaleproblems} especially mirrors ours in several ways, in particular in its use of relaxed controls in conjunction with Kakutani's theorem as well as measurable selection arguments for constructing strict (non-relaxed) controls. However, the presence of common noise necessitates a much more careful formulation and analysis of the problem.

The paper is organized as follows. First, Section \ref{se:strongsolutionsanddiscretization} discusses the main assumptions \ref{assumption:A}, definitions of strong MFG solutions, and existence of discretized MFG solutions. Section \ref{se:weaksolutions} defines weak MFG solutions in detail, discusses some of their properties, and proves existence by refining the discretizations of the previous section and taking limits. Section \ref{se:strictstrongcontrols} discusses how to strengthen the notion of control, providing general existence results without relaxed controls under additional convexity hypotheses. The brief Section \ref{se:counterexamples} discusses two counterexamples, which explain why we must work with weak solutions and why we cannot relax the growth assumptions placed on the coefficients. Uniqueness is studied in Section \ref{se:uniqueness}, discussing our analog of the Yamada-Watanabe theorem and its application to an existence and uniqueness result for strong MFG solutions.

\section{Strong MFG solutions and discretization} \label{se:strongsolutionsanddiscretization}

\subsection{General set-up and standing assumption}
Fix a time horizon $T > 0$. For a measurable space $(\Omega,\F)$, let $\P(\Omega,\F)$ denote the set of probability measures on $(\Omega,\F)$. When the $\sigma$-field is understood, we write simply $\P(\Omega)$. When $\Omega$ is a metric space, let $\B(\Omega)$ denote its Borel $\sigma$-field, and endow $\P(\Omega)$ with the topology of weak convergence.  Let $\C^k = C([0,T];\R^k)$ denote the set of continuous functions from $[0,T]$ to $\R^k$. Define 
the evaluation mappings $\pi_{t}$ on $\C^k$ by $\pi_{t}(x)=x_{t}$ and 
the truncated supremum norms $\|\cdot\|_t$ on $\C^k$ by
\[
\|x\|_t := \sup_{s \in [0,t]}|x_s|, \ t \in [0,T].
\]
Unless otherwise stated, $\C^k$ is endowed with the norm $\|\cdot\|_T$.
Let $\W^k$ denote Wiener measure on $\C^k$. For $\mu \in \P(\C^k)$, let $\mu_t \in \P(\R^k)$ denote the image of $\mu$ under $\pi_{t}$. 
For $p \ge 0$ and a separable metric space $(E,\ell)$, 
let $\P^p(E)$ denote the set of $\mu \in \P(E)$ with $\int_E \ell^p(x,x^0)\mu(dx) < \infty$ for some (and thus for any) $x^0 \in E$. For $p \ge 1$ and $\mu,\nu \in \P^p(E)$, let $\ell_{E,p}$ denote the $p$-Wasserstein distance, given by
\begin{align}
\ell_{E,p}(\mu,\nu) := \inf\left\{\left(\int_{E \times E}\gamma(dx,dy) \ell^p(x,y)\right)^{1/p} : \gamma \in \P(E \times E) \text{ has marginals } \mu,\nu\right\} \label{def:wasserstein}
\end{align}
Unless otherwise stated, the space $\P^p(E)$ is equipped with the metric $\ell_{E,p}$, and $\P(E)$ has the topology of weak convergence. Both are equipped with the corresponding Borel $\sigma$-fields, which coincide
with the $\sigma$-field generated by the mappings $\P^p(E) \, (\text{resp.} \, \P(E))
 \ni \mu \mapsto \mu(F)$, $F$ being any Borel subset of $E$.  
 Appendix \ref{ap:wasserstein} discusses the topological properties of Wasserstein distances relevant to this paper.

We are given two exponents $p',p \ge 1$, a control space $A$, and the following functions:
\begin{align*}
(b,f) &: [0,T] \times \R^d \times \P^p(\R^d) \times A \rightarrow \R^d \times \R, 
\\ 
(\sigma,\sigma_{0}) &: [0,T] \times \R^d \times \P^p(\R^d) \rightarrow \R^{d \times m}
\times \R^{d \times m_0}, 
\\ 
g &: \R^d \times \P^p(\R^d) \rightarrow \R.
\end{align*}
The standing assumptions for our existence and convergence theorems are as follows. Continuity and measurability statements involving $\P^p(\R^d)$ are with respect to the Wasserstein distance $\ell_{\R^d,p}$ and its corresponding Borel $\sigma$-field.

\begin{assumption}{\textbf{A}} \label{assumption:A} 
The main results of the paper will be proved under the following assumptions, which we assume to hold throughout the paper:
\begin{enumerate}
\item[(A.1)] $A$ is a closed subset of a Euclidean space. (More generally, as in \cite{haussmannlepeltier-existence}, a closed $\sigma$-compact subset of a Banach space would suffice.)
\item[(A.2)] $p' > p \ge 1 \vee p_\sigma$, $p_\sigma \in [0,2]$, and $\lambda \in \P^{p'}(\R^d)$. (Here $a \vee b := \max(a,b)$.)
\item[(A.3)] The functions $b$, $\sigma$, $\sigma_0$, $f$, and $g$ of $(t,x,\mu,a)$ are jointly measurable and are continuous in $(x,\mu,a)$ for each $t$.
\item[(A.4)] There exists $c_1 > 0$ such that, for all $(t,x,y,\mu,a) \in [0,T] \times \R^d \times \R^d \times \P^p(\R^d) \times A$,
\begin{align*}
|b(t,x,\mu,a) - b(t,y,\mu,a)| &+ |(\sigma,\sigma_{0})(t,x,\mu) - (\sigma,\sigma_{0})(t,y,\mu)| 
 \le c_1|x-y|,
\end{align*}
and
\begin{equation*}
\begin{split}
&|b(t,0,\mu,a)| \le c_1\left[1 + \left(\int_{\R^d}|z|^p\mu(dz)\right)^{1/p} + |a| \right], 
\\
&|\sigma(t,x,\mu)|^2 + |\sigma_0(t,x,\mu)|^2 \le c_1\left[1 + |x|^{p_\sigma} + \left(\int_{\R^d}|z|^p\mu(dz)\right)^{p_\sigma/p}\right]x	.
\end{split}
\end{equation*}
\item[(A.5)] There exist $c_2, c_3 > 0$ such that, for each $(t,x,\mu,a) \in [0,T] \times \R^d \times \P^p(\R^d) \times A$, 
\begin{align*}
-c_2\left(1 + |x|^p + \int_{\R^d}|z|^p\mu(dz)\right) &\le g(x,\mu) \le c_2\left(1 + |x|^p + \int_{\R^d}|z|^p\mu(dz)\right), \\
-c_2\left(1 + |x|^p + \int_{\R^d}|z|^p\mu(dz) + |a|^{p'}\right) &\le f(t,x,\mu,a) \le c_2\left(1 + |x|^p + \int_{\R^d}|z|^p\mu(dz)\right) - c_3|a|^{p'}.
\end{align*}
\end{enumerate}
\end{assumption}
Examples under which Assumption {\bf A} holds will be discussed in Section \ref{se:counterexamples}.
\subsection{General objective} Ideally, we are interested in the following notion of strong MFG solution:

\begin{definition}[Strong MFG solution with strong control]
\label{def:MFG:strong:strong}
A \emph{strong MFG solution with strong control} 
and with initial condition $\lambda$ is a tuple $(\Omega,
(\F_t)_{t \in [0,T]},P,B,W,\mu,\alpha,X)$, where $(\Omega,(\F_t)_{t \in [0,T]},P)$ is a filtered probability space supporting $(B,W,\mu,\alpha,X)$ satisfying
\begin{enumerate}
\item $(\F_t)_{t \in [0,T]}$ is the $P$-complete filtration generated by the process $(X_0,B_t,W_t)_{t \in [0,T]}$. 
\item The processes 
$(B_{t})_{t \in [0,T]}$ and $(W_{t})_{t \in [0,T]}$ are independent $({\mathcal F}_{t})_{t \in [0,T]}$
Wiener processes of respective dimension $m_0$ and $m$, the processes $(\mu_t = \mu \circ \pi_t^{-1})_{t \in [0,T]}$ 
and $(X_{t})_{t \in [0,T]}$ are 
$(\F_t)_{t \in [0,T]}$-adapted processes (with values in 
${\mathcal P}^p(\R^d)$ and $\R^d$ respectively), and $P \circ X_0^{-1} = \lambda$.
\item $(\alpha_t)_{t \in [0,T]}$ is $(\F_t)_{t \in [0,T]}$-progressively measurable 
with values in $A$ and $\E\int_0^T|\alpha_t|^pdt < \infty$.
\item The state equation holds
\begin{align}
dX_t = b(t,X_t,\mu_t,\alpha_t)dt + \sigma(t,X_t,\mu_t)dW_t + \sigma_0(t,X_t,\mu_t)dB_t, \quad 
t \in [0,T].
 \label{def:SDE-strong-strong}
\end{align}
\item If $(\alpha'_t)_{t \in [0,T]}$ is another $(\F_t)_{t \in [0,T]}$-progressively measurable $A$-valued process
 satisfying $\E\int_0^T|\alpha'_t|^pdt < \infty$, and $X'$ is the unique strong solution of 
\[
dX'_t = b(t,X'_t,\mu_t,\alpha'_t)dt + \sigma(t,X'_t,\mu_t)dW_t + \sigma_0(t,X'_t,\mu_t)dB_t, \ X'_0 = X_0,
\]
then
\[
\E\biggl[\int_0^Tf(t,X_t,\mu_t,\alpha_t)dt + g(X_T,\mu_T)\biggr] \ge \E\biggl[\int_0^Tf(t,X'_t,\mu_t,\alpha'_t)dt + g(X'_T,\mu_T)\biggr].
\]
\item $P$-almost surely, $\mu(\cdot) = P(X \in \cdot | \  B)$. That is, $\mu$ is a version of the conditional law of $X$ given $B$.
\end{enumerate}
\end{definition}
Pay attention that
$E \int_{0}^T \vert \alpha_{t} \vert^{p'} dt$
is not required to be finite. Thanks to (A.5), there is no need. 
When  $E \int_{0}^T \vert \alpha_{t} \vert^{p'} dt=\infty$, the reward functional is well-defined and is equal to 
$-\infty$. 

Definition \ref{def:MFG:strong:strong} may be understood as follows. Points (1), (2) and (3) 
are somewhat technical requirements that fix the probabilistic set-up under which the MFG solution 
is defined. Given $\mu$ as in the definition, (4) and (5) postulate that 
$(X_{t})_{t \in [0,T]}$ is a solution of the stochastic optimal control problem driven 
by the reward functionals $f$ and $g$ in 
the random environment $\mu$. Condition (6) is a fixed point condition. It is an adaptation of the condition 
$\mu = P(X \in \cdot)$ used in the MFG literature to describe asymptotic Nash equilibria between 
interacting particles $X^1,\dots,X^N$ submitted to independent noises:
\begin{equation*}
dX_t^i = b(t,X_t^i,\bar{\mu}_t^N,\alpha_t^i)dt + \sigma(t,X_t^i,\bar{\mu}_t^N)dW_t^i, \quad
i=1,\dots,N, 
\end{equation*}
where $W^1,\dots,W^N$ are independent Wiener processes,
$\bar{\mu}^N_{t}$ is the empirical distribution of the $N$-tuple $(X^1_{t},\dots,X^N_{t})$
and $\alpha^1,\dots,\alpha^N$ are control processes.
In (6), the conditioning by $B$ reflects 
correlations between the particles when their dynamics are governed by a common noise:
\begin{equation*}
dX_t^i = b(t,X_t^i,\bar{\mu}_t^N,\alpha_t^i)dt + \sigma(t,X_t^i,\bar{\mu}_t^N)dW_t^i
+ \sigma_{0}(t,X_{t}^i,\bar{\mu}_t^N) dB_{t}, \quad
i=1,\dots,N, 
\end{equation*}
where $B,W^1,\dots,W^N$ are independent Wiener processes. 
Intuitively, conditioning in (6) follows from a conditional application of the law of large numbers; see 
\cite{carmona:delarue:4lyons} for an overview. In our definition, the equilibrium is called \textit{strong}
as it is entirely described by the common noise $B$.

\begin{remark}
The fact that $(X_{t})_{t \in [0,T]}$ is $(\F_t)_{t \in [0,T]}$-adapted and 
$(B_{t})_{t \in [0,T]}$ is an $(\F_t)_{t \in [0,T]}$-Wiener process in the above definition
implies, with (6), that
$\mu_t = P(X_t \in \cdot \ | B) = P(X_t \in \cdot \ | \sigma(B_s : s \le t))$ $P$ a.s. The filtration being complete,
$(\mu_t)_{t \in [0,T]}$ is automatically 
$(\F_t)_{t \in [0,T]}$-adapted (without requiring it in (2)).
Note also that $(\mu_{t})_{t \in [0,T]}$ has continuous trajectories (in $\P^p(\R^d)$) 
as $\mu$ is ${\mathcal P}^p(\C^d)$-valued.
\end{remark}

We will not be able to prove existence of such a solution under the general assumptions \ref{assumption:A}. It is not until Section \ref{se:uniqueness} that we find additional assumptions which do ensure the existence and uniqueness 
of a strong MFG solution (either in the sense of Definition \ref{def:MFG:strong:strong} or the following weaker Definitions \ref{def:MFG:strong:weak}). Assuming only \ref{assumption:A}, a general existence theorem will hold if we relax the notion of solution. As the first of two relaxations, the class of admissible controls will be enlarged to include what we call \emph{weak controls}. Weak controls are essentially $\P(A)$-valued processes rather than $A$-valued processes, which may be interpreted as a randomization of the control; moreover, weak controls are also allowed to be randomized externally to the given sources of randomness $(X_0,B,W)$. The first of such relaxations we investigate is the following:

\begin{definition}[Strong MFG solution with weak control]
\label{def:MFG:strong:weak}
A \emph{strong MFG solution with weak control} is a tuple $(\Omega,(\F_t)_{t \in [0,T]},P,B,W,\mu,\Lambda,X)$, where $(\Omega,(\F_t)_{t \in [0,T]},P)$ is a probability space with a complete filtration
supporting $(B,W,\mu,\Lambda,X)$ satisfying
\begin{enumerate}
\item The processes 
$(B_{t})_{t \in [0,T]}$ and $(W_{t})_{t \in [0,T]}$ are independent $({\mathcal F}_{t})_{t \in [0,T]}$
Wiener processes of respective dimension $m_0$ and $m$, the processes $(\mu_t = \mu \circ \pi_t^{-1})_{t \in [0,T]}$ 
and $(X_{t})_{t \in [0,T]}$ are 
$(\F_t)_{t \in [0,T]}$-adapted processes (with values in 
${\mathcal P}^p(\R^d)$ and $\R^d$ respectively) and $P \circ X_0^{-1} = \lambda$.
\item $(\Lambda_t)_{t \in [0,T]}$ is $(\F_t)_{t \in [0,T]}$-progressively measurable 
with values in $\P(A)$ and 
\[
\E\int_0^T \int_{A} |a|^p \Lambda_{t}(da) dt < \infty.
\]
\item The state equation holds \footnote{\label{footnote:sde} Throughout the paper, we avoid augmenting filtrations to be right-continuous, mostly because it could cause real problem in point (3) of Definition \ref{def:mfg:weak:weak}. The concerned reader is referred to \cite[Lemma 4.3.3]{stroockvaradhanbook} for a carefully discussion of stochastic integration without completeness or right-continuity of the filtration.}:
\begin{align*}
dX_t = \biggl\{ \int_Ab(t,X_t,\mu_t,a)\Lambda_t(da) \biggr\} dt + \sigma(t,X_t,\mu_t)dW_t + \sigma_0(t,X_t,\mu_t)dB_t.
\end{align*}
\item If $(\Omega',\F'_t,P')$ is another filtered probability space supporting processes $(B',W',\mu',\Lambda',X')$ satisfying (1-3) and $P \circ (B,\mu)^{-1} = P' \circ (B',\mu')^{-1}$, then
\[
\E\biggl[\int_0^T\int_Af(t,X_t,\mu_t,a)\Lambda_t(da)dt + g(X_T,\mu_T)\biggr] \ge \E\biggl[\int_0^T\int_Af(t,X'_t,\mu'_t,a)\Lambda'_t(da)dt + g(X'_T,\mu'_T)\biggr].
\]
\item $\mu$ is a version of the conditional law of $X$ given $B$.
\end{enumerate}
\end{definition}
Note that (1) and (5) are the same as (2) and (6) of Definition \ref{def:MFG:strong:strong}. Given a MFG solution in either of the above senses, we call the corresponding measure flow $(\mu_t)_{t \in [0,T]}$ an \emph{equilibrium}.

\subsection{Relaxed controls}
\label{subse:relaxed:controls}
We now specify the notion of relaxed controls. Recall that Assumption \ref{assumption:A} is in force at all times. Define $\V$ to be the set of measures $q$ on $[0,T] \times A$ satisfying both $q(\cdot \times A) = \text{Lebesgue}$
(that is the image of $q$ by the projection on $[0,T]$ is the Lebesgue measure on $[0,T]$)
 and
\[
\int_{[0,T] \times A}q(dt,da)|a|^p < \infty.
\]
An element of $\V$ is called a \emph{relaxed control}.
Any element $q \in {\mathcal V}$ may be rescaled into $q/T \in {\mathcal P}^p([0,T] \times A)$.
This permits to endow $\V$ with the $p$-Wasserstein metric, denoted by $\ell_{\V}$. 
It follows from results of \cite{jacodmemin-stable} that $\V$ is a Polish space (since $A$ is), and in fact if $A$ is compact then so is $\V$, and in this case $\ell_\V$ metrizes the topology of weak convergence. See Appendix \ref{ap:wasserstein} for some details about this space. 

Each $q \in \V$ may be identified with a measurable function $[0,T] \ni t \mapsto q_t \in \P^p(A)$, determined uniquely (up to a.e. equality) by $dtq_t(da) = q(dt,da)$. As in \cite[Lemma 3.8]{lacker-mfgcontrolledmartingaleproblems}, we can find a predictable version of 
$(q_t)_{t \in [0,T]}$ in the following sense. Let $\Lambda$ denote the identity map on $\V$, and let 
\begin{align}
\F^\Lambda_t := \sigma\left(\Lambda(C) : C \in \B([0,t] \times A)\right), \quad t \in [0,T]
\label{def:flambda}
\end{align}
Then, there exists an $(\F^{\Lambda}_{t})_{t \in [0,T]}$-predictable process $\overline{\Lambda} : [0,T] \times \V \rightarrow \P(A)$ such that, for each $q \in \V$, $\overline{\Lambda}(t,q) = q_t$ for almost every $t \in [0,T]$. In particular, $q = dt[\overline{\Lambda}(t,q)](da)$ for each $q \in \V$, and it is immediate that $\F^\Lambda_t = \sigma(\overline{\Lambda}(s,\cdot) : s \le t)$.
We will abuse notation somewhat by writing $\Lambda_t := \overline{\Lambda}(t,\cdot)$. Before we proceed, we first state a frequently useful version of a standard moment estimate for the state equation (4) in Definition \ref{def:MFG:strong:weak}.  

\begin{lemma} \label{le:stateestimate}
On some filtered probability space $(\Omega,(\F_t)_{t \in [0,T]},P)$, suppose $B$ and $W$ are independent $(\F_t)_{t \in [0,T]}$-Wiener processes, suppose $\mu$ is a $\P^p(\C^d)$-valued random variable such that 
$(\mu_t=\mu \circ \pi_{t}^{-1})$ is 
$(\F_t)_{t \in [0,T]}$-progressive, suppose $(\Lambda_t)_{t \in [0,T]}$ is an $(\F_t)_{t \in [0,T]}$-progressive $\P^p(A)$-valued process, and suppose $\xi$ is a $\F_0$-measurable random vector with law $\lambda$. Assume \ref{assumption:A} holds. Then there exists a unique solution $X$
of the state equation (4) in Definition \ref{def:MFG:strong:weak} with $X_{0}=\xi$ as initial condition.  

For each $\gamma \in [p,p']$, there exists a constant $c_4 > 0$, depending only on $\gamma$, $\lambda$, $T$, and the constant $c_1$ of (A.4) such that, 
\begin{align*}
\E\|X\|_T^\gamma &\le c_4\biggl(1 + \int_{\C^d}
\|z\|_T^\gamma \mu(dz) + \E\int_0^T \int_{A} \vert a \vert^\gamma \Lambda_{t}(da) dt\biggr).
\end{align*}
Moreover, if $P(X \in \cdot \ | \ B) = \mu$, then we have
\begin{align*}
\E \int_{\C^d} \|z\|_T^\gamma \mu(dz) = \E\|X\|_T^\gamma &\le c_4\biggl(1 + \E\int_0^T \int_{A} |a|^\gamma \Lambda_{t}(da) dt\biggr).
\end{align*}
\end{lemma}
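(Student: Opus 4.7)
The plan has three parts: strong existence and uniqueness of $X$, the a priori moment bound, and the reduction under the fixed point condition.

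For existence and uniqueness, I freeze $(\mu,\Lambda)$ and regard the SDE as an It\^o equation with random progressively measurable coefficients
$$\widetilde b(t,x,\omega) := \int_A b(t,x,\mu_t(\omega),a)\Lambda_t(\omega)(da), \quad \widetilde\sigma(t,x,\omega) := \sigma(t,x,\mu_t(\omega)), \quad \widetilde\sigma_0(t,x,\omega) := \sigma_0(t,x,\mu_t(\omega)).$$
Assumption (A.4) gives uniform-in-$(a,\omega)$ Lipschitz continuity in $x$ of all three, with $\widetilde b(\cdot,0)$ having an affine bound whose zero-order part is $L^p$-integrable in $(t,\omega)$ (thanks to the $p$-moment hypotheses on $\mu$ and $\Lambda$) and $\widetilde\sigma,\widetilde\sigma_0$ having sublinear growth since $p_\sigma \le 2$. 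A standard Picard iteration in $L^p(\Omega;\C^d)$, or any textbook strong-existence result for Lipschitz SDEs with random coefficients, then delivers the unique strong solution $X$.

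For the a priori bound, fix $\gamma \in [p,p']$, write the SDE in integral form, apply $\|\cdot\|_t^\gamma$, take expectations, and invoke Burkholder-Davis-Gundy on the stochastic integrals. The drift integrand is pointwise controlled by $c_1|X_s| + c_1[1 + M_p(\mu_s) + \int_A|a|\Lambda_s(da)]$, with $M_p(\mu_s) := (\int_{\R^d}|z|^p\mu_s(dz))^{1/p}$. Because $\gamma \ge p$, Jensen's inequality gives $M_p(\mu_s)^\gamma \le \int_{\R^d}|z|^\gamma\mu_s(dz)$. For the diffusion, the bound in (A.4) combined with $p_\sigma \le 2$ and $p_\sigma \le p \le \gamma$ lets me dominate $\bigl(\int_0^t(|\widetilde\sigma|^2+|\widetilde\sigma_0|^2)\,ds\bigr)^{\gamma/2}$ by a time integral of terms of the same form as the drift (when $\gamma<2$, the subadditivity inequality $(a+b)^{\gamma/2}\le a^{\gamma/2}+b^{\gamma/2}$ replaces Jensen's). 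Bounding $\int_{\R^d}|z|^\gamma\mu_s(dz) = \int_{\C^d}|z_s|^\gamma\mu(dz) \le \int_{\C^d}\|z\|_T^\gamma\mu(dz)$ and applying Gronwall's lemma to $t \mapsto \E\|X\|_t^\gamma$ then yields the first inequality.

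For the refined bound, the fixed point condition $\mu = P(X\in\cdot\,|\,B)$ gives, after taking expectations, $\E\int_{\R^d}|z|^\gamma\mu_s(dz) = \E|X_s|^\gamma \le \E\|X\|_s^\gamma$. The crucial observation is that this $\mu$-contribution enters \emph{inside} the time integral generated by the Gronwall setup, not only at $t=T$, so it merges with the existing $\int_0^t\E\|X\|_s^\gamma\,ds$ term rather than producing an untamed $\E\|X\|_T^\gamma$ on the right. This placement of the substitution is the only nonroutine step: had one crudely bounded $\int_{\R^d}|z|^\gamma\mu_s(dz)$ by $\int_{\C^d}\|z\|_T^\gamma\mu(dz)$ before integrating in $s$, the closure would fail. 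After this absorption, a final invocation of Gronwall removes the $\mu$-integral and delivers the second bound.
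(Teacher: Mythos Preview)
Your proof is correct and follows essentially the same route as the paper's: existence/uniqueness dismissed as standard via Lipschitz coefficients, then BDG plus the growth bounds of (A.4), Jensen to pass from the $p$-th to the $\gamma$-th moment of $\mu_s$, a case split on $\gamma \gtrless 2$ to handle the $\gamma/2$ power on the quadratic variation, and Gronwall. Your ``crucial observation'' for the second claim---that the $\mu$-term must be kept inside the time integral before invoking $\E\int|z|^\gamma\mu_s(dz)=\E|X_s|^\gamma$ so that it is absorbed by Gronwall rather than appearing untamed at level $T$---is exactly what the paper does as well.
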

\begin{proof}
Existence and uniqueness are standard. The Burkholder-Davis-Gundy inequality and Jensen's inequality yield a constant $C$ (depending only on $\gamma$, $\lambda$, $c_1$, and $T$, and which may then change from line to line) such that, if $\Sigma := \sigma\sigma^\top + \sigma_0\sigma_0^\top$, then
\begin{align*}
\E\|X\|_t^\gamma \le &C\E\left[|X_0|^\gamma + \int_0^tds\int_A\Lambda_s(da)|b(s,X_s,\mu_s,a)|^\gamma + \left(\int_0^tds|\Sigma(s,X_s,\mu_s)|\right)^{\gamma/2}\right] \\
	\le &C\E\biggl\{|X_0|^\gamma +
	c_1^\gamma
	 \int_0^tds
	 \biggl[ 1+ \|X\|_s^\gamma + \biggl(\int_{{\mathcal C}^d}
	 \|z\|_{s}^p \mu(dz) \biggr)^{\gamma/p} + \int_{A} |a|^\gamma \Lambda_s(da) 
	 \biggr]
	   \\
	&\quad\quad+ \biggl[c_{1}\int_0^tds \biggl(
	1 + \|X\|_s^{p_\sigma} + \biggl(\int_{{\mathcal C}^d}\|z\|_s^p\mu(dz)\biggr)^{p_\sigma/p}\biggr)\biggr]^{\gamma/2}\biggr\}
	 \\
	&\le C\E\biggl[1 + |X_0|^\gamma + \int_0^tds\biggl(
	1 + \|X\|_s^\gamma + \int_{{\mathcal C}^d}\|z\|_s^\gamma \mu(dz) + \int_A|a|^\gamma\Lambda_s(da)\biggr)\biggr]
\end{align*}
To pass from the second to the last line, we used the bound
$(\int\|z\|_s^p\mu(dz))^{\gamma/p} \le 
\int\|z\|_s^\gamma\mu(dz)$, which holds true since $\gamma \geq p$.
To bound $(\int\|z\|_s^p\mu(dz))^{p_{\sigma}/p}$ in the third line, we used the 
following argument.
If $\gamma \ge 2$, we can 
pass the power $\gamma/2$ inside 
the integral in time by means of
Jensen's inequality and then use the inequality $|x|^{p_\sigma\gamma/2} \le 1 + |x|^\gamma$, which holds since $p_\sigma \le 2$. If $\gamma \le 2$, we can use the inequality $|x|^{\gamma / 2} \le 1 + |x|$ followed by $|x|^{p_\sigma} \le 1 + |x|^\gamma$, which holds since $\gamma \ge p_\sigma$. The first claim follows now from Gronwall's inequality. 
If $P(X \in \cdot \ | \ B) = \mu$, then the above becomes
\begin{align*}
\E \int_{{\mathcal C}^d} \|z\|_t^\gamma \mu(dz) = \E\|X\|_t^\gamma \le C\E\left[|X|_0^\gamma + \int_0^t\left(1 
+ 2  \int_{{\mathcal C}^d}\|z\|_s^\gamma \mu(dz) + \int_A|a|^\gamma\Lambda_s(da)\right)ds\right].
\end{align*}
The second claim now also follows from Gronwall's inequality.
\end{proof}

\subsection{Discretized mean field games} \label{se:discretizedmfg}
Quite often, existence of a solution to a mean-field game without common noise 
is proved by means of Schauder's fixed point theorem. See
for instance \cite{cardaliaguet-mfgnotes,carmonadelarue-mfg}. Schauder's theorem is then applied 
on ${\mathcal P}^p(\C^d)$ (with $p=2$ in usual cases), 
for which compact subsets may be easily described. In the current setting, the presence of the 
common noise makes things much more complicated. Indeed, an equilibrium, denoted
by $\mu$ in Definitions \ref{def:MFG:strong:strong} and \ref{def:MFG:strong:weak},
is an element of the much larger space $[{\mathcal P}^p({\mathcal C}^d)]^{\C^{m_0}}$,
and the difficulty is to identify compact sets which could be stable under the transformations we consider.

\subsubsection{Set-up}
In this subsection we thus define a discretization of the mean field game
 for which equilibria only depend on a finite number
of random outcomes. Roughly speaking, equilibria can then be viewed as elements of the space    
$[{\mathcal P}^p({\mathcal C}^d)]^k$ for some integer $k \geq 1$, the compact sets of which may be described 
quite simply. Compactness will be much easier to come by when the state coefficients are bounded and the control space compact, and so we will begin the search for MFG solutions by working under the following assumptions:
\begin{assumption}{\textbf{B}} \label{assumption:B}
Assume that the following \ref{assumption:B}(1-5) hold for the rest of the subsection:
\begin{enumerate}
\item[(B.1)] $A$ is a compact metric space, and $(b,\sigma,\sigma_0)$ is uniformly bounded.
\item[(B.2)] $\lambda \in \P^{p'}(\R^d)$, and $p ' > p \ge 1$.
\item[(B.3)] The functions $b$, $\sigma$, $\sigma_0$, $f$, and $g$ of $(t,x,\mu,a)$ are jointly measurable and continuous in $(x,\mu,a)$ for each $t$.
\item[(B.4)] There exists $c_1 > 0$ such that, for all $(t,x,y,\mu,a) \in [0,T] \times \R^d \times \R^d \times \P^p(\R^d) \times A$,
\begin{align*}
|b(t,x,\mu,a) - b(t,y,\mu,a)| + |(\sigma,\sigma_{0})(t,x,\mu) - (\sigma,\sigma_{0})(t,y,\mu)| &\le c_1|x-y|.
\end{align*}
\item[(B.5)] There exists $c_2 > 0$ such that, for each $(t,x,\mu,a) \in [0,T] \times \R^d \times \P^p(\R^d) \times A$, 
\begin{align*}
|f(t,x,\mu,a)| +|g(x,\mu)|  &\le c_2\biggl(1 + |x|^p + \int_{\R^d}|z|^p\mu(dz)\biggr).
\end{align*}
\end{enumerate}
\end{assumption}

Note in particular that $\V$ is compact. Define
then the canonical spaces
\begin{align*}
\Omega_0 := \R^d \times \C^{m_0} \times \C^m, \quad
\Omega_f := \R^d \times \C^{m_0} \times \C^m \times \V \times \C^d.
\end{align*}
Let $\xi$, $B$, $W$, $\Lambda$, and $X$ denote the identity maps on $\R^d$, $\C^{m_0}$, $\C^m$, $\V$, and $\C^d$ respectively. With a
slight abuse of notation, we will also denote by $\xi$, $B$ and $W$
the projections from $\Omega_{0}$ onto $\R^d$, $\C^{m_{0}}$ and $\C^m$ respectively, and 
by $\xi$, $B$, $W$, $\Lambda$ and $X$
the projections from $\Omega_{f}$ onto $\R^d$, $\C^{m_{0}}$, $\C^m$, $\V$ and $\C^d$ respectively. 

The canonical processes $B$, $W$, and $X$ generate obvious natural filtrations on $\Omega_f$: 
$(\F^B_t)_{t \in [0,T]}$, $(\F^W_t)_{t \in [0,T]}$, and 
$(\F^X_t)_{t \in [0,T]}$. Recall the definition of $(\F^\Lambda_t)_{t \in [0,T]}$ on $\V$ from \eqref{def:flambda}. We will frequently work with filtrations generated by several canonical processes, such as 
$\F^{\xi,B,W}_t := \sigma(\xi,B_s,W_s : s \le t)$ defined on $\Omega_0$, and $\F^{\xi,B,W,\Lambda}_t = \F^{\xi,B,W}_t \otimes \F^{\Lambda}_t$ defined on $\Omega_0 \times \V$. When needed, we will use the same symbol $(\F_{t})_{t \in [0,T]}$ to denote the natural extension of a filtration 
$(\F_t)_{t \in [0,T]}$ on a space $\Omega$ to any product $\Omega \times \Omega'$, given by $(\F_t \otimes \{\emptyset, \Omega'\})_{t \in [0,T]}$. This permits to use 
$(\F^{\xi,B,W}_t)_{t \in [0,T]}$ for the filtration on $\Omega_0 \times \V$ generated by $(\xi,B,W)$, and it should be clear from context on which space the filtration is defined.

\subsubsection{Discretization procedure}
To define the discretized MFG problem, we discretize both time and the space of the common noise $B$. For each $n \ge 1$, let $t^n_i = i2^{-n}T$ for $i=0,\ldots,2^n$. For each positive integer $n$, we choose a partition $\pi^n := 
\{C^n_1,\ldots,C^n_{n}\}$ of $\R^{m_0}$ into $n$ measurable sets of strictly positive Lebesgue measure, such that $\pi^{n+1}$ is a refinement of $\pi^n$ for each $n$, and $\B(\R^{m_0}) = \sigma(\bigcup_{n=1}^\infty\pi^n)$.
For a given $n$, the 
time mesh $(t^n_{i})_{i=0,\ldots,2^n}$ and the spatial
partition $\pi^n$ yield a time-space grid along which we can discretize the trajectories in ${\mathcal C}^{m_{0}}$
(which is the space carrying the common noise $B$). Intuitively, the idea is to project 
 the increments of the trajectories between two consecutive times of the mesh 
 $(t^n_{i})_{i=0,\ldots,2^n}$ onto the spatial partition $\pi^n$.
For $1 \le k \le 2^n$ and $\underline{i} = (i_1,\ldots,i_k) \in \{1,\ldots,n\}^k$, 
we thus define $S^{n,k}_{\underline{i}}$ as  the set of trajectories with increments up until time $t_{k}$
in $C^n_{i_{1}},\dots,C^n_{i_{k}}$, that is:
\begin{equation*}
S^{n,k}_{\underline{i}} 
= \{\beta \in \C^{m_0} : \beta_{t^n_j} - \beta_{t^n_{j-1}} \in C^n_{i_j}, \ \forall j=1,\ldots,k\}.
\end{equation*}
Obviously, the $S^{n,k}_{\underline{i}}$'s, $\underline{i} \in  \{1,\ldots,n\}^k$, form a 
finite partition (of cardinal $n^k$) of ${\mathcal C}^{m_{0}}$, each $S^{n,k}_{\underline{i}}$ writing 
as a set of trajectories having the same discretization up until $t_{k}$ and having 
a strictly positive $\W^{m_{0}}$-measure. 
The collection of all the possible discretization classes up until $t_{k}$ thus reads:
\begin{equation*}
\Pi^n_k := \left\{S^{n,k}_{\underline{i}} : \underline{i} \in \{1,\ldots,n\}^k \right\}. 
\end{equation*}
When $k=0$, we let $\Pi^n_0 := \{\C^{m_0}\}$, since all the trajectories are in the same 
discretization class. 

For any $n \geq 0$, the filtration $(\sigma(\Pi^n_{k}))_{k=0,\dots,2^n}$
is the filtration generated by the discretization of the canonical process. Clearly, 
$\sigma(\Pi^n_{k}) \subset \F^B_{t^n_k}$ and $\sigma(\Pi^n_k) \subset \sigma(\Pi^{n+1}_k)$. 
For each $t \in [0,T]$, define
\[
\lfloor t \rfloor_n := \max\left\{t^n_k : 0 \le k \le 2^n, \ t^n_k \le t\right\}.
\]
Let $\Pi^n(t)$ equal $\Pi^n_k$, where $k$ is the largest integer such that $t^n_k \le t$, and let $\G^n_t := \sigma(\Pi^n(t))
= \G^n_{\lfloor t \rfloor_n}$. It is straightforward to verify that $(\G^n_t)_{t \in [0,T]}$ is a filtration (i.e. $\G^n_s \subset \G^n_t$ when $s < t$) for each $n$ and that
\begin{align*}
\F^B_t &= \sigma\biggl(\bigcup_{n=1}^\infty\G^n_t \biggr).
\end{align*}

\subsubsection{Measures parameterized by discretized trajectories}
\label{subsubse:Mn}
The purpose of the discretization procedure described right below is to reduce the complexity 
of the scenarios upon which an equilibrium $\mu$ depends in Definitions 
\ref{def:MFG:strong:strong} and \ref{def:MFG:strong:weak}. Roughly speaking, the strategy is to force
$\mu$ to depend only on the discretization of the canonical process $B$ on $\C^{m_{0}}$. 
A natural way to do so is to restrict (in some way) the analysis to functions $\mu : 
\Pi_{2^n}^n \rightarrow \P^p(\C^d)$ (instead of 
$\mu : 
\C^{m_{0}} \rightarrow \P^p(\C^d)$) 
or equivalently to functions 
$\mu : \C^{m_{0}} \rightarrow \P^p(\C^d)$ that are $\G^n_T$-measurable.
In addition, some adaptedness is needed. We thus let $\M_n$ denote the set of functions 
$\mu : \C^{m_0} \rightarrow \P^p(\C^d)$ that are $\G^n_T$-measurable
such that for each $t \in [0,T]$ and $C \in {\mathcal F}_{t}^X$
the map $\beta \mapsto [\mu(\beta)](C)$ is ${\mathcal G}^n_{t}$-measurable. In 
particular, the process $(\mu_{t} := \mu \circ \pi_{t}^{-1})_{t \in [0,T]}$ is 
$(\G^n_{t})_{t \in [0,T]}$-adapted and c\`adl\`ag (with values in 
$\P^p(\R^d)$).

 Note that any $\mu \in \M_n$ is constant on $S$ for each $S \in \Pi^n_{2^n}$
 in the sense that 
 $\beta \mapsto [\mu(\beta)](F)$ (which depends on the discretized trajectory) is constant on $S$ for each Borel subset $F$ of $\P^p(\C^d)$. Endow $\M_n$ with the topology of pointwise convergence, which of course is the same as the topology of uniform convergence since the common domain of each $\mu \in \M_n$ is effectively $\Pi^n_{2^n}$, which is finite. Since $\G^n_T = \sigma(\Pi^n_{2^n})$ is finite, the space $\M_n$ is homeomorphic to a closed subset of $\P^p(\C^d)^{|\Pi^n_{2^n}|}$. Hence, $\M_n$ is a metrizable closed convex subset of a locally convex topological vector space.

\subsubsection{Control problems} \label{subsubse:control} 
Control problems will be described in terms of measures on $\Omega_{0} \times 
{\mathcal V}$. Let
\begin{align}
\W_\lambda := \lambda \times \W^{m_0} \times \W^m \in \P^{p'}(\Omega_0) \label{def:wlambda}
\end{align}
denote the distribution of the given sources of randomness on $\Omega_{0}$; note that $p'$-integrability follows from the assumption $\lambda \in \P^{p'}(\R^d)$. The set of \emph{admissible control rules} $\A_f$ is defined to be the set of $Q \in \P(\Omega_0 \times \V)$ such that $B$ and $W$ are independent $(\F^{\xi,B,W,\Lambda}_t)_{t \in [0,T]}$-Wiener processes under $Q$ and $Q \circ (\xi,B,W)^{-1} = \W_\lambda$. Equivalently, $Q \in \P(\Omega_0 \times \V)$ is in $\A_f$ if 
$Q \circ (\xi,B,W)^{-1} = \W_\lambda$ and $(B_t-B_s,W_t-W_s)$ is $Q$-independent of $\F^{\xi,B,W,\Lambda}_s$ for each $0 \le s < t \le T$. Intuitively, this is just the set of ``reasonable'' joint laws of the control process with the given randomness. It is easy to check that $\A_f$ is closed in the topology of weak convergence. 

Given $\mu \in \M_n$ and $Q \in \A_f$, on the completion of the filtered probability space $(\Omega_0 \times \V, 
(\F^{\xi,B,W,\Lambda}_t)_{t \in [0,T]},Q)$ we may find a process $Y$ such that $(\xi,B,W,\Lambda,Y)$ satisfy the SDE 
\begin{align}
Y_t &= \xi + \int_0^tds\int_A\Lambda_s(da)b(s,Y_s,\mu_{s}(B),a) 
\nonumber \\
	&\hspace{5pt} 
	+ \int_0^t\sigma(s,Y_s,\mu_{s}(B))dW_s + \int_0^t\sigma_0(s,Y_s,\mu_{s}(B))dB_s. \label{def:SDE-discrete}
\end{align}
Define the law of the solution and the interpolated solution by
\begin{align*}
\RC_f(\mu,Q) &:= Q \circ (\xi,B,W,\Lambda,Y)^{-1}, \quad
\RC^n_f(\mu,Q) := Q \circ (\xi,B,W,\Lambda,\hat{Y}^n)^{-1},
\end{align*}
where, for an element $x \in \C^{d}$, $\hat{x}^n$ is the (delayed) linear interpolation of 
$x$ along the mesh $(t^n_{i})_{i=0,\dots,2^n}$:
\begin{align}
\hat{x}_{t}^n &= \frac{2^n}{T} \bigl( t - t^n_i \bigr) 
x_{t^n_i} + \frac{2^n}{T}\bigl( t^n_{i+1} - t \bigr) x_{t^n_{(i-1)^+}}, \quad \text{for} \ t \in [t^n_{i},t^n_{i+1}], \ i=0,\dots,2^n-1.  \label{eq:interpolation}
\end{align}
The delay ensures that $\hat{X}^n$ is $({\mathcal F}_{t}^X)_{t \in [0,T]}$-adapted.
By Lemma \ref{le:stateestimate} and compactness of $A$,
$\RC_f(\mu,Q)$ and $\RC^n_{f}(\mu,Q)$ are in $\P^p(\Omega_f)$.
Note that $\RC_f$ and $\RC^n_f$ are well-defined; by the uniqueness part in Lemma \ref{le:stateestimate}, 
$\RC_f(\mu,Q)$ is the unique element $P$ of $\P(\Omega_f)$ such that 
$P \circ (\xi,B,W,\mu,\Lambda)^{-1} = Q$ and such that the canonical processes verify the SDE \eqref{def:SDE-discrete} under $P$. Again, as in footnote \ref{footnote:sde} on page \pageref{footnote:sde}, it is no cause for concern that the $Q$-completion of the canonical filtration $(\F^{\xi,B,W,\Lambda}_t)_{t \in [0,T]}$ may fail to be right-continuous.

The objective of the discretized control problem is as follows. Define the reward functional $\Gamma : \P^p(\C^d) \times \V \times \C^d \rightarrow \R$ by
\begin{align}
\Gamma(\mu,q,x) := \int_0^Tdt\int_Aq_t(da)f(t,x_t,\mu_t,a) + g(x_T,\mu_T), \label{def:gamma}
\end{align}
and the expected reward functional $J_f : \M_n \times \P^p(\Omega_f) \rightarrow \R$ by
\begin{align*}
J_f(\mu,P) &:= \E^P\left[\Gamma(\mu(B),\Lambda,X)\right]. 
\end{align*}
For a given $\mu \in {\mathcal M}_{n}$, we are then dealing with the optimal control problem
(with random coefficients) consisting in maximizing 
 $J_{f}(\mu,P)$
over $P \in \RC_{f}^n(\mu,\A_f)$. The set of maximizers is given by 
\begin{align*}
\RC^{\star,n}_f(\mu) &:= \arg\max_{P \in \RC_f^n(\mu,\A_f)}J_f(\mu,P).
\end{align*}
The set $\RC^{\star,n}_f(\mu)$ represents the optimal controls for the $n^{\text{th}}$ discretization corresponding to $\mu$. 
A priori, it may be empty.

\subsubsection{Strong MFG solutions}
The main result of this section is the following theorem, which proves the existence of a strong MFG solution with weak control for our discretized mean field game.

\begin{theorem} \label{th:approxexistence}
For each $n$, there exist $\mu \in \M_n$ and $P \in \RC^{\star,n}_f(\mu,\A_f)$ such that
$\mu = P(\left.X \in \cdot \ \right| \G^n_T)$
($P(\left.X \in \cdot \ \right| \G^n_T)$ being seen as a map from ${\mathcal C}^{m_{0}}$ to ${\mathcal P}^p(\C^d)$, constant on each $S \in \Pi^n_{2^n}$.)
\end{theorem}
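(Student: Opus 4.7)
The proof will be an application of Kakutani's fixed point theorem to a suitable set-valued map acting on a compact convex subset of $\M_n$. The plan is to take $\Phi(\mu)$ to be the set of all conditional laws $P(\hat{Y}^n \in \cdot \, | \, \G^n_T)$ arising from optimizers $P \in \RC^{\star,n}_f(\mu)$, and to verify that $\Phi$ has a fixed point.

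First I would identify a compact convex invariant subset $K \subset \M_n$. Because $(b,\sigma,\sigma_0)$ are uniformly bounded under Assumption \ref{assumption:B} and $A$ is compact, a standard Burkholder--Davis--Gundy / Gronwall estimate of the type used in Lemma \ref{le:stateestimate} yields a constant $M$, depending only on the data, such that $\E^Q \|Y\|_T^{p'} \le M$ uniformly over $\mu \in \M_n$ and $Q \in \A_f$; hence $\E^{\W^{m_0}}\bigl[\int \|z\|_T^{p'}\mu'(B)(dz)\bigr] \le M$ for every $\mu' \in \Phi(\mu)$. Let $K$ be the subset of $\mu \in \M_n$ satisfying this bound; since each $\mu \in \M_n$ is determined by its finitely many values on $\Pi^n_{2^n}$ and each $\W^{m_0}(S)$ is strictly positive, the moment bound gives $p$-uniform integrability on each coordinate, making $K$ a compact (and manifestly convex) subset of $\M_n \simeq \P^p(\C^d)^{|\Pi^n_{2^n}|}$ (with the adaptedness constraint; tightness in $\C^d$ follows because $\hat Y^n$ is piecewise linear with slopes in $L^{p'}$).

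Next I would verify the hypotheses of Kakutani's theorem for $\Phi : K \to 2^K$. For nonemptiness of $\RC^{\star,n}_f(\mu)$, I would show that $\A_f$ is compact in $\P(\Omega_0 \times \V)$ (it is closed, and $\W_\lambda$ is fixed while $\V$ is compact since $A$ is), that $(\mu,Q) \mapsto \RC^n_f(\mu,Q)$ is continuous from $K \times \A_f$ into $\P^p(\Omega_f)$ using standard SDE stability in Wasserstein distance, and that $J_f$ is continuous on $K \times \P^p(\Omega_f)$ under (B.5); then the existence of a maximizer is immediate. Convexity of $\Phi(\mu)$ follows because $J_f(\mu,\cdot)$ is affine in $P$, so $\RC^{\star,n}_f(\mu)$ is convex, and $P \mapsto P(\hat Y^n \in \cdot \,|\, \G^n_T)$ is affine.

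The main obstacle, and the essential reason for introducing the discretization, is verifying that $\Phi$ has closed graph. If $\mu_k \to \mu$ in $K$ and $\mu'_k \in \Phi(\mu_k)$ with $\mu'_k \to \mu'$, choose corresponding $P_k \in \RC^{\star,n}_f(\mu_k)$; by compactness pass to a limit $P \in \RC^n_f(\mu,\A_f)$, and upper semicontinuity of the value function (Berge's maximum theorem, using joint continuity of $J_f$ and $\RC^n_f$) shows $P \in \RC^{\star,n}_f(\mu)$. The delicate point is showing $\mu'(\beta) = P(\hat Y^n \in \cdot \,|\, \G^n_T)(\beta)$, i.e., that conditional laws pass to the limit. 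Here the discretization is crucial: since $\G^n_T = \sigma(\Pi^n_{2^n})$ is generated by the finite partition $\Pi^n_{2^n}$ and each cell has strictly positive $\W^{m_0}$-measure, the conditional law on a cell $S$ is simply $P(\hat Y^n \in \cdot , B \in S)/P(B \in S)$, which is jointly continuous in $P$ at points where the marginal of $B$ is fixed at $\W^{m_0}$ (as it is for every $P \in \RC^n_f(\mu,\A_f)$). This is exactly the continuity-of-conditioning miracle highlighted in the introduction. Kakutani's theorem then delivers $\mu \in K$ with $\mu \in \Phi(\mu)$, which is the desired fixed point.
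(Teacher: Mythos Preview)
Your approach is exactly the paper's: Kakutani--Fan--Glicksberg applied to the set-valued map $\mu \mapsto \{P(X \in \cdot \,|\, \G^n_T) : P \in \RC^{\star,n}_f(\mu,\A_f)\}$, with Berge's theorem supplying upper hemicontinuity and the finiteness of $\G^n_T$ making the conditioning map continuous. The architecture is right, but two steps in your sketch do not go through as written.

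First, your compact invariant set $K$ is not compact. You define $K$ as those $\mu \in \M_n$ satisfying a $p'$-moment bound, then argue tightness ``because $\hat Y^n$ is piecewise linear with slopes in $L^{p'}$.'' But $K$ as defined contains measures that are \emph{not} laws of any $\hat Y^n$; a moment bound on $\|z\|_T^{p'}$ by itself gives no control on the modulus of continuity, so tightness fails. The paper fixes this by building the invariant set more carefully: it first defines $\Q_1 \subset \P^p(\C^d)$ via a submartingale condition (which gives tightness by the Stroock--Varadhan criterion) together with the moment bound, then pushes forward under $x \mapsto \hat x^n$, and finally applies the conditioning map $G$. An equally valid fix in your spirit would be to add to the definition of $K$ the constraint that $\mu$ be supported on piecewise-linear paths on the mesh $(t^n_i)$; the moment bound on slopes then does give tightness.

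Second, your convexity argument is incomplete. You write ``$J_f(\mu,\cdot)$ is affine in $P$, so $\RC^{\star,n}_f(\mu)$ is convex,'' but affinity of the objective only yields a convex $\arg\max$ if the feasible set $\RC^n_f(\mu,\A_f)$ is already convex. This is not automatic: $\RC^n_f(\mu,\A_f)$ is the image of $\A_f$ under $Q \mapsto \RC^n_f(\mu,Q)$, and images of convex sets need not be convex. The paper devotes its Fourth Step to this, showing that $c\,\RC_f(\mu,Q^1) + (1-c)\,\RC_f(\mu,Q^2) = \RC_f(\mu, cQ^1+(1-c)Q^2)$ by uniqueness of the SDE solution, so the map is affine and the image of the convex set $\A_f$ is convex. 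This is where the strong uniqueness of the state SDE (Lipschitz drift and diffusion) is actually used.
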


\begin{proof}
A MFG equilibrium may be viewed as a fixed point of a set-valued function. Defining the set-valued map $F : \M_n \rightarrow 2^{\M_n}$ (where $2^{\M_{n}}$ is seen as the collection of subsets of $\M_{n}$) by
\[
F(\mu) := \left\{P\left(\left.X \in \cdot \ \right| \G^n_T\right) : P \in \RC^{\star,n}_f(\mu,\A_f)\right\},
\]
the point is indeed to prove that $F$ admits a fixed point, that is a point $\mu \in F(\mu)$. 
Since the unique event in $\G^n_{T}$ of null probability under $P$ is the empty set, we notice 
that $G(P):= P(X \in \cdot \ | \G^n_T)$ is uniquely defined for each $P \in \P^p(\Omega_{f})$.
Let $\P^p_{f}$ denote those elements $P$ of $\P^p(\Omega_{f})$ for which $P \circ (\xi,B,W,\Lambda)^{-1}$ is admissible, that is $\P^p_{f} := \{ P \in \P^p(\Omega_{f}): P \circ (\xi,B,W,\Lambda)^{-1} \in \A_{f}\}$.
For $P \in \P^p_{f}$, $G(P)$ is given by
\begin{equation}
\label{eq:G:discrete}
G(P) : \C^{m_{0}} \ni \beta \mapsto \sum_{S \in \Pi^{n}_{2^n}} P(X \in \cdot \ \vert B \in S) 1_{S}(\beta)=
\sum_{S \in \Pi^{n}_{2^n}} \frac{P(\{X \in \cdot\} \cap  \{B \in S\})}{\W^{m_{0}}(S)} 1_{S}(\beta).
\end{equation}
The very first step is then to check that $F(\mu) \subset {\mathcal M}_{n}$ for each $\mu \in {\mathcal M}_{n}$. 
The above formula shows that, for $P \in \P^p_{f}$,
 $G(P)$ reads as a $\G^n_{T}$-measurable function 
from $\C^{m_{0}}$ to ${\mathcal P}^p(\C^d)$. To prove that $G(P) \in {\mathcal M}_{n}$, 
it suffices to check the adaptedness condition in the definition of ${\mathcal M}_{n}$ (see 
Paragraph \ref{subsubse:Mn}).
For our purpose, we can restrict the proof to the case when 
$X$ is $P$ a.s. piecewise affine as in \eqref{eq:interpolation}. For  each $t \in [0,T]$ and $C \in {\mathcal F}_{t}^X$, we have that 
$1_{C}(X) = 1_{C'}(X)$ $P$ a.s. for some $C' \in  {\mathcal F}_{\lfloor t \rfloor_{n}}^X$. 
Now,
$
\G^n_T = \G^n_{\lfloor t \rfloor_n} \vee \mathcal{H}$,  where 
$\mathcal{H} \subset \sigma(B_s-B_{\lfloor t \rfloor_n} : s \in [{\lfloor t \rfloor_n},T])$. Since 
$\H$ is $P$-independent of $\F^X_{\lfloor t \rfloor_n} \vee \G^n_{\lfloor t \rfloor_n}$, 
we deduce that, $P$ a.s.,
$P(X \in C \ | \G^n_T)=
P(X \in C'  \ | \G^n_{\lfloor t \rfloor_n})$. 
Since the unique event in $\G^n_{T}$ of null probability under $P$ is the empty set, we deduce that
the process $(P(X_{\lfloor t \rfloor_n} \in \cdot \ | \G^n_T))_{t \in [0,T]}$ is 
$(\G^n_{t})_{t \in [0,T]}$-adapted. This shows that
$G(P) \in \M_{n}$ and thus 
$F(\mu) \subset \M_{n}$. 

We will achieve the proof by verifying the hypotheses of the Kakutani-Fan-Glicksberg fixed point theorem
for set-valued functions \cite[Corollary 17.55]{aliprantisborder}. Namely, we will show that $F$ is upper hemicontinuous with nonempty compact convex values, and we will find a compact convex subset $\Q \subset \M_{n}$ such that $F(\mu) \subset \Q$ for each $\mu \in \Q$. 

\subsubsection*{First step: Continuity of set-valued functions}
For the necessary background on set-valued analysis the reader is referred to \cite[Chapter 17]{aliprantisborder}. For this paragraph, fix two metric spaces $E$ and $F$. A set valued function $h : E \rightarrow 2^F$ is \emph{lower hemicontinuous} if, whenever $x_n \rightarrow x$ in $E$ and $y \in h(x)$, there exists $y_{n_k} \in h(x_{n_k})$ such that $y_{n_k} \rightarrow y$. If $h(x)$ is closed for each $x \in E$ then $h$ is called \emph{upper hemicontinuous} if, whenever $x_n \rightarrow x$ in $E$ and $y_n \in h(x_n)$ for each $n$, the sequence $(y_n)$ has a limit point in $h(x)$. We say $h$ is \emph{continuous} if it is both upper hemicontinuous and lower hemicontinuous. If $h(x)$ is closed for each $x \in E$ and $F$ is compact, then $h$ is upper hemicontinuous if and only if its graph $\{(x,y) : x \in E, \ y \in h(x)\}$ is closed.

First, we check the continuity of the function
\[
\P^p_{f} \ni P \mapsto P\left(\left. X \in \cdot \ \right| B \in S\right) \in \P^p(\C^d), \text{ for } 
S \in \Pi^n_{2^n}.
\] 
This is straightforward, thanks to the finiteness of the conditioning $\sigma$-field. Let $\phi : \C^d \rightarrow \R$ be continuous with $|\phi(x)| \le c(1 + 
\|x\|_{T}^p)$ for all $x \in \C^d$, for some $c > 0$. Proposition 
\ref{pr:wasserstein}(3) in Appendix says that it is enough to prove that 
$\E^{P_{k}}[\phi(X) | B \in S] \rightarrow \E^{P}[\phi(X) | B \in S]$ whenever
$P_{k} \rightarrow \P$ in $\P^p(\Omega_f)$. This follows from
Lemma \ref{le:componentwise}, which implies that the following real-valued function is continuous:
\[
\P^p(\Omega_f) \ni P \mapsto \E^P\left[\left.\phi(X) \right| B \in S\right] = \left. \E^P\left[\phi(X) 1_S(B) \right] \right/ \ \W^{m_0}(S).
\]
Basically, Lemma \ref{le:componentwise} handles the discontinuity of the indicator function $1_{S}$
together with the fact that $\phi$ is not bounded.
It follows that the function $G : \P^p_f \rightarrow \M_n$ given by
\eqref{eq:G:discrete} is continuous. The set-valued function $F$ is simply the composition of $G$ with
the set-valued function $\mu \mapsto \RC^{\star,n}_f(\mu,\A_f)$.
Therefore, to prove that $F$ is upper hemicontinuous, it is sufficient to prove that 
$\mu \mapsto \RC^{\star,n}_f(\mu,\A_f)$ is upper hemicontinuous. 

\subsubsection*{Second Step: Analysis of the mapping: $\mu \mapsto \RC^{n}_f(\mu,\A_f)$}
Following the first step, the purpose of the second step is to prove continuity of the set-valued function 
\begin{equation*}
\M_n \ni \mu \mapsto \RC_f^n(\mu,\A_f) := \left\{\RC_f^n(\mu,Q) : Q \in \A_f\right\} \in 2^{\P^p(\Omega_f)}
\end{equation*}
Since the map $\C^d \ni x \mapsto \hat{x}^n \in \C^d$ is continuous (see \eqref{eq:interpolation}), it suffices to prove 
continuity with $\RC_f^n$ replaced by $\RC_f$. To do so, we prove first that $\RC_f(\M_n,\A_f)$ is relatively compact
by showing that each of the sets of marginal measures is relatively compact; see Lemma \ref{le:productrelcompactness}. Clearly $\{P \circ (\xi,B,W)^{-1} : P \in \RC_f(\M_n,\A_f)\} = \{\W_\lambda\}$ is compact in $\P^p(\Omega_0)$. Since $A$ is compact, so is $\V$, and thus $\{P \circ \Lambda^{-1} : P \in \RC_f(\M_n,\A_f)\}$ is relatively compact in $\P^p(\V)$. Since $b$, $\sigma$, and $\sigma_0$ are bounded, Aldous' criterion (see Proposition \ref{pr:itocompact} for details) shows that $\{P \circ X^{-1} : \RC_f(\M_n,\A_f)\}$ is relatively compact in $\P^p(\C^d)$.

Continuity of the set-valued function $\RC_f(\cdot,\A_f)$ will follow from continuity of the single-valued function $\RC_f$. Since the range is relatively compact, it suffices to show that the graph of $\RC_f$ is closed. Let $(\mu_k,Q_k) \rightarrow (\mu,Q)$ in $\M_n \times \A_f$ and $P_k := \RC_f(\mu_k,Q_k) \rightarrow P$ in $\P^p(\Omega_f)$. It is clear that
\[
P \circ (\xi,B,W,\Lambda)^{-1} = \lim_{k\rightarrow\infty}P_k \circ (\xi,B,W,\Lambda)^{-1} = \lim_{k\rightarrow\infty}Q_k = Q.
\]
It follows from the results of Kurtz and Protter \cite{kurtzprotter-weakconvergence} that the state SDE 
\eqref{def:SDE-discrete} holds under the limiting measure $P$, since it holds under each $P_k$. 
Since $\RC_f(\mu,Q)$ is the unique law on $\Omega_{f}$ under which $(\xi,B,W,\Lambda)$ has law
$Q$ and $(\xi,B,W,\Lambda,X)$ solves \eqref{def:SDE-discrete}, we deduce that 
$P = \RC_{f}(\mu,Q)$. We finally conclude that $\RC_{f}(\cdot,\A_{f})$ and thus $\RC_{f}^n(\cdot,\A_{f})$ are continuous. 

\subsubsection*{Third Step: Analysis of the mapping: $\mu \mapsto \RC^{\star,n}_f(\mu,\A_f)$}
As a by-product of the previous analysis, we notice that, for each $\mu \in \M_{n}$, 
$\RC_{f}(\mu,\A_{f})$ is closed and relatively compact and thus compact. By continuity of the 
map $\C^d \ni x \mapsto \hat{x}^n \in \C^d$ (see \eqref{eq:interpolation}), $\RC_{f}^n(\mu,\A_{f})$
is also compact. 

Since $f$ and $g$ are continuous in $(x,\mu,a)$ and have $p$-order growth, 
it can be checked that the reward functional $\Gamma$ is continuous (although quite elementary, the proof is given in 
Appendix, see
Lemma \ref{le:usc}).
This implies that the expected reward functional 
$$\M_n \times \P^p(\Omega_f) \ni (\mu,P) \mapsto J_f(\mu,P) \in \R$$
is also continuous. If $\Gamma$ is bounded, continuity follows
from the fact that $(\mu_{k},P_{k}) \rightarrow (\mu,P)$ implies 
$P_{k} \circ (\mu_{k}(B),\Lambda,X)^{-1} \rightarrow 
P \circ(\mu(B),\Lambda,X)^{-1}$. In the general case when 
$\Gamma$
has $p$-order growth, it follows from Lemma \ref{le:componentwise}.

By compactness of $\RC_{f}^n(\mu,A_{f})$ and by continuity of $J_{f}$, $\RC_{f}^{\star,n}(\mu,A_{f})$ is nonempty and compact. 
Moreover, from a well known theorem of Berge \cite[Theorem 17.31]{aliprantisborder},  
the set-valued function $\RC^{\star,n}_f : \M_n \rightarrow 2^{\P(\Omega_f)}$ is upper hemicontinuous. 

\subsubsection*{Fourth step: Convexity of $\RC^{\star,n}_{f}(\mu,\A_{f})$}
We now prove that, for each $\mu \in {\mathcal M}_{n}$,
$\RC_f^n(\mu,\A_f)$ is convex. By linearity of the 
map $\C^d \ni x \mapsto \hat{x}^n \in \C^d$ (see \eqref{eq:interpolation}), 
it is sufficient to prove that $\RC_f(\mu,\A_f)$ is convex.
To this end, we observe first that $\A_{f}$ is convex. Given 
$Q_{i}$, $i=1,2$, in $\A_{f}$, and $c \in (0,1)$, we notice that 
$(B,W)$ is a Wiener process with respect to 
$({\mathcal F}^{\xi,B,W,\Lambda}_{t})_{t \in [0,T]}$
under $cP^1 + (1-c)P^2$, where 
$P^i : =\RC_{f}(\mu,Q^i)$ for $i=1,2$. (Use the fact that $(B,W)$ is a Wiener process under both 
$P^1$ and $P^2$.) Moreover, the state equation holds under $cP^1+(1-c)P^2$. Since  
$(cP^1+(1-c)P^2) \circ (\xi,B,W,\Lambda)^{-1}=c Q^1+(1-c) Q^2$, we deduce that 
$cP^1+(1-c) P^2$ is the unique probability on $\Omega_{f}$ under which 
$(\xi,B,W,\Lambda)$ has law $c Q^1+(1-c)Q^2$ and 
$(\xi,B,W,\Lambda,X)$ solves the state equation. This proves that 
$cP^1 + (1-c)P^2 = \RC_f(\mu,cQ^1 + (1-c)Q^2)$.

By linearity of the map $P \mapsto J_f(\mu,P)$, we deduce that 
the set-valued function $\RC^{\star,n}_f : \M_n \rightarrow 2^{\P(\Omega_f)}$ has nonempty convex values. 
(Non-emptiness follows from the previous step.) 

\subsubsection*{Conclusion}

Finally, we place ourselves in a convex compact subset of $\M_n$, by first finding a convex compact set $\Q_0 \subset \P^p(\C^d)$ containing $\{P \circ X^{-1} : P \in \RC_f^n(\M_n,\A_f) \}$. To this end, note that the boundedness of $(b,\sigma,\sigma_0)$ of assumption (B.1) implies that for each smooth $\phi : \R^d \rightarrow \R$ with compact support,
\[
C_\phi := \sup_{t,x,\mu,a}\Bigl|b(t,x,\mu,a)^\top D\phi(x) + \frac{1}{2}\text{Tr}\left[(\sigma\sigma^\top+\sigma^{\vphantom{\top}}_0\sigma_0^\top)(t,x,\mu)D^2\phi(x)\right]\Bigr| < \infty,
\]
where $D$ and $D^2$ denote gradient and Hessian, respectively. Following Lemma \ref{le:stateestimate} and again using boundedness of $(b,\sigma,\sigma_0)$, it is standard to show that
\[
M := \sup\left\{\E^P\|X\|^{p'}_T : P \in \RC_f^n(\M_n,\A_f)\right\} < \infty.
\]
Now, define $\Q_1$ to be the set of $P \in \P^p(\C^d)$ satisfying
\begin{enumerate}
\item $P \circ X_0^{-1} = \lambda$, 
\item $\E^P\|X\|^{p'}_T \le M$,
\item for each nonnegative smooth $\phi : \R^d \rightarrow \R$ with compact support, the process $\phi(X_t) + C_\phi t$ is a $P$-submartingale,
\end{enumerate}
It is clear that $\Q_1$ is convex and contains  $\{P \circ X^{-1} : P \in \RC_f^n(\M_n,\A_f) \}$. Using a well known tightness criterion of Stroock and Varadhan \cite[Theorem 1.4.6]{stroockvaradhanbook}, conditions (1) and (3) together imply that $\Q_1$ is tight, and the $p'$-moment bound of (2) then ensures that it is relatively compact in $\P^p(\C^d)$ (see Proposition \ref{pr:wasserstein}). It is straightforward to check that $\Q_1$ is in fact closed, and thus it is compact. Next, define
\[
\Q_2 := \left\{P \circ (\hat{X}^n)^{-1} : P \in \Q_1\right\} \subset \P^p(\C^d),
\]
and note that $\Q_2$ is also convex and compact, since $x \mapsto \hat{x}^n$ is continuous and linear.

Recalling the definition of $\P^p_f$ from the first step, let
\[
\Q_3 := \left\{P \in \P^p_f : \ P \circ X^{-1} \in \Q_2\right\} =\left\{P \in \P^p(\Omega_f) : P \circ (\xi,B,W,\Lambda)^{-1} \in \A_f, \ P \circ X^{-1} \in \Q_2\right\}.
\]
It is easily checked that $\A_f$ is a compact set: closedness is straightforward, and, as in the second step, $\A_f$ is relatively compact since $A$ is compact and the $(\xi,B,W)$-marginal is fixed. It follows from compactness of $\A_f$ and $\Q_2$ that $\Q_3$ is compact (see Lemma \ref{le:productrelcompactness}). Similarly, it follows from convexity of $\A_f$ and $\Q_2$ that $\Q_3$ is convex.

Finally, define $\Q := G(\Q_3)$. Note that $\Q \subset \M_n$, since we saw at the beginning of the proof that indeed $G(P) \in \M_n$ whenever $P \in \P^p_f$ satisfies $P(X = \hat{X}^n)=1$. As emphasized by \eqref{eq:G:discrete}, $G$ is linear. Hence, $\Q$ is convex and compact since $\Q_3$ is. Moreover, for each $\mu \in \M_n$, $F(\mu) = G(\RC^{\star,n}_f(\mu,\A_f))$ is convex and compact, since $\RC^{\star,n}_f(\mu,\A_f)$ is convex and compact (see the third and fourth steps). Since $F(\mu) \subset \Q$ for each $\mu \in \Q$, the proof is complete.
\end{proof}

\section{Weak limits of discretized MFG} 
\label{se:weaksolutions}
We now aim at passing to the limit in the discretized MFG as the time-space grid is refined, the limit being taken in the weak sense. To do so, we show that any sequence of solutions of the discretized MFG is relatively compact, and we characterize the limits. 
This requires a lot of precaution, the main reason being that measurability properties are not preserved under weak limits. In particular, we cannot generally ensure 
that in the limit, the conditional measure $\mu$ remains $B$-measurable in the limit. This motivates the new notion of \textit{weak} MFG solution in the spirit of weak solutions to standard stochastic differential equations. We will thus end up with \textit{weak solutions with weak controls}.  
Assumption \ref{assumption:A} holds throughout the section.

\subsection{Weak MFG solution with weak control}

Since the conditional measure $\mu$ is no longer expected to be measurable with respect to 
$B$, we need another space for it. One of the main idea in the sequel is 
to enlarge the space supporting $\mu$. Namely, instead of considering $\mu$ as the
conditional distribution of $X$ given some $\sigma$-field, we will see $\mu$ as the conditional distribution of 
the whole $(W,\Lambda,X)$. This will allow us to describe in a complete way the correlations between the different processes. 
In other words, $\mu$ will be viewed as an element of ${\mathcal P}^p(\X)$, with 
$\X := \C^m \times \V \times \C^d$, and with $\mu^x := \mu(\C^m \times \V \times \cdot)$ denoting the $\C^d$-marginal.

This brings us to the following definition of a weak MFG solution, the term \textit{weak} referring to the fact 
that the conditional distribution $\mu$ may not be adapted to the noise $B$:

\begin{definition}[Weak MFG solution with weak control]
\label{def:mfg:weak:weak}
A \emph{weak MFG solution with weak control} (or simply a \emph{weak MFG solution})
 with initial condition $\lambda$ is a tuple $(\Omega,
(\F_t)_{t \in [0,T]},P,B,W,\mu,\Lambda,X)$, where $(\Omega,(\F_t)_{t \in [0,T]},P)$ is a probability space with a complete filtration
supporting $(B,W,\mu,\Lambda,X)$ satisfying
\begin{enumerate}
\item The processes 
$(B_{t})_{t \in [0,T]}$ and $(W_{t})_{t \in [0,T]}$ are independent $({\mathcal F}_{t})_{t \in [0,T]}$
Wiener processes of respective dimension $m_0$ and $m$, the process $(X_{t})_{t \in [0,T]}$ is 
$(\F_t)_{t \in [0,T]}$-adapted with values in 
$\R^d$, and $P \circ X_0^{-1} = \lambda$. Moreover, $\mu$ is a random element of $\P^p(\X)$ such that $\mu(C)$ is $\F_t$-measurable for each $C \in \F^{W,\Lambda,X}_t$ and $t \in [0,T]$.
\item $X_0$, $W$, and $(B,\mu)$ are independent.
\item $(\Lambda_t)_{t \in [0,T]}$ is $(\F_t)_{t \in [0,T]}$-progressively measurable with values in $\P(A)$ and 
\[
\E\int_0^T \int_{A} |a|^p \Lambda_{t}(da) dt < \infty.
\]
Moreover, $\sigma(\Lambda_s : s \le t)$ is conditionally independent of $\F^{X_0,B,W,\mu}_T$ given $\F^{X_0,B,W,\mu}_t$ for each $t \in [0,T]$, where
\[
\F^{X_0,B,W,\mu}_t = \sigma(X_0,B_s,W_s : s \le t) \vee \sigma\left(\mu(C) : C \in \F^{W,\Lambda,X}_t\right).
\]
\item The state equation holds:
\begin{align}
dX_t = \int_Ab(t,X_t,\mu^x_t,a)\Lambda_t(da)dt + \sigma(t,X_t,\mu^x_t)dW_t + \sigma_0(t,X_t,\mu^x_t)dB_t. \label{def:relaxedSDE-weak}
\end{align}
\item If $(\Omega',(\F'_t)_{t \in [0,T]},P')$ is another filtered probability space supporting processes $(B',W',\nu,\Lambda',X')$ satisfying (1-4) and $P \circ (X_0,B,W,\mu)^{-1} = P' \circ (X_0',B',W',\nu)^{-1}$, then
\[
\E^P\left[\Gamma(\mu^x,\Lambda,X)\right] \ge \E^{P'}\left[\Gamma(\nu^x,\Lambda',X')\right].
\]
where $\Gamma$ was defined in \eqref{def:gamma}.
\item $\mu = P((W,\Lambda,X) \in \cdot \ | \ B,\mu)$ a.s. That is $\mu$ is a version of the conditional law of $(W,\Lambda,X)$ given $(B,\mu)$.
\end{enumerate}
If there exists an $A$-valued process $(\alpha_t)_{t \in [0,T]}$ such that $P(\Lambda_t = \delta_{\alpha_t} \ a.e. \ t) = 1$, then we say the tuple is a \emph{weak MFG solution with weak strict control}.
It is said to be a \emph{weak MFG solution with strong control} if the 
process $(\alpha_{t})_{t \in [0,T]}$ is progressive with respect to the $P$-completion of $({\mathcal F}_{t}^{X_0,B,W,\mu})_{t \in [0,T]}$. 
\end{definition}

A few comments regarding this definition are in order. 
The MFG solution is strong (see  
Definitions \ref{def:MFG:strong:strong} and \ref{def:MFG:strong:weak}),
if $\mu$ is $B$-measurable, and it is weak otherwise. Similarly, whether or not $\mu$ is $B$-measurable, the control 
is weak if it is not progressively measurable with respect to the completion of $(\F^{X_0,B,W,\mu}_t)_{t \in [0,T]}$. Note finally that assumption (6) in the definition of weak MFG solution with weak control ensures that $\mu^x_t$ is $\F_t$-adapted, as will be seen in Remark \ref{re:tmarginals}.

Since this notion of ``weak control'' is unusual, especially the conditional independence requirement in (3), we offer the following interpretation. An agent has full information, in the sense that he observes (in an adapted fashion) the initial state $X_0$, the noises $B$ and $W$, and also the distribution $\mu$ of the (infinity of) other agents' states, controls, and noises. That is, the agent has access to $\F^{X_0,B,W,\mu}_t$ at time $t$. Controls are allowed to be randomized externally to these observations, but such a randomization must be \emph{conditionally independent of future information given current information}. This constraint will be called \emph{compatibility}.

The main result of this section is:

\begin{theorem} \label{th:existence}
Under assumption \ref{assumption:A}, there exists a weak MFG solution with weak control that 
satisfies (with the notation of 
Definition \ref{def:mfg:weak:weak})
 $\E\int_0^T \int_{A} |a|^{p'} \Lambda_{t}(da) dt < \infty$.
\end{theorem}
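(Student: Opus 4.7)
The plan is a two-stage approximation. First, under the bounded/compact assumption \ref{assumption:B}, I would pass to the limit in the discretized MFGs produced by Theorem \ref{th:approxexistence}. Second, under the general assumption \ref{assumption:A}, I would truncate both the control space and the coefficients to reduce matters to the first stage, then use the coercivity of $f$ in the control (assumption (A.5)) to extract uniform $L^{p'}$-bounds as the truncation is relaxed.

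\textbf{Tightness and identification of the limit under \ref{assumption:B}.} Let $(\mu_n, P_n)_{n \ge 1}$ be the pairs furnished by Theorem \ref{th:approxexistence}, with $P_n \in \RC^{\star,n}_f(\mu_n,\A_f)$ and $\mu_n = P_n(X \in \cdot \mid \G^n_T)$. To match the enlarged state space required by Definition \ref{def:mfg:weak:weak}, I would replace $\mu_n$ by the richer random measure $\nu_n := P_n((W,\Lambda,X) \in \cdot \mid B, \mu_n) \in \P^p(\X)$, and consider the joint law $\widetilde P_n$ of $(X_0, B, W, \nu_n, \Lambda, X)$ on the product space $\R^d \times \C^{m_0} \times \C^m \times \P^p(\X) \times \V \times \C^d$. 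Tightness of $(\widetilde P_n)$ is immediate for the $X_0, B, W$ marginals (fixed) and for $\Lambda$ (which lives in the compact space $\V$, by B.1); the state $X$ is tight via Aldous's criterion using the uniform boundedness of $(b,\sigma,\sigma_0)$; and $\nu_n$ is tight in $\P^p(\X)$ by the moment estimates inherited from the other marginals. Along a weakly convergent subsequence $\widetilde P_{n_k} \to \widetilde P$, I would then verify the six items of Definition \ref{def:mfg:weak:weak}: the Wiener property (1) and the independence in (2) are closed under weak convergence via the martingale characterization; the state equation (4) passes to the limit by the Kurtz--Protter stability theorem (already used in the proof of Theorem \ref{th:approxexistence}); the compatibility condition (3) can be recast as a martingale identity and therefore survives weak limits; and the fixed point (6) holds essentially tautologically because $\nu_n$ was engineered into the joint law precisely as the conditional law it is required to be, bypassing the usual failure of continuity of conditioning.

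\textbf{The main obstacle: optimality.} The delicate step is verifying (5). Given any alternative tuple $(X_0',B',W',\nu',\Lambda',X')$ satisfying (1)--(4) and sharing the $(X_0, B, \nu)$-marginal with $\widetilde P$, I would construct admissible alternatives on the $n$-th discretized space whose rewards approximate that of the alternative, invoke the optimality of $P_n$, and pass to the limit using continuity of $\Gamma$ (the standard growth-based continuity lemma of the paper). The non-trivial part is the lifting: one must produce control processes on the pre-limit spaces that preserve adaptedness, the compatibility in (3), and the joint distribution with $\nu_n$. This is precisely why the enlarged object $\nu \in \P^p(\X)$ is used rather than a simple $\P^p(\C^d)$-valued flow, as the alternative control must be compared against the \emph{same} environment $\nu$ and its correlation with $(W, \Lambda, X)$ must be tracked.

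\textbf{Removing Assumption B.} For $R > 0$, let $(b_R, \sigma_R, \sigma_{0,R}, f_R, g_R)$ be bounded truncations agreeing with the originals on $\{|a| \le R, |x| \le R\}$, and replace $A$ by $A \cap \{|a| \le R\}$, so that \ref{assumption:B} applies to the truncated data. The previous stage yields a weak MFG solution $\widetilde P_R$ for each $R$. Testing optimality against a fixed $L^{p'}$-integrable control and using the coercivity in (A.5) in the form $f_R(t,x,\mu,a) \le c_2(1+|x|^p + \int|z|^p\mu(dz)) - c_3|a|^{p'}$ gives the uniform estimate
\[
\sup_R \E^{\widetilde P_R} \int_0^T\int_A |a|^{p'}\Lambda_t(da)\,dt < \infty,
\]
which, combined with Lemma \ref{le:stateestimate} and an Aldous-type argument, delivers tightness of $(\widetilde P_R)$ in $R$. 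Repeating the identification procedure of the second paragraph (the limit argument for the state SDE now uses the linear growth in (A.4) in place of boundedness) produces a weak MFG solution for the original problem, and the uniform bound above carries over to the limit, giving the enhanced $L^{p'}$-integrability asserted in the theorem.
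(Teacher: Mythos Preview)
Your two-stage architecture matches the paper's exactly: first pass to the limit under \ref{assumption:B} (this is Theorem \ref{th:mainexistence-bounded} via Lemma \ref{le:limitpresolution}), then truncate under \ref{assumption:A} and repeat (Section \ref{se:unboundedcoefficients}). The tightness arguments, the use of the enlarged conditional law on $\X = \C^m \times \V \times \C^d$, and the coercivity-based $L^{p'}$ bound are all essentially as in the paper.

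There are, however, two genuine gaps.

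\textbf{Compatibility.} Your claim that condition (3) of Definition \ref{def:mfg:weak:weak} ``can be recast as a martingale identity and therefore survives weak limits'' is not correct. The conditional independence of $\F^\Lambda_t$ and $\F^{\xi,B,W,\mu}_T$ given $\F^{\xi,B,W,\mu}_t$ is not a closed condition under weak convergence in any usable sense, and the paper explicitly warns that it ``does not behave well under limits.'' The paper's resolution (Lemma \ref{le:a.s.compatible}) is to \emph{derive} compatibility from the fixed-point condition (6): once $\mu = P((W,\Lambda,X)\in\cdot\mid B,\mu)$ is established, one can bootstrap the conditional-independence structure from the fact that $W$ is a Wiener process in the canonical filtration under almost every realization of $\mu$. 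This is the real reason the $\P^p(\X)$-valued $\mu$ (carrying the full $(W,\Lambda,X)$-law) is used rather than a mere $\P^p(\C^d)$-valued flow --- not only to ``track correlations'' for the optimality step, as you suggest, but to make compatibility recoverable after the limit.

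\textbf{Optimality.} You correctly flag the lifting of competitors as the main obstacle but provide no mechanism. The paper's device is a density result (Lemma \ref{le:adapteddensity}): every admissible $Q\in\A(\rho)$ can be approximated by controls of the form $\rho\circ(\xi,B,W,\mu,\phi(\xi,B,W,\mu))^{-1}$ with $\phi$ adapted and \emph{continuous in $\mu$}. Continuity in $\mu$ is exactly what allows one to substitute the pre-limit environment $\bar{\mu}^n(B)$ for $\mu$ and obtain an admissible control for the $n$-th discretized problem whose reward converges to that of the target competitor. Without this step there is no way to compare $J(\overline{P})$ against $J(P^\star)$ for arbitrary $P^\star\in\RC\A(\rho)$. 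The same density trick is recycled at the second (truncation) stage via Lemmas \ref{le:limp'1} and \ref{le:limp'2}.
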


\subsection{Canonical space}
\label{subse:setup}
In order to take weak limits of the discretized MFG, which is our purpose, 
it is convenient to work on a canonical space. 
As in the previous section, 
$\Omega_0 := \R^d \times \C^{m_0} \times \C^m$ will support 
the initial condition and the two Wiener processes driving the state equation. 
We also need the space $\V$ defined in the previous Subsection
\ref{subse:relaxed:controls} to handle the relaxed controls and the space $\C^d$ to handle the solution of the state equation. To sum up, we have:
\begin{align*}
\X := \C^m \times \V \times \C^d, \quad
\Omega_0 := \R^d \times \C^{m_0} \times \C^m, \quad
\Omega := \R^d \times \C^{m_0} \times \C^m \times \P^p(\X) \times \V \times \C^d.
\end{align*}
The identity map on $\Omega_{0}$ is still denoted by $(\xi,B,W)$
and the identity map on $\Omega$ by $(\xi,B,W,\mu,\Lambda,X)$. The map
$\mu$ generates the canonical filtration
\begin{equation}
\label{eq:filtration:mu}
\F^\mu_t := \sigma\left(\mu(C) : C \in \F^{W,\Lambda,X}_t\right).
\end{equation}
Recall from \eqref{def:flambda} the definition of the canonical filtration $(\F^\Lambda_t)_{t \in [0,T]}$ on $\V$,
and recall from \eqref{def:wlambda} the definition of $\W_\lambda \in \P(\Omega_0)$. 
We next specify how $\mu$ and $\Lambda$ are allowed
to correlate with each other and with the given sources of randomness $(\xi,B,W)$.
We will refer to the conditional independence requirement (3) of Definition 
\ref{def:mfg:weak:weak} as \emph{compatibility}, defined a bit more generally as follows:

\begin{enumerate}
\item An element $\rho \in \P^p(\Omega_{0} \times \P^p(\X))$ is said to be in $\P^p_{c}[(\Omega_{0},\W_{\lambda}) \leadsto 
{\mathcal P}^p(\X)]$ if 
$(\xi,B,W)$ has law $\W_{\lambda}$ under $\rho$ 
and if 
$B$ and $W$ are independent $(\F^{\xi,B,W,\mu}_t)_{t \in [0,T]}$-Wiener processes
under $\rho$. The subscript $c$ and the symbol $\leadsto$ in 
$\P^p_{c}[(\Omega_{0},\W_{\lambda})\leadsto 
{\mathcal P}^p(\X)]$ indicate that the 
extension of the probability measure $\W_\lambda$ from $\Omega_{0}$ to $\Omega_{0} \times 
{\mathcal P}^p(\X)$ is \textit{compatible}. 
\item For $\rho\in \P^p(\Omega_{0} \times \P^p(\X))$, an element $Q \in \P^p(\Omega_{0} \times \P^p(\X)\times \V)$ is said to be in $\P^p_{c}[(\Omega_{0} \times {\mathcal P}^p(\X),
\rho) \leadsto \V]$ if $(\xi,B,W,\mu)$ has law $\rho$ under $Q$ and 
${\mathcal F}_{T}^{\xi,B,W,\mu}$ and ${\mathcal F}_{t}^{\Lambda}$
are conditionally independent given ${\mathcal F}_{t}^{\xi,B,W,\mu}$. Again, 
$Q$ is then \textit{compatible} with $\rho$ in the sense that, given the
observation of $(\xi,B,W,\mu)$ up until time $t$, the observation of $\Lambda$
up until $t$
has no influence on the future of $(\xi,B,W,\mu)$. 
\end{enumerate}

\begin{remark}
These notions of compatibility are special cases of a more general idea, which goes by several names in the literature. It can be viewed as a compatibility of a larger filtration with a smaller one on a single probability space, in which case this is sometimes known as the \emph{H-hypothesis} \cite{bremaudyor-changesoffiltrations}. Alternatively, this can be seen as a property of an extension of a filtered probability space, known as a \emph{very good extension} \cite{jacodmemin-weaksolution} or \emph{natural extension} \cite{elkaroui-compactification}. The term \emph{compatible} is borrowed from Kurtz \cite{kurtz-yw2013}. The curious reader is referred to \cite{bremaudyor-changesoffiltrations,jacodmemin-weaksolution,kurtz-yw2013} for some equivalent definitions, but we will derive the needed results as we go, to keep the paper self-contained.
\end{remark}

We now have enough material to describe the optimization problem we will deal with. 
Given $\rho \in \P^p(\Omega_{0} \times \P^p(\X))$ (that is given the  
original sources of randomness and a compatible random measure), 
we denote by ${\mathcal A}(\rho) := \P^p_{c}[(\Omega_{0} \times {\mathcal P}^p(\X),
\rho) \leadsto \V]$ (see (2) above) the \textit{set of admissible relaxed controls}. 

Observe from (1) and (2) right above that, for $\rho
\in \P^p_{c}[(\Omega_{0},\W_{\lambda}) \leadsto 
{\mathcal P}^p(\X)]$ and $Q \in {\mathcal A}(\rho)$, the process $(B,W)$ is a Wiener process
with respect to the filtration $({\mathcal F}_{t}^{\xi,B,W,\mu,\Lambda})_{t \in [0,T]}$. 
Following (1), we will denote by $\P^p_{c}[(\Omega_{0},\W_{\lambda}) \leadsto 
{\mathcal P}^p(\X) \times \V]$ the elements of $\P^p(\Omega_{0} \times 
{\mathcal P}^p(\X) \times \V)$ under which $(B,W)$ is a Wiener process
with respect to the filtration $({\mathcal F}_{t}^{\xi,B,W,\mu,\Lambda})_{t \in [0,T]}$, so that, if 
$Q \in {\mathcal A}(\rho)$ with $\rho \in \P^p_{c}[(\Omega_{0},\W_{\lambda}) \leadsto 
{\mathcal P}^p(\X)]$, then $Q \in \P^p_{c}[(\Omega_{0},\W_{\lambda}) \leadsto 
{\mathcal P}^p(\X) \times \V]$.

For $Q \in \P^p_{c}[(\Omega_{0},\W_{\lambda}) \leadsto 
{\mathcal P}^p(\X) \times \V]$,  $\Lambda$ is $p$-integrable, that is 
$\E^Q\int_0^T\int_A|a|^p\Lambda_t(da)dt < \infty$. On the completion of the space $(\Omega_0 \times \P^p(\X) \times \V,( \F^{\xi,B,W,\mu,\Lambda}_t)_{t \in [0,T]},Q)$ there exists a unique strong solution $X$ of the SDE
\begin{align}
X_t = \xi + \int_0^tds\int_A\Lambda_s(da)b(s,X_s,\mu^x_s,a) + \int_0^t\sigma(s,X_s,\mu^x_s)dW_s + \int_0^t\sigma_0(s,X_s,\mu^x_s)dB_s. \label{def:SDE}
\end{align}
where we recall that $\mu^x (\cdot) = \mu (\C^m \times \V \times \cdot)$ is the marginal 
law of $\mu$ on $\C^d$ and $\mu^x_{s} := \mu^x \circ \pi_{s}^{-1}$. 
We then denote by $\RC(Q) := Q \circ (\xi,B,W,\mu,\Lambda,X)^{-1} \in \P(\Omega)$ the joint law of the solution. 
$\RC(Q)$ is the unique element $P$ of $\P(\Omega)$ such that $P \circ (\xi,B,W,\mu,\Lambda)^{-1} = Q$ and such that the canonical processes verify the SDE \eqref{def:SDE} under $P$ 
(again, see footnote${ \ }^{\ref{footnote:sde}}$
 on page
\pageref{footnote:sde} for a related discussion about the choice of the filtration). It belongs to 
$\RC(Q) \in \P^p(\Omega)$, see Lemma \ref{le:stateestimate}.

For each $\rho \in \P^p_{c}[(\Omega_{0},\W_{\lambda}) \leadsto 
{\mathcal P}^p(\X)]$, define
\[
\RC\A(\rho) := \RC(\A(\rho)) = \left\{\RC(Q) : Q \in \A(\rho)\right\}.
\]
Recalling the definition of $\Gamma$ from \eqref{def:gamma}, the expected reward functional $J : \P^p(\Omega) \rightarrow \R$ is defined by
\begin{equation}
\label{eq:cost:functional}
J(P) := \E^P\left[\Gamma(\mu^x,\Lambda,X)\right].
\end{equation}
The problem of maximizing $J(P)$ over $P \in \RC\A(\rho)$ is called the \emph{control problem associated to $\rho$}. Define the set of optimal controls corresponding to $\rho$ by
\begin{equation}
\label{eq:astar}
\A^\star(\rho) := \arg\max_{Q \in \A(\rho)}J(\RC(Q)),
\end{equation}
and note that
\begin{align*}
\RC\A^\star(\rho) &:= \RC(\A^\star(\rho)) = \arg\max_{P \in \RC\A(\rho)}J(P).
\end{align*}
Pay attention that, a priori, the set $\A^\star(\rho)$ may be empty.

\subsection{Relative compactness and MFG pre-solution}
With the terminology introduced above, we make a useful intermediate definition:
\begin{definition}[MFG pre-solution] 
\label{def:mfgsolution}
Suppose $P \in \P^p(\Omega)$ satisfies the following:
\begin{enumerate}
\item $(B,\mu)$, $\xi$ and $W$ are independent under $P$.
\item $P \in \RC\A(\rho)$ where $\rho := P \circ (\xi,B,W,\mu)^{-1}$ is in 
$\P^p_c[(\Omega_{0},\W_{\lambda}) \leadsto
\P^p(\X)]$.
\item $\mu = P((W,\Lambda,X) \in \cdot \ | \ B,\mu)$ a.s. That is, $\mu$ is a version of the conditional law of $(W,\Lambda,X)$ given $(B,\mu)$.
\end{enumerate}
Then we say that $P$ is a \emph{MFG pre-solution}. 
\end{definition}

\begin{remark} \label{re:tmarginals}
If $P$ is a MFG pre-solution then the condition (3) implies that $\mu^x_t = P(X_t \in \cdot \ | \ \F^{B,\mu^x}_t)$ for each $t$, where
\[
\F^{B,\mu^x}_t := \sigma(B_s, \mu^x_s : s \le t).
\]
Indeed, for any bounded measurable $\phi : \R^d \rightarrow \R$, since $\F^{B,\mu^x}_t \subset \F^{B,\mu}_T$ and $\mu^x_t$ is $\F^{B,\mu^x}_t$-measurable, we may condition by $\F^{B,\mu^x}_t$ on both sides of the equation $\E[\phi(X_t) \ | \ \F^{B,\mu}_T] = \int\phi\,d\mu^x_t$ to get the desired result. More carefully, this tells us $\E[\phi(X_t) \ | \ \F^{B,\mu^x}_t] = \int\phi\,d\mu^x_t$ a.s. for each $\phi$, and by taking $\phi$ from a countable sequence which is dense in pointwise convergence we conclude that $\mu^x_t$ is a version of the regular conditional law of $X_t$ given $\F^{B,\mu^x}_t$.
\end{remark}

Definition \ref{def:mfgsolution} is motivated by:

\begin{lemma} \label{le:limitpresolution}
Assume that \ref{assumption:B} holds. 
For each $n$, by Theorem \ref{th:approxexistence} we may find $\mu^n \in \M_n$ and $P_n \in \RC^{\star,n}_f(\mu^n,\A_f)$ such that $\mu^n = P_n(X \in \cdot \ | \ \G^n_T)$
(both being viewed as random probability measures on $\C^d$). On $
\X$, define
\[
\bar{\mu}^n := P_n\left((W,\Lambda,X) \in \cdot \ | \ \G^n_T\right),
\]
so that $\bar{\mu}^n$ can be viewed as a map
from $\C^{m_{0}}$ into $\P^p(\X)$ and 
$\bar{\mu}^n(B)$
as a random element of $\P^p(\X)$. 
Then the probability measures
\[
\overline{P}_n := P_n \circ (\xi,B,W,\bar{\mu}^n(B),\Lambda,X)^{-1}
\]
are relatively compact in $\P^p(\Omega)$, 
and every limit point is a MFG pre-solution.
\end{lemma}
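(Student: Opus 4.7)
The proof splits naturally into (a) relative compactness of $(\overline{P}_n)$ in $\P^p(\Omega)$ and (b) verification that any limit point satisfies the three conditions of Definition \ref{def:mfgsolution}.

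For (a), I would invoke the product-space criterion (Lemma \ref{le:productrelcompactness}) to reduce relative compactness of $(\overline{P}_n)$ to that of each marginal. The $(\xi,B,W)$-marginal equals $\W_\lambda$ throughout, and is trivially compact. Under \ref{assumption:B} the space $\V$ is compact, giving tightness of the $\Lambda$-marginal. The $X$-marginal is tight by Aldous' criterion using the uniform bound on $(b,\sigma,\sigma_0)$, exactly as in the second step of the proof of Theorem \ref{th:approxexistence}. The delicate marginal is $\bar{\mu}^n(B)$, taking values in $\P^p(\X)$: here I would observe that the mean measure $\E^{P_n}[\bar{\mu}^n(B)] = P_n \circ (W,\Lambda,X)^{-1}$ is tight (and uniformly $p$-integrable) by the arguments just recalled, and apply the standard fact that tightness of mean measures implies tightness of $\P(\P^p(\X))$-laws. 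Finally, uniform $p'$-moment bounds on $X$ (from Lemma \ref{le:stateestimate}) together with the corresponding bounds for $\xi, B, W, \Lambda$ give uniform $p$-integrability of $\|(\xi,B,W,\mu,\Lambda,X)\|$ under $\overline{P}_n$, upgrading weak convergence on $\Omega$ to convergence in $\P^p(\Omega)$ (using that $p' > p$).

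For (b), fix a subsequential limit $P$ of $\overline{P}_n$ (not relabeled). Condition (1) is cheap: under each $P_n$ the triple $(\xi, B, W)$ is independent, and $\bar{\mu}^n(B)$ is a Borel function of $B$, so $(B,\bar{\mu}^n(B)), \xi, W$ are $P_n$-independent; independence passes to weak limits, giving the claim for $P$. Condition (3) is handled by a standard conditional-law passage: since $\bar{\mu}^n(B)$ is $\G^n_T$-measurable, for each pair of bounded continuous test functions $\phi$ on $\X$ and $\psi$ on $\C^{m_0} \times \P^p(\X)$,
\[
\E^{P_n}\bigl[\phi(W,\Lambda,X)\psi(B,\bar{\mu}^n(B))\bigr] = \E^{P_n}\bigl[\bar{\mu}^n(B)(\phi)\,\psi(B,\bar{\mu}^n(B))\bigr];
\]
the map $(\nu,x) \mapsto \nu(\phi)\psi(\beta,\nu)$ is continuous and of $p$-order growth on $\P^p(\X)\times \C^{m_0}$, so passing to the limit (using Lemma \ref{le:componentwise} to handle the growth) yields
\[
\E^P[\phi(W,\Lambda,X)\psi(B,\mu)] = \E^P[\mu(\phi)\psi(B,\mu)].
\]
Ranging over $\psi$ gives $\mu(\phi) = \E^P[\phi(W,\Lambda,X)\mid B,\mu]$ almost surely, and a countable separating family of $\phi$ yields condition (3).

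Condition (2) requires two things: that $\rho := P\circ(\xi,B,W,\mu)^{-1}$ lies in $\P^p_c[(\Omega_0,\W_\lambda)\leadsto \P^p(\X)]$, that the limiting $Q := P\circ(\xi,B,W,\mu,\Lambda)^{-1}$ lies in $\A(\rho)$, and that $(\xi,B,W,\mu,\Lambda,X)$ solves the state SDE \eqref{def:SDE} under $P$. The SDE itself passes to the limit by the Kurtz--Protter weak-convergence theorem for stochastic integrals \cite{kurtzprotter-weakconvergence}, again as in Theorem \ref{th:approxexistence}. For the two compatibility statements, I would use the characterization that, for $0 \le s < t \le T$, any $\theta \in \R^{m_0+m}$, and any bounded continuous functional $h$ of $(\xi, B_{\cdot\wedge s}, W_{\cdot\wedge s}, (\mu^x_{r})_{r\leq s}, \Lambda|_{[0,s]})$, compatibility is equivalent to the identity
\[
\E\bigl[h\,\exp\bigl(i\theta\cdot(B_t-B_s,W_t-W_s)\bigr)\bigr] = \E[h]\exp\bigl(-|\theta|^2(t-s)/2\bigr).
\]
This identity holds under each $\overline{P}_n$ because $\bar{\mu}^n(B)$ is a function of $B$, and it transfers to $P$ by continuity of $h$ together with $p$-uniform integrability. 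Here the truncated time marginals $\mu \mapsto \mu \circ \pi_s^{-1}$ are continuous on $\P^p(\X)$, which is what permits $h$ to depend on the past of $\mu$.

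The main obstacle I anticipate is precisely this compatibility transfer, since compatibility is known not to be stable under generic weak limits: the saving grace is that in our setting the testing functional $h$ can be taken continuous on the product Polish space and the moment control provided by convergence in $\P^p$ (rather than $\P$) is exactly what makes the limiting identity legitimate. All other steps are either standard (compactness, Kurtz--Protter, passing independence) or, like condition (3), follow by the same template of approximating conditional expectations against continuous test functions with controlled growth.
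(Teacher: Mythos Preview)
Your overall plan is sound, but the verification of condition~(2)---specifically, that $Q := P \circ (\xi,B,W,\mu,\Lambda)^{-1}$ lies in $\A(\rho)$---has a real gap. The characteristic-function identity you propose only shows that $(B,W)$ remains a Wiener process in the enlarged filtration $(\F^{\xi,B,W,\mu,\Lambda}_t)_{t\in[0,T]}$; it is \emph{not} equivalent to the compatibility condition defining $\A(\rho)$, which demands that $\F^{\xi,B,W,\mu}_T$ and $\F^\Lambda_t$ be conditionally independent given $\F^{\xi,B,W,\mu}_t$. Once $\mu$ is a genuinely random element of $\P^p(\X)$ (as it may well be in the limit, since $B$-measurability need not survive), the future $\sigma$-field $\F^\mu_T$ is not generated by Brownian increments after time $t$, so preserving the Wiener property does not recover the conditional independence. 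The paper anticipates exactly this: it never attempts to pass compatibility to the limit directly, observing that ``the latter compatibility does not behave well under limits.'' Instead it invokes Lemma~\ref{le:a.s.compatible}, which derives $Q \in \A(\rho)$ \emph{a posteriori} from conditions (1) and (3), the Wiener property of $(B,W)$ in the full filtration, and $P(X_0=\xi)=1$. This lemma is, as the paper says, ``the main reason we work with the conditional law of $(W,\Lambda,X)$, and not just $X$''; your argument cannot be completed without it or something equivalent. (Note also that your test functional $h$ depends only on $(\mu^x_r)_{r\le s}$, whereas $\F^\mu_t$ is generated by $\mu(C)$ for all $C \in \F^{W,\Lambda,X}_t$---a strictly larger $\sigma$-field.)

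Two smaller repairs are also needed. Your pre-limit identity for condition~(3), $\E^{P_n}[\phi(W,\Lambda,X)\,\psi(B,\bar\mu^n(B))] = \E^{P_n}[\bar\mu^n(B)(\phi)\,\psi(B,\bar\mu^n(B))]$, fails: $\psi(B,\bar\mu^n(B))$ is $\sigma(B)$-measurable, not $\G^n_T$-measurable, and $\bar\mu^n(B)(\phi) = \E^{P_n}[\phi \mid \G^n_T]$ generally differs from $\E^{P_n}[\phi \mid B]$ (the state depends on the full path of $B$ through the stochastic integral). The paper fixes this by testing against $1_S(B)\psi(\mu)$ with $S \in \G^{n_0}_T$ for fixed $n_0$, so that the test function is $\G^n_T$-measurable once $n \ge n_0$, and then uses $\sigma(\bigcup_n \G^n_T) = \F^B_T$. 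Finally, you apply Aldous' criterion and Kurtz--Protter directly to $\overline P_n$, but under $P_n$ the canonical $X$ is the delayed interpolation $\hat Y^n$, which does \emph{not} satisfy the SDE~\eqref{def:SDE}; the paper introduces the un-interpolated laws $\overline P'_n$, carries out both arguments there, and shows $\ell_{\Omega,p}(\overline P_n,\overline P'_n) \to 0$.
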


\begin{proof}
\textit{First step.} Write $P_n = \RC^n_f(\mu^n,Q_n)$, for some $Q_n \in \A_f$, and define $P'_n := \RC_f(\mu^n,Q_n)$. Let
\[
\overline{P}'_n = P'_n \circ (\xi,B,W,\bar{\mu}^n(B),\Lambda,X)^{-1},
\]
so that $\overline{P}_n = \overline{P}'_n \circ (\xi,B,W,\mu,\Lambda,\hat{X}^n)^{-1}$, where $\hat{X}^n$ was defined in \eqref{eq:interpolation}. We first show that $\overline{P}'_n$ are relatively compact in $\P^p(\Omega)$. Clearly $P'_n \circ (B,W)^{-1}$ are relatively compact, and so are $P'_n \circ \Lambda^{-1}$ by compactness of $\V$. Since $A$ is compact, the moment bound of Lemma \ref{le:stateestimate} yields
\begin{align}
\sup_n\E^{\overline{P}'_n}\int_{\C^d}\|x\|_T^{p'}\mu^x(dx) < \infty. \label{pf:exist001}
\end{align}
Thus $P'_n \circ X^{-1}$ are relatively compact, by an application of Aldous' criterion (see Proposition \ref{pr:itocompact}). 
By Proposition \ref{pr:ptight}, relative compactness of $P'_n \circ (\bar{\mu}^n(B))^{-1}$ follows from that of the mean measures $P'_n \circ (W,\Lambda,X)^{-1}$ and from the uniform $p'$-moment bound of Lemma \ref{le:stateestimate}. Precisely, for a point $\chi_{0} \in \X$ and a metric $\ell$ on $\X$ compatible with the topology, 
\begin{equation*}
\begin{split}
\sup_{n}
\int_{\Omega} \biggl( \int_{\X} 
\ell^{p'}(\chi_{0},\chi) [\bar{\mu}^n(B)](d\chi) \biggr) dP'_{n}
&= \sup_{n}
{\mathbb E}^{P'_{n}}\bigl[ {\mathbb E}^{P'_{n}} \bigl[
\ell^{p'}\bigl(\chi_{0},(W,\Lambda,X)
\bigr) \vert {\mathcal G}^n_{T} \bigr] \bigr]
\\
&= \sup_{n}
{\mathbb E}^{P'_{n}}\bigl[ 
\ell^{p'}\bigl(\chi_{0},(W,\Lambda,X)
\bigr) \bigr] < \infty.
\end{split}
\end{equation*}
Hence $\overline{P}'_n$ are relatively compact in $\P^p(\Omega)$. 

\textit{Second step.} Next, we check that $\overline{P}_n = \overline{P}'_n \circ (\xi,B,W,\mu,\Lambda,\hat{X}^n)^{-1}$ are relatively compact and have the same limits as $\overline{P}'_n$. This will follow essentially from the fact that $\hat{x}^n \rightarrow x$ as $n\rightarrow\infty$ \emph{uniformly} on compact subsets of $\C^d$. Indeed, for $t \in [t^n_i,t^n_{i+1}]$, the definition of $\hat{x}^n$ implies
\begin{align*}
|\hat{x}^n_t - x_t| &\le |\hat{x}^n_t - x_{t^n_{i-1}}| + |x_{t^n_{i-1}} - x_t| \le |x_{t^n_i} - x_{t^n_{i-1}}| + |x_{t^n_{i-1}} - x_t|.
\end{align*}
Since $|t - t^n_{i-1}| \le 2 \cdot 2^{-n}T$ for $t \in [t^n_i,t^n_{i+1}]$, we get
\[
\|\hat{x}^n - x\|_T \le 2\sup_{|t - s| \le 2^{1-n}T}|x_t-x_s|, \ \forall x \in \C^d.
\]
If $K \subset \C^d$ is compact, then it is equicontinuous by Arzel\`a-Ascoli, and the above implies $\sup_{x \in K}\|\hat{x}^n - x\|_T \rightarrow 0$. With this uniform convergence in hand, we check as follows that $\overline{P}_n$ has the same limiting behavior as $\overline{P}'_n$. By Prohorov's theorem, for each $\epsilon > 0$ there exists a compact set $K_\epsilon \subset \C^d$ such that $\E^{\overline{P}'_n}[\|X\|_T^p1_{\{X \in K_\epsilon^c\}}] \le \epsilon$ for each $n$. Using the obvious coupling and the fact that $\|\hat{x}^n\|_T \le \|x\|_T$ for all $x \in \C^d$, 
\begin{align*}
\ell_{\Omega,p}(\overline{P}_n,\overline{P}'_n) \le \E^{\overline{P}'_n}\left[\|X - \hat{X}^n\|_T^p\right]^{1/p} \le 2\epsilon^{1/p} + \sup_{x \in K_\epsilon}\|\hat{x}^n-x\|_T.
\end{align*}
Send $n\rightarrow\infty$ and then $\epsilon \downarrow 0$.

\textit{Third step.} It remains to check that any limit point $\overline{P}$ of $\overline{P}_n$ (and thus of $\overline{P}'_n$) satisfies the required properties.
Note first that $(B,\mu)$, $\xi$, and $W$ are independent under $\overline{P}$, since $\bar{\mu}^n(B)$ is $B$-measurable and since $B$, $\xi$, and $W$ are independent under $P_n$. Moreover, 
$(B,W)$ is an $(\F_{t}^{\xi,B,W,\mu,\Lambda,X})_{t \in [0,T]}$ 
Wiener process (of dimension 
$m_{0}+m$) under $\overline{P}$
since it is under $P_{n}$. 
In particular, $\rho:=\overline{P} \circ (\xi,B,W,\mu)^{-1} \in
\P^p_{c}[(\Omega_{0},\W_{\lambda}) \leadsto 
{\mathcal P}^p(\X)]$.
Since $(\bar{\mu}^n(B))^x = \mu^n(B)$, the canonical processes $(\xi,B,W,\mu,\Lambda,X)$ verify the state equation \ref{def:SDE} under $\overline{P}'_n$ for each $n$. Hence, it follows from the results of Kurtz and Protter \cite{kurtzprotter-weakconvergence} that \eqref{def:SDE} holds under the limiting measure $\overline{P}$ as well.

We now check that $\mu = \overline{P}(\left. (W,\Lambda,X) \in \cdot \ \right| \F^{B,\mu}_T)$. Let $\overline{P}_{n_k}$ be a subsequence converging to $\overline{P}$. Fix $n_0 \in \N$ and $S \in \G^{n_0}_T$, and let $\psi : \P(\X) \rightarrow \R$ and $\phi : \X \rightarrow \R$ be bounded and continuous. Then, since $\bar{\mu}^n = P_n(\left.(W,\Lambda,X) \in \cdot \ \right| \G^n_T)$ and $\G^{n_0}_T \subset \G^n_T$ for $n \ge n_0$, we compute (using Lemma \ref{le:componentwise} to handle the indicator function)
\begin{align}
\E^{\overline{P}}\left[1_S(B)\psi(\mu)\phi(W,\Lambda,X) \right] &= \lim_{k\rightarrow\infty}\E^{P_{n_k}}\left[1_S(B)\psi(\bar{\mu}^{n_k})\phi(W,\Lambda,X) \right] \nonumber \\
	&= \lim_{k\rightarrow\infty}\E^{P_{n_k}}\left[1_S(B)\psi(\bar{\mu}^{n_k})\int\phi\,d\bar{\mu}^{n_k} \right] \nonumber 
	= \E^{\overline{P}}\left[1_S(B)\psi(\mu)\int\phi\,d\mu \right]. \label{pf:exist2}
\end{align}
Conclude by noting that $\sigma\left(\bigcup_{n=1}^\infty\G^n_T\right) = \sigma(B)$.

\textit{Conclusion.} We have checked (1) and (3) in Definition \ref{def:mfgsolution}. 
Concerning (2), we 
already know from the beginning of the second step that 
$\rho=\overline{P} \circ (\xi,B,W,\mu)^{-1} \in
\P^p_{c}[(\Omega_{0},\W_{\lambda}) \leadsto 
{\mathcal P}^p(\X)]$. It thus remains to prove that $Q=P \circ (\xi,B,W,\mu,\Lambda)^{-1}$ is in 
${\mathcal A}(\rho)$ (that is the relaxed control is admissible). This follows from the more general Lemma
\ref{le:a.s.compatible}
right below. 
\end{proof}

The definition of MFG pre-solution requires that $\rho$ is compatible with $\W_\lambda$ (in the sense of point (1) 
in Subsection \ref{subse:setup}), but also the admissibility $P \in \RC\A(\rho)$ requires that $P \circ (\xi,B,W,\mu,\Lambda)^{-1}$ is compatible with $\rho$ (in the sense of (2) in Subsection \ref{subse:setup}). Because the latter compatibility does not behave well under limits, it will be crucial to have an alternative characterization of MFG pre-solutions which allows us to avoid directly checking admissibility. 
Namely, Lemma \ref{le:a.s.compatible} below shows that admissibility essentially follows automatically from the fixed point condition (3) of Definition \ref{def:mfgsolution}. 
In fact, Lemma \ref{le:a.s.compatible} is the main reason we work with the conditional law of $(W,\Lambda,X)$, and not just $X$.

\begin{lemma} \label{le:a.s.compatible}
Let $P \in \P^p(\Omega)$ such that $(B,W)$ is a Wiener process with respect to the filtration 
$({\mathcal F}_{t}^{\xi,B,W,\mu,\Lambda,X})_{t \in [0,T]}$ under $P$, and define 
$\rho := P \circ (\xi,B,W,\mu)^{-1}$. Suppose that 
(1) and (3) in Definition \ref{def:mfgsolution} are satisfied and that $P(X_{0}=\xi)=1$. 
Then, for $P \circ \mu^{-1}$-almost every $\nu \in \P^p(\X)$,
$(W_{t})_{t \in [0,T]}$ is an $({\mathcal F}_{t}^{W,\Lambda,X})_{t \in [0,T]}$
Wiener process under $\nu$. 
Moreover, $Q = P \circ (\xi,B,W,\mu,\Lambda)^{-1}$ is in ${\mathcal A}(\rho)$. 
\end{lemma}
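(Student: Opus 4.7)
The two assertions rest on a single preliminary observation. Since $\mu = P((W,\Lambda,X) \in \cdot \,|\, B,\mu)$ from condition (3) is $\sigma(\mu)$-measurable, taking a further conditional expectation against $\sigma(\mu) \subseteq \sigma(B,\mu)$ gives $\mu = P((W,\Lambda,X) \in \cdot \,|\, \mu)$. Disintegrating $P$ along $\mu$ then produces, for $P \circ \mu^{-1}$-a.e.\ $\nu \in \P^p(\X)$, a version of $P(\cdot \,|\, \mu = \nu)$ under which $(W,\Lambda,X)$ has joint law $\nu$. Since $W$ is $P$-independent of $(B,\mu)$, the $\mu$-conditional marginal of $W$ satisfies $\mu^W = \W^m$ almost surely.

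\emph{Part 1 (Wiener under $\nu$).} Because $W$ is a $P$-Wiener process with respect to $\F_\cdot^{\xi,B,W,\mu,\Lambda,X}$, for $s < t$ and bounded measurable $\phi,\psi,G$ we have
\[
E^P\bigl[\phi(W_t - W_s)\,\psi(W_r,\Lambda_r,X_r: r \le s)\,G(\mu)\bigr] = E^{\W^m}[\phi(W_t - W_s)] \cdot E^P[\psi\,G(\mu)].
\]
Rewriting each side as an integral against $P \circ \mu^{-1}$ by the disintegration above and exploiting the arbitrariness of $G$ gives the same factorization $\nu$-almost surely. A countable dense family in $(\phi,\psi,s,t)$ then yields, for $P \circ \mu^{-1}$-a.e.\ $\nu$, the independence of $W_t - W_s$ and $\F_s^{W,\Lambda,X}$ under $\nu$ with the correct Gaussian distribution; combined with $\nu^W = \W^m$, this promotes $W$ to an $\F_\cdot^{W,\Lambda,X}$-Wiener process under $\nu$.

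\emph{Part 2 (Compatibility).} The non-trivial ingredient of $Q \in \A(\rho)$ is the conditional independence of $\F_T^{\xi,B,W,\mu}$ and $\F_t^\Lambda$ given $\F_t^{\xi,B,W,\mu}$, equivalently the $\F_t^{\xi,B,W,\mu}$-measurability of $E^P[F \,|\, \F_T^{\xi,B,W,\mu}]$ for every bounded $\F_t^\Lambda$-measurable $F$. I establish this by stripping the conditioning in three stages. \textbf{(a)} Since $\mu = P((W,\Lambda,X) \in \cdot \,|\, B,\mu)$ depends only on $\mu$, one has $\sigma(W,\Lambda,X) \perp \sigma(B) \,|\, \sigma(\mu)$; combined with $\xi = X_0 \in \sigma(W,\Lambda,X)$ almost surely and the standard rule that enlarging the conditioning $\sigma$-algebra by a sub-$\sigma$-algebra of either side preserves conditional independence, this gives $E^P[F|\xi,B,W,\mu] = E^P[F|\xi,W,\mu]$. \textbf{(b)} To eliminate $U := (W_v - W_t)_{v \ge t}$, condition on $\mu$ and apply the disintegration; Part 1 makes $U$ independent of $\F_t^{W,\Lambda,X}$ under $\mu$, and $\mu^W = \W^m$ then factors the integral. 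Averaging back yields the joint $P$-independence of $U$ and $(\xi,W|_{\le t},F,\mu)$, hence $E^P[F|\xi,W,\mu] = E^P[F|\xi,W|_{\le t},\mu]$. \textbf{(c)} Finally, the $\nu$-conditional law of $\Lambda|_{\le t}$ given $(X_0,W|_{\le t})$ is a regular kernel $K_\nu$ depending on $\nu$ only through the marginal $\nu^{(X_0,W|_{\le t},\Lambda|_{\le t})}$; this marginal is $\F_t^\mu$-measurable in $\nu$ because, for any Borel $A$ in its image, the pre-image $\{(w,\lambda,x) : (x_0,w|_{\le t},\lambda|_{\le t}) \in A\}$ lies in $\F_t^{W,\Lambda,X}$. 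Therefore $E^P[F|\xi,W|_{\le t},\mu] = K_\mu((\xi,W|_{\le t}),F)$ is $\sigma(\xi,W|_{\le t},\F_t^\mu)$-measurable, and \textit{a fortiori} $\F_t^{\xi,B,W,\mu}$-measurable.

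\emph{Main obstacle.} The delicate step is (c): $\mu$ is a single random probability measure rather than a time-indexed process, so the restriction $\F_t^\mu \subsetneq \sigma(\mu)$ can only be accessed through the generators $\mu(C)$ with $C \in \F_t^{W,\Lambda,X}$. The trick is to identify the piece of $\mu$ that actually drives the $(X_0,W|_{\le t})$-conditional law of $\Lambda|_{\le t}$ -- namely the joint marginal on $(X_0,W|_{\le t},\Lambda|_{\le t})$ -- and to observe that this marginal is $\F_t^\mu$-measurable by the very definition of $\F_t^\mu$; it is here that the fixed-point condition together with the a.s.\ Wiener property from Part 1 substitutes for a direct verification of compatibility that would otherwise be unavailable.
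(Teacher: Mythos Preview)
Your Part 1 coincides with the paper's first step: both use condition (3) to identify $\mu^W=\W^m$ a.s.\ and then test against $\F^{W,\Lambda,X}_t$-measurable functions to get the Wiener property under $\mu$.

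Your Part 2 is organized differently. The paper does not peel off $B$, the future of $W$, and the future of $\mu$ in three separate reductions; instead it works with a single product $\phi_t(\Lambda,X)\psi_T(B,\mu)\phi^w_{t+}(W)\psi_t(B,\mu)\phi^w_t(W)$ of test functions and, using (i) the independence of $W$ and $(B,\mu)$ to compute $\E[\psi_T(B,\mu)\phi^w_{t+}(W)\,|\,\F^{B,W,\mu}_t]$, together with (ii) the factorisation under $\mu$ from Part 1, runs a chain of six equalities to land directly on the conditional-independence identity $\F^{\Lambda,X}_t \perp \F^{B,W,\mu}_T \,|\,\F^{B,W,\mu}_t$. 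Only then does it bring in $\xi=X_0$ to pass from $\F^{B,W,\mu}_t$ to $\F^{\xi,B,W,\mu}_t$. Your steps (a) and (b) are clean and correct; the paper's manoeuvre packages the same information into one computation, at the cost of being less transparent about which piece of randomness is being eliminated at each line.

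The one place where your argument needs more care than you give it is step (c). Saying that the kernel $K_\nu$ ``depends on $\nu$ only through the marginal $\nu^{(X_0,W|_{\le t},\Lambda|_{\le t})}$'' is true at the level of equivalence classes, but to conclude that $K_\mu((\xi,W|_{\le t}),F)$ is a genuine $\sigma(\xi,W|_{\le t})\vee\F^\mu_t$-measurable \emph{random variable} you need a version of the disintegration that is jointly Borel in $(m,y)\mapsto K_m(y,\cdot)$ over $\P(Y\times Z)\times Y$. Such a version exists for Polish spaces (via a fixed martingale-limit construction along a countable generating algebra), so the step can be completed, but it is not as automatic as your write-up suggests. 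The paper's test-function route sidesteps this issue entirely: since $F(\Lambda)\phi_1(X_0)\phi_2(W|_{\le t})$ is $\F^{W,\Lambda,X}_t$-measurable, its $\mu$-integral is $\F^\mu_t$-measurable by definition, and no parametrised disintegration is ever invoked. That is the practical advantage of the paper's approach; yours, once the measurability lemma is supplied, has the advantage of making the logical structure of the compatibility proof much more visible.
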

\begin{proof}
\textit{First step.}
For $\nu \in \P(\X)$, let $\nu^w = \nu \circ W^{-1} \in \P(\C^m)$. To prove the first claim, let $\phi_1 : \P^p(\X) \rightarrow \R$ and $\phi_2 : \C^m \rightarrow \R$ be bounded and measurable. Then, since $P \circ W^{-1} = \W^m$ (with $\E$ denoting expectation under $P$),
\begin{align*}
\E\left[\phi_1(\mu)\right]\int_{\C^m}\phi_2\,d\W^m &= \E\left[\phi_1(\mu)\phi_2(W)\right] = \E\left[\phi_1(\mu)\int_{\C^m}\phi_2\,d\mu^w\right].
\end{align*}
The first equality follows from (1) in Definition \ref{def:mfgsolution} and the second one from 
(3) in Definition \ref{def:mfgsolution}. 

This holds for all $\phi_1$, and thus $\int\phi_2\,d\mu^w = \int\phi_2\,d\W^m$ a.s. This holds for all $\phi_2$, and thus $\mu^w = \W^m$ a.s. Now fix $t \in [0,T]$. Suppose $\phi_1 : \P^p(\X) \rightarrow \R$ is bounded and $\F^{\mu}_t$-measurable, $\phi_2 : \C^m \rightarrow \R$ is bounded and $\sigma(W_s - W_t : s \in [t,T])$-measurable, and $\phi_3 : \X \rightarrow \R$ is bounded and $\F^{W,\Lambda,X}_t$-measurable. Then $\phi_2(W)$ and $(\phi_1(\mu),\phi_3(W,\Lambda,X))$ are $P$-independent (since $W$ is a Wiener process with respect to 
$({\mathcal F}_{s}^{\xi,B,W,\mu,\Lambda})_{s\in [0,T]}$), and so
\begin{align*}
\E\left[\phi_1(\mu)\int_{\X}\phi_3\,d\mu\right]\int_{\C^m}\phi_2\,d\W^m	&= \E\left[\phi_1(\mu)\phi_3(W,\Lambda,X)\right]\int_{\C^m}\phi_2\,d\W^m \\
	&= \E\left[\phi_1(\mu)\phi_2(W)\phi_3(W,\Lambda,X)\right] \\
	&= \E\left[\phi_1(\mu)\int_{\X}\phi_2(w)\phi_3(w,q,x)\,\mu(dw,dq,dx)\right],
\end{align*}
the first and third equalities following from (3) in Definition \ref{def:mfgsolution}.
This holds for all $\phi_1$, and thus
\[
\int_{\C^m}\phi_2\,d\W^m\int_{\X}\phi_3(w,q,x)\mu(dw,dq,dx) = \int_{\X}\phi_2(w)\phi_3(w,q,x)\,\mu(dw,dq,dx), \ a.s.
\]
This holds for all $\phi_2$ and $\phi_3$, and thus it holds $P$-a.s. that $\sigma(W_s - W_t : s \in [0,T])$ and $\F^{W,\Lambda,X}_t$ are independent under almost every realization of $\mu$.

\textit{Second step.}
We now prove that $Q$ is in ${\mathcal A}(\rho)$ (notice that, by assumption, 
$(B,W)$ is a Wiener process with respect to the filtration 
$({\mathcal F}_{t}^{\xi,B,W,\mu})_{t \in [0,T]}$ under $P$). Fix $t \in [0,T]$. Let $\phi_t : \V \times \C^d \rightarrow \R$ be $\F^{\Lambda,X}_t$-measurable, let $\phi_t^w : \C^m \rightarrow \R$ be $\F^W_t$-measurable, let $\phi^w_{t+} : \C^m \rightarrow \R$ be $\sigma(W_s - W_t : s \in [t,T])$-measurable, let $\psi_T : \C^{m_0} \times \P^p(\X) \rightarrow \R$ be $\F^{B,\mu}_T$-measurable, and let $\psi_t : \C^{m_0} \times \P^p(\X) \rightarrow \R$ be $\F^{B,\mu}_t$-measurable. Assume all of these functions are bounded. We first compute
\begin{align*}
\E\left[\psi_T(B,\mu)\phi^w_{t+}(W)\psi_t(B,\mu)\phi^w_t(W)\right] &= \E\left[\psi_T(B,\mu)\psi_t(B,\mu)\right]\E\left[\phi^w_{t+}(W)\right]\E\left[\phi^w_t(W)\right] \\
	&= \E\left[\E\left[\left.\psi_T(B,\mu)\right|\F^{B,\mu}_t\right]\psi_t(B,\mu)\right]\E\left[\phi^w_{t+}(W)\right]\E\left[\phi^w_t(W)\right] \\
	&= \E\left[\E\left[\left.\psi_T(B,\mu)\right|\F^{B,\mu}_t\right]\phi^w_t(W)\psi_t(B,\mu)\right]\E\left[\phi^w_{t+}(W)\right],
\end{align*}
the first and third lines following from (1) in Definition \ref{def:mfgsolution}. 
This shows that
\begin{equation}
\label{eq:proof:compatibility:1}
\E\left[\left.\psi_T(B,\mu)\phi^w_{t+}(W)\right|\F^{B,W,\mu}_t\right] = \E\left[\left.\psi_T(B,\mu)\right|\F^{B,\mu}_t\right]\int_{\C^m}\phi^w_{t+}\,d\W^m.
\end{equation}
On the other hand, the first result of this Lemma implies that 
$(W_{t})_{t \in [0,T]}$ is an $({\mathcal F}_{t}^{W,\Lambda,X})_{t \in [0,T]}$ Wiener process under almost every realization of $\mu$, so that 
\begin{equation}
\label{eq:proof:compatibility:2}
\int_{\X}\phi_t(q,x)\phi^w_{t+}(w)\phi^w_t(w)\,\mu(dw,dq,dx) = 
\int_{\X}\phi_t(q,x)\phi^w_t(w)\,\mu(dw,dq,dx) \int_{\C^m}\phi^w_{t+}\,d\W^m, \ a.s.
\end{equation}
By \eqref{eq:filtration:mu}, note also that $\int_{\X}\phi_t(q,x)\phi^w_t(w)\,\mu(dw,dq,dx)$ is $\F^{B,\mu}_t$-measurable, since $\phi_t(\Lambda,X)\phi^w_t(W)$ is $\F^{W,\Lambda,X}_t$-measurable. Putting it together
(see right after the computations for more explanations):
\begin{align*}
\E&\left[\phi_t(\Lambda,X)\psi_T(B,\mu)\phi^w_{t+}(W)\psi_t(B,\mu)\phi^w_t(W)\right] \\
	&= \E\left[ \biggl( \int_{\X}\phi_t(q,x) \phi^w_{t+}(w)\phi^w_t(w)\,\mu(dw,dq,dx) \biggr)\,\psi_T(B,\mu)\psi_t(B,\mu)\right] \\
	&= \E\left[ \biggl( \int_{\X}\phi_t(q,x)\phi^w_t(w)\,\mu(dw,dq,dx) \biggr) \,\psi_T(B,\mu)\psi_t(B,\mu)\right]\int\phi^w_{t+}\,d\W^m \\
	&= \E\left[\biggl( \int_{\X}\phi_t(q,x)\phi^w_t(w)\,\mu(dw,dq,dx) \biggr)\,\E\left[\left.\psi_T(B,\mu)\right|\F^{B,\mu}_t\right]\psi_t(B,\mu)\right]\int_{\C^m}\phi^w_{t+}\,d\W^m \\
	&= \E\left[\phi_t(\Lambda,X)\phi^w_t(W)\E\left[\left.\psi_T(B,\mu)\right|\F^{B,\mu}_t\right]\psi_t(B,\mu)\right]\int_{\C^m}\phi^w_{t+}\,d\W^m \\
	&= \E\left[\E\left[\left.\phi_t(\Lambda,X)\right|\F^{B,W,\mu}_t\right]\E\left[\left.\psi_T(B,\mu)\right|\F^{B,\mu}_t\right]\psi_t(B,\mu)\phi^w_t(W)\right]\int_{\C^m}\phi^w_{t+}\,d\W^m \\
	&= \E\left[\E\left[\left.\phi_t(\Lambda,X)\right|\F^{B,W,\mu}_t\right]\E\left[\left.\psi_T(B,\mu)\phi^w_{t+}(W)\right|\F^{B,W,\mu}_t\right]\psi_t(B,\mu)\phi^w_t(W)\right],
\end{align*}
the first equality following from (3) in Definition \ref{def:mfgsolution}, the second one
from \eqref{eq:proof:compatibility:2}, the third one from the fact that 
$\int\phi_t(q,x)\phi^w_t(w)\,\mu(dw,dq,dx)$ is $\F^{B,\mu}_t$-measurable, 
the fourth one from (3) in Definition \ref{def:mfgsolution} and the last one from
\eqref{eq:proof:compatibility:1}.

Replacing $\phi_t^w(W)$ with $\phi_t^w(W)\psi_t^w(W)$, where both $\phi_t^w$ and $\psi_t^w$ are $\F^W_t$-measurable, we see that
\begin{align*}
\E&\left[\left.\phi_t(\Lambda,X)\psi_T(B,\mu)\phi^w_{t+}(W)\phi_t^w(W)\right|\F^{B,W,\mu}_t\right] \\
	&= \E\left[\left.\phi_t(\Lambda,X)\right|\F^{B,W,\mu}_t\right]\E\left[\left.\psi_T(B,\mu)\phi^w_{t+}(W)\phi_t^w(W)\right|\F^{B,W,\mu}_t\right].
\end{align*}
Since random variables of the form $\phi_t^w(W)\phi_{t+}^w(W)$ generate $\F^W_T$, this shows that $\F^{\Lambda,X}_t$ is conditionally independent of $\F^{B,W,\mu}_T$ given $\F^{B,W,\mu}_t$.

\textit{Last step.} It now remains to prove that 
$\F^{\Lambda}_t$ is conditionally independent of $\F^{\xi,B,W,\mu}_T$ given $\F^{\xi,B,W,\mu}_t$, which is slightly different from the result of the previous step. To do so, we use the fact that $P(X_{0}=\xi)=1$. Let $\phi_t : \V \rightarrow \R$ be $\F^{\Lambda}_t$-measurable, 
$\psi_t : \C^{m_{0}} \times \C^m \times \P^p(\X)
 \rightarrow \R$ be $\F^{B,W,\mu}_t$-measurable, 
 $\psi_T : \C^{m_{0}} \times \C^m \times \P^p(\X)
 \rightarrow \R$ be $\F^{B,W,\mu}_T$-measurable
 and $\zeta_{0} : \R \rightarrow \R$ be Borel measurable. Assume all of these functions are bounded. 
 From the previous step, we deduce that
 \begin{equation*}
 \begin{split}
&\E\bigl[\phi_t(\Lambda)\psi_T(B,W,\mu)\psi_t(B,W,\mu) \zeta_{0}(\xi)\bigr] 
\\
&= \E\bigl[\phi_t(\Lambda)\zeta_{0}(X_{0})\psi_T(B,W,\mu)\psi_t(B,W,\mu) \bigr] 
\\
&= \E\left[\E\left[\left. \phi_t(\Lambda)\zeta_{0}(X_{0})
\right|\F^{B,W,\mu}_t
\right]
\E\left[\left.
\psi_T(B,W,\mu)
\right|\F^{B,W,\mu}_t
\right]
\psi_t(B,W,\mu) 
\right] 
\\
&= \E\left[ \phi_t(\Lambda)\zeta_{0}(X_{0})
\E\left[\left.
\psi_T(B,W,\mu)
\right|\F^{B,W,\mu}_t
\right]
\psi_t(B,W,\mu) 
\right] 
\\
&= \E\left[ \phi_t(\Lambda)
\E\left[\left.
\psi_T(B,W,\mu)
\right|\F^{B,W,\mu}_t
\right]
\psi_t(B,W,\mu) \zeta_{0}(\xi)
\right],
\end{split}
 \end{equation*}
the second equality following from the conditional independence of ${\mathcal F}_{t}^{\Lambda,X}$
and ${\mathcal F}_{T}^{B,W,\mu}$ given ${\mathcal F}_{t}^{B,W,\mu}$. In order to complete the proof, notice 
that
$\E[
\psi_T(B,W,\mu)
|\F^{B,W,\mu}_t
]=\E[
\psi_T(B,W,\mu)
|\F^{\xi,B,W,\mu}_t
]$ since
$\xi$ and $(B,W,\mu)$ are independent under $P$ (see (1) in Definition \ref{def:mfgsolution}). 
Therefore, for another bounded Borel measurable function $\zeta_{0}' : \R \rightarrow \R$, we get
 \begin{equation*}
 \begin{split}
&\E\left[\phi_t(\Lambda)\psi_T(B,W,\mu)\psi_t(B,W,\mu) \zeta_{0}(\xi)\zeta_{0}'(\xi)\right] 
\\
&= \E\left[ \phi_t(\Lambda)
\E\left[\left.
\psi_T(B,W,\mu)
\right|\F^{\xi,B,W,\mu}_t
\right]
\psi_t(B,W,\mu) \zeta_{0}(\xi)\zeta_{0}'(\xi)
\right] 
\\
&=
\E\left[ \phi_t(\Lambda)
\E\left[\left.
\zeta_{0}'(\xi) \psi_T(B,W,\mu)
\right|\F^{\xi,B,W,\mu}_t
\right]
\psi_t(B,W,\mu) \zeta_{0}(\xi)
\right],
\end{split}
 \end{equation*}
which proves that 
${\mathcal F}_{t}^{\Lambda}$
and ${\mathcal F}_{T}^{\xi,B,W,\mu}$ are conditionally independent 
given ${\mathcal F}_{t}^{\xi,B,W,\mu}$.
\end{proof}

\subsection{Existence of a MFG solution under Assumption \ref{assumption:B}}

The goal of this section is to prove that the limit points constructed in the previous paragraph are not
only MFG pre-solutions but are weak MFG solutions:
 	
\begin{theorem} \label{th:mainexistence-bounded}
Assume that \ref{assumption:B} holds
and keep the notation of Lemma 
\ref{le:limitpresolution}. Then, 
every limit point is a weak MFG solution with weak control.
\end{theorem}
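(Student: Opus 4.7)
The plan is to verify the six conditions of Definition \ref{def:mfg:weak:weak} for a limit point $\overline{P}$ furnished by Lemma \ref{le:limitpresolution}. Under Assumption \ref{assumption:B} the control space $A$ is compact, so the $p$-integrability of $\Lambda$ in condition (3) is automatic, and Lemma \ref{le:limitpresolution} already asserts that $\overline{P}$ is an MFG pre-solution in the sense of Definition \ref{def:mfgsolution}. From this we read off the bulk of Definition \ref{def:mfg:weak:weak}: condition (2) is (1) of Definition \ref{def:mfgsolution}; the compatibility of $\Lambda$ in (3) and the state equation (4) follow from $\overline{P} = \RC(Q)$ for a compatible $Q \in \A(\rho)$ with $\rho := \overline{P} \circ (\xi,B,W,\mu)^{-1}$; condition (6) is (3) of Definition \ref{def:mfgsolution}; and (1) is a setup statement on the canonical filtration $(\F^{\xi,B,W,\mu,\Lambda,X}_t)_{t \in [0,T]}$, with the Wiener property of $(B,W)$ coming from compatibility. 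The only substantive task is condition (5), optimality.

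For optimality, let $(B',W',\nu,\Lambda',X')$ on $(\Omega',(\F'_t)_{t \in [0,T]},P')$ be any competitor satisfying (1)--(4) of Definition \ref{def:mfg:weak:weak} with $P' \circ (X_0',B',W',\nu)^{-1} = \overline{P} \circ (X_0,B,W,\mu)^{-1}$. The strategy is to build, for each $n$ along the subsequence from Lemma \ref{le:limitpresolution}, a control $Q_n' \in \A_f$ in the $n$-th discretized MFG for which $J_f(\mu^n,\RC_f^n(\mu^n,Q_n')) \to J(P')$. Theorem \ref{th:approxexistence} then provides $J_f(\mu^n,P_n) \ge J_f(\mu^n,\RC_f^n(\mu^n,Q_n'))$, and sending $n \to \infty$ the left side converges to $J(\overline{P})$ via the convergence $\overline{P}_n \to \overline{P}$ in $\P^p(\Omega)$, continuity of $\Gamma$ (Lemma \ref{le:usc}), and the uniform moment control from Lemma \ref{le:stateestimate} (with Lemma \ref{le:componentwise} handling the $p$-order growth of $\Gamma$).

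The construction of $Q_n'$ is the heart of the matter: since $\V$ is Polish, one can disintegrate $P'$ to extract a regular conditional distribution $K(dq \mid x_0,\beta,w,\eta)$ of $\Lambda'$ given $(X_0',B',W',\nu)$, and paste $K$ against $\W_\lambda$ on $\Omega_0 \times \P^p(\X) \times \V$ using $\bar{\mu}^n(B)$ in place of $\nu$, since the $n$-th discretized game takes its environment from $\bar{\mu}^n(B)$. That the resulting $Q_n'$ lies in $\A_f$ — i.e.\ that $(B,W)$ remain Wiener processes with respect to $(\F^{\xi,B,W,\mu,\Lambda}_t)_{t \in [0,T]}$ — is a compatibility property inherited from $P'$ via an argument along the lines of Lemma \ref{le:a.s.compatible}. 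The main obstacle is the convergence $J_f(\mu^n,\RC_f^n(\mu^n,Q_n')) \to J(P')$: because the SDE coefficients are only continuous (not Lipschitz) in $(x,\mu)$, stability of the state equation along the sequence must be obtained from the Kurtz--Protter martingale-problem machinery \cite{kurtzprotter-weakconvergence}, exactly as in the proof of Lemma \ref{le:limitpresolution}. After verifying tightness via Lemma \ref{le:stateestimate}, one identifies every subsequential weak limit of $\RC_f^n(\mu^n,Q_n')$ with $P'$ (upon integrating out the measure-flow component), and the continuity and growth control of $\Gamma$ deliver convergence of the rewards.
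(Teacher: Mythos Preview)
Your reduction to optimality (condition (5)) is exactly right, and the architecture --- build discretized competitors $Q_n'$, invoke the optimality of $P_n$, pass to the limit --- is the same as the paper's. But the construction you propose for $Q_n'$ has a genuine gap that the paper's argument is designed specifically to avoid.

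You disintegrate $P'$ to obtain a kernel $K(dq \mid x_0,\beta,w,\eta)$ and then substitute $\bar{\mu}^n(B)$ for $\eta$. There are two problems. First, $K$ is only defined $\rho$-almost everywhere, where $\rho = P' \circ (X_0',B',W',\nu)^{-1}$, and the law $\rho_n := \W_\lambda \circ (\xi,B,W,\bar{\mu}^n(B))^{-1}$ is in general mutually singular with $\rho$: under $\rho_n$ the measure component takes only finitely many values. So the very definition of $Q_n'$ depends on which version of $K$ you pick off a $\rho$-null set. Second, and more fundamentally, even after fixing a version, $K$ is only \emph{measurable} in $\eta$. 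You know $\rho_n \to \rho$ in $\P^p$, but without continuity of $K$ in $\eta$ there is no reason for $\rho_n K \to \rho K$. The Kurtz--Protter machinery you invoke handles the passage from convergence of $(\xi,B,W,\mu,\Lambda)$-laws to convergence of $(\xi,B,W,\mu,\Lambda,X)$-laws; it does nothing to produce convergence of the $\Lambda$-marginals in the first place. Your claim that ``every subsequential weak limit of $\RC_f^n(\mu^n,Q_n')$'' equals $P'$ is precisely where this fails.

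The paper confronts this obstruction with Lemma \ref{le:adapteddensity}: any $Q^\star \in \A(\rho)$ can be approximated by controls of the form $\rho \circ (\xi,B,W,\mu,\phi_k(\xi,B,W,\mu))^{-1}$ where $\phi_k : \Omega_0 \times \P^p(\X) \to \V$ is an \emph{adapted continuous} function. Continuity in $\mu$ is the whole point: it lets you substitute $\bar{\mu}^n(B)$ and conclude, via Lemma \ref{le:componentwise}, that $Q^k_n := \W_\lambda \circ (\xi,B,W,\phi_k(\xi,B,W,\bar{\mu}^n(B)))^{-1}$ converges to $Q^k$ as $n \to \infty$; then Lemmas \ref{le:rccontinuous}, \ref{le:pinuniform} and \ref{le:jcontinuous} deliver $J_f(\mu^n,\RC^n_f(\mu^n,Q^k_n)) \to J(\RC(Q^k))$, and a second limit $k \to \infty$ recovers $J(P^\star)$. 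Your outline is salvageable if you insert this density-by-continuous-adapted-controls step; the raw disintegration kernel is not enough.
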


Generally speaking, it remains to show that any limit point of the sequence of Lemma \ref{le:limitpresolution} is optimal for the corresponding control problem:

\begin{lemma} \label{le:consistence:definitions}
Assume that a MFG pre-solution $P$ 
satisfies $P \in \RC\A^\star(\rho)$, 
with $\rho$ given by $\rho := P \circ (\xi,B,W,\mu)^{-1}$,
then 
$(\Omega,(\F^{\xi,B,W,\mu,\Lambda,X}_t)_{t \in [0,T]},P,B,W,\mu,\Lambda,X)$ is a weak MFG solution with weak control.
\end{lemma}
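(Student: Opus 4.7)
The plan is to verify conditions (1)--(6) of Definition \ref{def:mfg:weak:weak} in turn, taking the filtration to be (the $P$-completion of) the canonical filtration $(\F^{\xi,B,W,\mu,\Lambda,X}_t)_{t \in [0,T]}$. Conditions (2), (4), and (6) are essentially built into the definition of MFG pre-solution: (2) is point (1) of Definition \ref{def:mfgsolution} combined with $P(X_0 = \xi) = 1$; (4) is forced by $P \in \RC\A(\rho)$, since by uniqueness of $\RC$ the canonical processes satisfy \eqref{def:SDE} under $P$; and (6) is point (3) of Definition \ref{def:mfgsolution}.

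Next I would verify (1) and (3). The admissibility $P \in \RC\A(\rho)$ gives, via the definitions in Subsection \ref{subse:setup}, that under $P$ the pair $(B,W)$ is a Wiener process with respect to $(\F^{\xi,B,W,\mu,\Lambda}_t)_{t \in [0,T]}$ and that $(\F^\Lambda_t)_{t \in [0,T]}$ is conditionally independent of $\F^{\xi,B,W,\mu}_T$ given $\F^{\xi,B,W,\mu}_t$. Because $X$ is obtained as the strong solution of \eqref{def:SDE} with coefficients adapted to $(\F^{\xi,B,W,\mu,\Lambda}_t)_{t \in [0,T]}$, each $X_s$ is measurable with respect to the $P$-completion of $\F^{\xi,B,W,\mu,\Lambda}_s$, so the Wiener property of $(B,W)$ passes to the enlarged filtration $\F^{\xi,B,W,\mu,\Lambda,X}_t$, which handles (1); the $\F^\mu_t$-measurability requirement in (1) is immediate from \eqref{eq:filtration:mu}. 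For (3), progressivity of $\Lambda$ holds by construction, $p$-integrability follows from $P \in \P^p(\Omega)$ and the definition of the $p$-Wasserstein metric on $\V$, and the compatibility statement is exactly the conditional independence just recalled.

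The delicate point is the optimality condition (5). Given an alternative tuple $(\Omega', (\F'_t), P', B', W', \nu, \Lambda', X')$ satisfying (1)--(4) with $P \circ (X_0,B,W,\mu)^{-1} = P' \circ (X'_0,B',W',\nu)^{-1}$, I would push it forward to the canonical space by setting $P'' := P' \circ (X'_0,B',W',\nu,\Lambda',X')^{-1} \in \P^p(\Omega)$. The plan is then to show $P'' \in \RC\A(\rho)$. The marginal identity $P'' \circ (\xi,B,W,\mu)^{-1} = \rho$ is automatic. Conditions (1) and (3) on the primed tuple translate directly into the two compatibility conditions defining $\A(\rho)$: under $P''$, $(B,W)$ is a Wiener process with respect to $(\F^{\xi,B,W,\mu,\Lambda}_t)_{t \in [0,T]}$, and $\F^\Lambda_t$ is conditionally independent of $\F^{\xi,B,W,\mu}_T$ given $\F^{\xi,B,W,\mu}_t$. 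Condition (4) for the primed tuple becomes the state equation \eqref{def:SDE} under $P''$, so by the uniqueness portion of Lemma \ref{le:stateestimate} we have $P'' = \RC(P'' \circ (\xi,B,W,\mu,\Lambda)^{-1}) \in \RC\A(\rho)$. Applying the hypothesis $P \in \RC\A^\star(\rho)$ yields $J(P) \ge J(P'')$, which, since $\Gamma$ depends only on $(\mu^x,\Lambda,X)$ and the law of $(\nu^x,\Lambda',X')$ under $P'$ equals the law of $(\mu^x,\Lambda,X)$ under $P''$, rewrites as the inequality demanded in (5).

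The main obstacle is this last push-forward argument: one must confirm that purely distributional compatibility and conditional-independence statements transfer from the abstract space $(\Omega',\F'_t,P')$ to the canonical filtration on $\Omega$. Since the filtrations on the canonical side are generated precisely by the pushed-forward coordinate maps, the translation is essentially a bookkeeping exercise once one writes each conditional independence as an equality of regular conditional laws of bounded measurable functionals, but keeping track of which filtration is used on which space is where care is required.
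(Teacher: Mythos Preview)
Your proposal is correct and follows essentially the same route as the paper's proof, which is a very terse one-paragraph sketch saying that conditions (1)--(4) and (6) of Definition \ref{def:mfg:weak:weak} are immediate from the pre-solution properties and that (5) follows ``by transferring any $(\Omega',(\F'_t)_{t \in [0,T]},P',B',W',\nu,\Lambda',X')$ as in (5) onto the canonical space.'' You have simply fleshed out this transfer argument for (5) in more detail than the paper does; the one small point the paper flags that you omit is the distinction between $\sigma(\Lambda_s : s \le t)$ in Definition \ref{def:mfg:weak:weak}(3) and the canonical $\F^\Lambda_t$ of \eqref{def:flambda}, but as the paper notes this is harmless by the discussion following \eqref{def:flambda}.
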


\begin{proof} The proof is quite straightforward since the pre-solution properties of $P$ guarantee
that the canonical process under $P$ satisfy (1--4) and (6) in Definition \ref{def:mfg:weak:weak}.
Condition (3) of Definition \ref{def:mfg:weak:weak} uses $\sigma(\Lambda_s : s \le t)$,
whereas the notion of compatibility in the definition of MFG pre-solutions uses the canonical filtration $\F^\Lambda_t$ defined by \eqref{def:flambda},
but this is no cause for concern in light of the discussion following \eqref{def:flambda}.
The additional condition $P \in \RC\A^\star(\rho)$ permits to verify (5) 
in Definition \ref{def:mfg:weak:weak}
by transferring any 
$(\Omega',(\F'_t)_{t \in [0,T]},P',B',W',\nu,\Lambda',X')$
as in (5) onto the canonical space. 
\end{proof}

\subsubsection{Strategy}
In order to check the condition $P \in \RC\A^\star(\rho)$ in Lemma 
\ref{le:consistence:definitions}, the idea is to approximate any alternative MFG control by a sequence of particularly well-behaved controls for the discretized game. The crucial technical device is Lemma \ref{le:adapteddensity}, but we defer its proof to the appendix. The following definition is rather specific to the setting of compact control space $A$ (we assume that
 \ref{assumption:B} holds throughout the section), but it will return in a more general form in Section \ref{se:unboundedcoefficients}:

\begin{definition} \label{def:adapted}
A function $\phi : \Omega_0 \times \P^p(\X) \rightarrow \V$ is said to be \emph{adapted} if $\phi^{-1}(C) \in \F^{\xi,B,W,\mu}_t$ for each $C \in \F^\Lambda_t$ and $t \in [0,T]$. For $\rho \in \P^p_{c}[(\Omega_{0},\W_{\lambda}) \leadsto 
{\mathcal P}^p(\X)]$ (that is $(\xi,B,W)$ has law $\W_{\lambda}$ under $\rho$ and  
$B$ and $W$ are independent $(\F^{\xi,B,W,\mu}_t)_{t \in [0,T]}$-Wiener processes
under $\rho$), let $\A_a(\rho)$ denote the set of measures of the form
\begin{align}
\rho(d\omega,d\nu)\delta_{\phi(\omega,\nu)}(dq) = \rho \circ (\xi,B,W,\mu,\phi(\xi,B,W,\mu))^{-1} \label{def:adaptedform}
\end{align}
where $\phi$ is adapted and \emph{continuous}.
\end{definition}

\begin{lemma} \label{le:adapteddensity}
For each $\rho \in \P^p_c[(\Omega_{0},\W_{\lambda}) \leadsto 
{\mathcal P}^p(\X)]$, $\A_a(\rho)$ is a dense subset of $\A(\rho)$.
\end{lemma}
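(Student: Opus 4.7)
My plan is to approximate a general $Q \in \A(\rho)$ in three successive stages, then combine them via a diagonal argument.

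\textbf{Stage 1 (Time-discretization).} Given $Q \in \A(\rho)$, write the canonical $\Lambda$ (compatible with $(\xi,B,W,\mu)$) as a sum of $\P(A)$-valued random variables located at dyadic times. Specifically, define
\[
\Lambda^n_t := \overline{\Lambda}(t^n_k,\Lambda), \quad t \in [t^n_k,t^n_{k+1}),
\]
so $\Lambda^n$ is piecewise constant in time, each $\Lambda^n_{t^n_k}$ is $\F^{\xi,B,W,\mu,\Lambda}_{t^n_k}$-measurable, and $\Lambda^n$ is still compatible. A standard Lebesgue-differentiation argument (cf.\ Proposition \ref{pr:wasserstein} and the construction of $\overline{\Lambda}$) gives that $\Lambda^n \to \Lambda$ in $\V$ (equipped with $\ell_{\V}$), so that $Q^n := Q \circ (\xi,B,W,\mu,\Lambda^n)^{-1} \to Q$ in the $p$-Wasserstein metric on $\P^p(\Omega_0 \times \P^p(\X) \times \V)$.

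\textbf{Stage 2 (Finite-dimensional realization as a function of the inputs).} For fixed $n$, $\Lambda^n$ is determined by finitely many $\P(A)$-valued variables $\Lambda^n_{t^n_k}$. Since $\rho$ is compatible with $\W_\lambda$, the Brownian increments $W_{t^n_{k+1}} - W_{t^n_k}$ are independent of $\F^{\xi,B,W,\mu}_{t^n_k}$ under $\rho$. Using the atomlessness of the law of these increments, we can isolate (via a Borel isomorphism with $[0,1]$) an independent uniform variable $U_k$ measurable with respect to $\sigma(W_{t^n_{k+1}}-W_{t^n_k})$. A measurable disintegration of the $Q$-conditional law of $\Lambda^n_{t^n_k}$ given $\F^{\xi,B,W,\mu}_{t^n_k}$ then yields a measurable map
\[
g^n_k : \Omega_0 \times \P^p(\X) \to \P(A), \qquad g^n_k\bigl((\xi,B,W,\mu)\bigr) \text{ is } \F^{\xi,B,W,\mu}_{t^n_{k+1}}\text{-measurable},
\]
such that $(\xi,B,W,\mu,g^n_k(\cdot))$ under $\rho$ has the same joint law as $(\xi,B,W,\mu,\Lambda^n_{t^n_k})$ under $Q$. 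Assembling the $g^n_k$ into a piecewise-constant-in-time element $g^n$ of $\V$, we get a measurable adapted function $g^n : \Omega_0 \times \P^p(\X) \to \V$ with $\rho \circ (\xi,B,W,\mu,g^n)^{-1} = Q^n$.

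\textbf{Stage 3 (Mollification to continuity).} To upgrade $g^n$ from merely measurable adapted to continuous adapted, use a mollification that respects the filtration: convolve in the $\xi$-variable with a standard smooth kernel, and in the $(B,W)$-variable with smooth kernels of the form $\eta_\varepsilon$ acting separately on the increments over each sub-interval $[t^n_k,t^n_{k+1}]$, which preserves the adaptedness required in Definition \ref{def:adapted}. For the $\mu$-variable in $\P^p(\X)$, one first reduces to a countable dense family of bounded continuous test functions to express $g^n$ as a limit of cylindrical functions in $\mu$, and then smooths in those finite-dimensional projections. By dominated convergence and Proposition \ref{pr:wasserstein}, the resulting continuous adapted $\phi_{n,\varepsilon}$ satisfies $\rho \circ (\xi,B,W,\mu,\phi_{n,\varepsilon}(\xi,B,W,\mu))^{-1} \to Q^n$ in $\P^p$-Wasserstein as $\varepsilon \downarrow 0$.

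A diagonal extraction across $n$ and $\varepsilon$ produces the desired approximating sequence in $\A_a(\rho)$. \emph{The main obstacle} is Stage 2: executing the simulation of the $Q$-conditional laws of $\Lambda^n_{t^n_k}$ by deterministic functions of $(\xi,B,W,\mu)$ in a way that is genuinely $\F^{\xi,B,W,\mu}_{t^n_{k+1}}$-measurable (not just adapted modulo the $Q$-augmentation). This requires careful use of the Brownian innovations as independent sources of uniform randomness, together with the compatibility hypothesis on $\rho$ to guarantee that those innovations remain independent of $\F^{\xi,B,W,\mu}_{t^n_k} \vee \sigma(\mu)$ conditional on the appropriate past. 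All other steps are standard mollification/diagonalization bookkeeping.
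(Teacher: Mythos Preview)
Your Stage 2 contains a genuine error. You claim that $(\xi,B,W,\mu,g^n_k(\cdot))$ under $\rho$ has the \emph{same} joint law as $(\xi,B,W,\mu,\Lambda^n_{t^n_k})$ under $Q$. This is impossible: your $g^n_k$ depends on $W_{t^n_{k+1}}-W_{t^n_k}$ through $U_k$, so it is correlated with the $W$-component of $(\xi,B,W,\mu)$, whereas compatibility forces $\Lambda^n_{t^n_k}$ to be conditionally independent of that increment given $\F^{\xi,B,W,\mu}_{t^n_k}$. Concretely, take $\xi,B,\mu$ deterministic and $\Lambda^n_0$ independent of $W$ with non-Dirac law $\nu$: your construction yields $g^n_0 = F_\nu^{-1}(U_0(W_{t^n_1}))$, a deterministic function of $W_{t^n_1}$, whose joint law with $W$ is not that of an independent $\nu$-sample. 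The noise-transfer trick works only when the auxiliary uniform is \emph{external} to the variables whose joint law you want to preserve; extracting it from $W$ itself destroys exactly the independence you need. Two further problems compound this: (i) the assembled $g^n$ is not adapted in the sense of Definition \ref{def:adapted}, since the value on $(t^n_k,t^n_{k+1}]$ is only $\F^{\xi,B,W,\mu}_{t^n_{k+1}}$-measurable; (ii) you match only the marginal conditional law of each $\Lambda^n_{t^n_k}$ given $\F^{\xi,B,W,\mu}_{t^n_k}$, ignoring the dependence among the $\Lambda^n_{t^n_j}$'s themselves.

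The paper avoids all of this by never attempting an exact realization. It invokes a density result (for nonatomic base measure and convex target $\P(A)$, the measures $\mu(dx)\delta_{\phi(x)}(dy)$ with $\phi$ continuous are dense among all measures with first marginal $\mu$) to \emph{approximate in law}, producing continuous $\F^{\xi,B,W,\mu}_{t_i}$-measurable maps $a^j_i$ directly---no future increments are touched, so adaptedness is automatic. Compatibility is then used, not to manufacture independent randomness, but to lift convergence of $(S^{t_i},a^j_i)$-laws to convergence of $(S,a^j_i)$-laws. The joint structure across time steps is recovered by an induction in which $a^k_{i+1}$ is built as a continuous function of $(S,a^{j_k}_1(S),\ldots,a^{j_k}_i(S))$. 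Your Stage 3 mollification in the $\mu$-variable on $\P^p(\X)$ is also too sketchy to evaluate, but the Stage 2 defect is already decisive.
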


We also need continuity lemmas, the proofs of which are deferred to the end of the subsection. Notice that these lemmas are stated under assumption \ref{assumption:A}, not \ref{assumption:B}.

\begin{lemma} \label{le:rccontinuous}
Suppose a set $K \subset \P^p_{c}[(\Omega_{0},\W_{\lambda}) \leadsto \P^p(\X) \times \V]$ satisfies
\[
\sup_{P \in K}\E^P\left[\int_{\C^d}\|x\|_T^{p'}\mu^x(dx) + \int_0^T\int_A|a|^{p'}\Lambda_t(da)dt\right] < \infty.
\]
Under assumption \ref{assumption:A}, the map $\RC : K  \rightarrow \P^p(\Omega)$ 
(that maps $Q \in K$ to the law of the solution $(\xi,B,W,\mu,\Lambda,X)$ of \eqref{def:SDE} when $(\xi,B,W,\mu,\Lambda)$ has law $Q$) is continuous.
\end{lemma}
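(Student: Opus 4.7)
The plan is to take a convergent sequence $Q_n \to Q$ in $K$ (in the $p$-Wasserstein topology on $\P^p(\Omega_0 \times \P^p(\X) \times \V)$) and show that the laws $P_n := \RC(Q_n)$ converge to $\RC(Q)$ in $\P^p(\Omega)$. The task splits naturally into two parts: weak convergence of $P_n$ to $\RC(Q)$, followed by an upgrade from weak convergence to $p$-Wasserstein convergence. For the latter, I plan to exploit the uniform $p'$-moment bound assumed on $K$ (with $p' > p$) to obtain uniform $p$-integrability of the $X$-component, so by Proposition \ref{pr:wasserstein} weak convergence plus uniform $p$-integrability suffices.

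First I would establish tightness of $\{P_n\}$ in $\P(\Omega)$. The marginal of $P_n$ on $\Omega_0 \times \P^p(\X) \times \V$ is exactly $Q_n$, which is tight since $Q_n \to Q$. For the $X$-marginal, I would apply Lemma \ref{le:stateestimate} with $\gamma = p'$, combined with the uniform bound assumed on $K$, to get
\begin{equation*}
\sup_n \E^{P_n}\|X\|_T^{p'} \le c_4 \sup_n \E^{P_n}\Bigl[1 + \int_{\C^d}\|z\|_T^{p'}\mu^x(dz) + \int_0^T\!\!\int_A|a|^{p'}\Lambda_t(da)\,dt\Bigr] < \infty.
\end{equation*}
Tightness of $\{P_n \circ X^{-1}\}$ then follows from Aldous' criterion applied to the SDE (Proposition \ref{pr:itocompact}), using the linear growth hypotheses (A.4) on $b,\sigma,\sigma_0$ together with the uniform $p'$-moment bound on both $\|X\|_T$ and $\mu^x$. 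This uniform bound also secures uniform $p$-integrability of $\|X\|_T^p$, which is what I need at the end.

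Next I would identify any subsequential limit. Extract a subsequence $P_{n_k} \to P^\star$ weakly in $\P(\Omega)$. Since weak convergence is preserved under continuous projections and $Q_n \to Q$, we get $P^\star \circ (\xi,B,W,\mu,\Lambda)^{-1} = Q$. The state equation \eqref{def:SDE} holds under each $P_n$, and the coefficients $(t,x,\nu,a) \mapsto (b,\sigma,\sigma_0)$ are continuous in $(x,\nu,a)$ by (A.3) and Lipschitz in $x$ by (A.4). Applying the stability result of Kurtz and Protter \cite{kurtzprotter-weakconvergence} (exactly as was done in the second step of the proof of Theorem \ref{th:approxexistence}), the SDE passes to the limit, so $(\xi,B,W,\mu,\Lambda,X)$ satisfies \eqref{def:SDE} under $P^\star$. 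Because $Q \in \P^p_c[(\Omega_0,\W_\lambda) \leadsto \P^p(\X) \times \V]$, the pair $(B,W)$ is still a Wiener process with respect to the natural filtration under $P^\star$, so the uniqueness clause of Lemma \ref{le:stateestimate} forces $P^\star = \RC(Q)$. As the limit does not depend on the subsequence, the full sequence converges weakly to $\RC(Q)$. Finally, the uniform $p$-integrability of $\|X\|_T^p$ (plus of course the $p$-integrability already embedded in $Q_n \to Q$ in the Wasserstein sense for the other coordinates) upgrades weak convergence to $\ell_{\Omega,p}$-convergence via Proposition \ref{pr:wasserstein}.

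The main obstacle I expect is the careful invocation of the Kurtz--Protter stability theorem in the presence of a random coefficient field $(\mu^x_t)_{t \in [0,T]}$: one must check that the integrand sequences $b(t,X^{n}_t,\mu^{n,x}_t,\cdot)$ and $(\sigma,\sigma_0)(t,X^n_t,\mu^{n,x}_t)$ converge in the appropriate sense (e.g.\ uniformly on compact time intervals in probability, using joint continuity and the uniform $p'$-moment bound to justify dominated-convergence-type arguments) while the driving pair $(B^n,W^n)$ converges weakly. Passing a Skorohod representation through and verifying that the stochastic integrals converge requires the uniform moment bound to dominate the $p_\sigma$-growth in (A.4); here it is essential that $p_\sigma \le 2 \le p' \wedge \text{(exponent controlling $\|X\|_T$)}$, so the diffusion integrand has a uniformly integrable quadratic variation and the limiting martingale property holds. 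Once this technicality is handled, everything else is standard.
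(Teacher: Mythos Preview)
Your proposal is correct and follows essentially the same approach as the paper: establish relative compactness of $\RC(Q_n)$ in $\P^p(\Omega)$ via Aldous' criterion (Proposition \ref{pr:itocompact}) for the $X$-marginal together with convergence of the $(\xi,B,W,\mu,\Lambda)$-marginals $Q_n$, then identify any limit point as $\RC(Q)$ by invoking Kurtz--Protter stability and the strong uniqueness of the SDE. The only cosmetic difference is that the paper obtains relative compactness directly in $\P^p(\Omega)$ from Proposition \ref{pr:itocompact} (which already delivers the $\P^p$ topology), whereas you split this into weak tightness plus a separate uniform $p$-integrability upgrade; these are equivalent formulations of the same argument.
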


\begin{lemma} \label{le:jcontinuous}
Under assumption \ref{assumption:A}, 
the expected reward functional $J : \P^p(\Omega) \rightarrow \R$ given by \eqref{eq:cost:functional}
is upper semicontinuous. 
If also $A$ is compact, then $J$ is continuous.
\end{lemma}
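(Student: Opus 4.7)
The plan is to decompose $\Gamma$ using the one-sided bound in (A.5), separating a continuous part with $p$-order growth from a nonnegative remainder whose integral will be only lower semicontinuous in general.

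From (A.5), $f(t,x,\mu,a) \le H(t,x,\mu) := c_2(1 + |x|^p + \int_{\R^d}|z|^p \mu(dz))$, and hence $\tilde F(t,x,\mu,a) := H(t,x,\mu) - f(t,x,\mu,a) \ge 0$ is jointly measurable in $t$ and continuous in $(x,\mu,a)$ by (A.3). Since $H$ does not depend on $a$ and $q_t$ is a probability measure on $A$,
\[
\Gamma(\mu,q,x) = g(x_T,\mu_T) + \int_0^T H(t,x_t,\mu_t) \, dt - \int_0^T\int_A \tilde F(t,x_t,\mu_t,a) \, q_t(da) \, dt.
\]
I split $J(P) = J_H(P) - J_{\tilde F}(P)$ accordingly. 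The integrand of $J_H$ depends only on $(X,\mu^x)$ and is continuous on $\C^d \times \P^p(\C^d)$ with $p$-order growth (from (A.5) for $g$ and the definition of $H$, using $\sup_{t \in [0,T]}\int|z|^p\mu^x_t(dz) \le \int\|z\|_T^p\mu^x(dz)$). The same Lemma \ref{le:componentwise} argument already used to prove Lemma \ref{le:usc} then yields continuity of $J_H$ on $\P^p(\Omega)$.

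For $J_{\tilde F}$, I would truncate: for each $N \ge 0$, set $\tilde F_N := \tilde F \wedge N$, which is bounded, jointly measurable in $t$, and continuous in $(x,\mu,a)$. Via Skorokhod representation in $\P^p(\Omega)$ combined with the classical stability of relaxed-control integration against Carath\'eodory integrands (as in \cite{jacodmemin-stable}, which is exactly the technology already invoked for Lemma \ref{le:usc}), the map $P \mapsto \E^P\bigl[\int_0^T\int_A \tilde F_N(t,X_t,\mu^x_t,a) \Lambda_t(da) \, dt\bigr]$ is continuous on $\P^p(\Omega)$. Monotone convergence as $N \uparrow \infty$ then exhibits $J_{\tilde F}$ as a pointwise supremum of continuous functions, hence lower semicontinuous. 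Consequently $J = J_H - J_{\tilde F}$ is upper semicontinuous, proving the first claim.

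When $A$ is compact, $M_A := \sup_{a \in A}|a|^{p'} < \infty$, and the two-sided bound in (A.5) upgrades $\tilde F$ to $0 \le \tilde F(t,x,\mu,a) \le 2c_2(1 + |x|^p + \int_{\R^d}|z|^p\mu(dz)) + c_2 M_A$. Since $\int_A \Lambda_t(da) = 1$, the integrand of $J_{\tilde F}$ then has $p$-order growth in $(X,\mu^x)$ alone, and Lemma \ref{le:componentwise} upgrades $J_{\tilde F}$ from lsc to continuous, yielding continuity of $J$. The main obstacle is the continuity of $P \mapsto \E^P[\int_0^T\int_A \tilde F_N \Lambda_t(da)\,dt]$ for the truncated integrand: because the $(x,\mu)$-dependence of the Carath\'eodory integrand varies along the approximating sequence, one must either invoke stable convergence directly or pass first to a Skorokhod coupling and justify the limit under bounded dominated convergence together with continuity in the spatial variables. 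Once this technical step is secured, the rest is bookkeeping via Fatou and monotone convergence.
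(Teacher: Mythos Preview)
Your proof is correct and is essentially the same argument as the paper's, just unpacked one level. The paper applies Lemma~\ref{le:usc} directly to conclude that $\Gamma$ is upper semicontinuous on $\P^p(\C^d)\times\V\times\C^d$ (using only the one-sided upper bound on $f,g$) and then passes to $J$; you instead carry out the underlying decomposition $f = H - \tilde F$ with $\tilde F\ge 0$ at the level of $J$, show $J_H$ continuous and $J_{\tilde F}$ lower semicontinuous via truncation and monotone convergence. That decomposition is precisely what drives Lemma~\ref{le:usc}, and the ``main obstacle'' you identify---continuity of $P\mapsto \E^P\!\int\!\!\int \tilde F_N\,\Lambda_t(da)\,dt$ for the bounded truncation---is exactly the bounded case of Lemma~\ref{le:usc} combined with Skorokhod representation. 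One cosmetic point: in the continuous pieces (for $J_H$, and for $J_{\tilde F}$ when $A$ is compact) the relevant tool is Proposition~\ref{pr:wasserstein}(3) rather than Lemma~\ref{le:componentwise}, since your integrands are jointly continuous in all variables.
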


\begin{lemma} \label{le:pinuniform}
Define $\Pi_n : \P(\Omega) \rightarrow \P(\Omega)$
 by
\[
\Pi_n(P) := P \circ \left(\xi,B,W,\mu,\Lambda,\hat{X}^n \right)^{-1}.
\]
(See \eqref{eq:interpolation}
for the definition of $\hat{X}^n$.)
If $P_n \rightarrow P$ in $\P^p(\Omega)$, then $\Pi_n(P_n) \rightarrow P$ in $\P^p(\Omega)$. 
\end{lemma}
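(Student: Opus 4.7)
The plan is a triangle-inequality plus diagonal-coupling argument. Decompose
\begin{equation*}
\ell_{\Omega,p}(\Pi_n(P_n), P) \leq \ell_{\Omega,p}(\Pi_n(P_n), P_n) + \ell_{\Omega,p}(P_n, P),
\end{equation*}
where the second term vanishes by hypothesis. For the first term I will use the diagonal coupling induced by $P_n$: under $P_n$, the pair $\bigl((\xi,B,W,\mu,\Lambda,\hat{X}^n), (\xi,B,W,\mu,\Lambda,X)\bigr)$ couples $\Pi_n(P_n)$ with $P_n$, and since the first five coordinates coincide, with a componentwise $p$-product metric on $\Omega$ this gives
\begin{equation*}
\ell_{\Omega,p}(\Pi_n(P_n), P_n)^p \leq \E^{P_n} \|\hat{X}^n - X\|_T^p.
\end{equation*}
Thus everything reduces to showing that this quantity tends to $0$.

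The key input is the deterministic estimate already used in the second step of Lemma~\ref{le:limitpresolution}, namely
\begin{equation*}
\|\hat{x}^n - x\|_T \leq 2\sup_{|t-s|\le 2^{1-n}T}|x_t - x_s|, \quad x \in \C^d,
\end{equation*}
whose right-hand side vanishes as $n \to \infty$ uniformly on any compact (hence equicontinuous, by Arzel\`a--Ascoli) subset of $\C^d$. Since the marginal projection contracts the $p$-Wasserstein distance, $P_n \to P$ in $\P^p(\Omega)$ yields $P_n \circ X^{-1} \to P \circ X^{-1}$ in $\P^p(\C^d)$, and Skorokhod's representation theorem then supplies $\C^d$-valued random variables $Y_n, Y$ on a common probability space $(\widetilde{\Omega},\widetilde{\F},\widetilde{P})$ with $Y_n \sim P_n \circ X^{-1}$, $Y \sim P \circ X^{-1}$, and $Y_n \to Y$ $\widetilde{P}$-almost surely in $\C^d$. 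Almost surely the set $\{Y_n : n \geq 1\} \cup \{Y\}$ is compact in $\C^d$, hence equicontinuous, and the displayed bound gives $\|\widehat{Y_n}^n - Y_n\|_T \to 0$ $\widetilde{P}$-almost surely.

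To upgrade this to $L^p$ convergence I invoke uniform integrability. Convergence $P_n \to P$ in $\P^p(\Omega)$ is equivalent to weak convergence together with convergence of $p$-th moments, which forces uniform $p$-integrability of $\|X\|_T^p$ under $(P_n)$, hence of $\|Y_n\|_T^p$ under $\widetilde{P}$. Combined with the domination $\|\widehat{Y_n}^n - Y_n\|_T \leq 2\|Y_n\|_T$ and the almost-sure convergence above, Vitali's theorem delivers $\widetilde{\E}\|\widehat{Y_n}^n - Y_n\|_T^p \to 0$, which equals $\E^{P_n}\|\hat{X}^n - X\|_T^p$, closing the argument. The main delicate point is the equicontinuity step: it needs the \emph{joint} almost-sure convergence supplied by Skorokhod rather than convergence in law of the single marginals, because one must know that the sequence of realized paths themselves lies in a common compact set for each outcome.
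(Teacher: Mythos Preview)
Your proof is correct and follows the same strategy as the paper's: the triangle inequality, the diagonal coupling under $P_n$, and the key fact that $\hat{x}^n \to x$ uniformly on compact subsets of $\C^d$. The only difference is a minor technical choice in the last step: the paper bypasses Skorokhod by using relative compactness of $\{P_n\}$ in $\P^p(\Omega)$ to pick a single compact $K \subset \C^d$ with $\sup_n \E^{P_n}[\|X\|_T^p 1_{\{X \notin K\}}] \le \epsilon$ and then splits $\E^{P_n}\|X-\hat{X}^n\|_T^p$ over $K$ and $K^c$, whereas you reach the same conclusion via Skorokhod representation and Vitali.
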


\subsubsection{Proof of Theorem \ref{th:mainexistence-bounded}}
Let $\mu^n$, $\bar{\mu}^n$, $P_n$, and $\overline{P}_n$ be as in Lemma \ref{le:limitpresolution}, and let $\overline{P}$ denote any limit point. Relabel the subsequence, and assume that $\overline{P}_n$ itself converges. Let $\rho:= \overline{P} \circ (\xi,B,W,\mu)^{-1}$. By Lemma \ref{le:limitpresolution},  
$\rho \in \P^p_{c}[(\Omega_{0},\W_{\lambda}) \leadsto 
{\mathcal P}^p(\X)]$ and $\overline{P} \in \RC\A(\rho)$ is a MFG pre-solution, and it remains only to show that $\overline{P}$ is optimal, or $\overline{P} \in \RC\A^\star(\rho)$. Fix $P^\star \in \RC\A(\rho)$ arbitrarily with $J(P^\star) > -\infty$. Let
\begin{align*}
Q^\star &:= P^\star \circ (\xi,B,W,\mu,\Lambda)^{-1}.
\end{align*}
By Lemma \ref{le:adapteddensity}, we may find a sequence of $(\F^{\xi,B,W,\mu}_t)_{t \in [0,T]}$-adapted continuous functions $\phi_k : \Omega_0 \times \P^p(\X) \rightarrow \V$ such that 
\begin{align*}
Q^\star = \lim_{k\rightarrow\infty}Q^k, \text{ where } \quad Q^k := \rho \circ (\xi,B,W,\mu,\phi_k(\xi,B,W,\mu))^{-1}.
\end{align*}
Define $Q^k_n \in \A_f$ 
(see Paragraph 
\ref{subsubse:control} for the definition of $\A_{f}$)
by
\[
Q^k_n := \W_\lambda \circ \bigl(\xi,B,W,\phi_k(\xi,B,W,\bar{\mu}^n(B))\bigr)^{-1}.
\]
Note that $\overline{P}_n \rightarrow \overline{P}$ implies
\[
\rho = 
\lim_{n \rightarrow \infty} \overline{P}_{n}
\circ (\xi,B,W,\mu)^{-1}
=\lim_{n\rightarrow\infty}\W_\lambda \circ \bigl(\xi,B,W,\bar{\mu}^n(B)\bigr)^{-1},
\]
where the second equality comes from the definition of $\overline{P}_{n}$ in Lemma \ref{le:limitpresolution}.
Since $\phi_k$ is continuous with respect to $\mu$,
we deduce from Lemma \ref{le:componentwise} (that permits to handle the possible dicontinuity
of $\phi_{k}$ in the other variables):
\begin{align}
\lim_{n\rightarrow\infty}Q^k_n \circ (\xi,B,W,\bar{\mu}^n(B),\Lambda)^{-1} &= \lim_{n\rightarrow\infty}\W_\lambda \circ (\xi,B,W,\bar{\mu}^n(B),\phi_k(\xi,B,W,\bar{\mu}^n(B)))^{-1}
	= Q^k. \label{pf:limitsolution0}
\end{align}
Now let $P^k_n := \RC_f^n(\mu^n,Q^k_n)$. Since $P_n$ is optimal for $J_f(\mu^n,\cdot)$, 
\begin{align*}
J_f(\mu^n,P^k_n) \le J_f(\mu^n,P_n).
\end{align*}
Since $A$ is compact, Lemma \ref{le:jcontinuous} assures us that $J$ is continuous, and so
\begin{align*}
\lim_{n\rightarrow\infty}J_f(\mu^n,P_n) &= 
\lim_{n\rightarrow\infty}\E^{P_n}\left[\Gamma(\mu^n(B),\Lambda,X) \right] = 
\lim_{n\rightarrow\infty}J (\overline{P}_{n}) = J(\overline{P}),
\end{align*}
where the second equality follows simply from the definition of $J$.
We will complete the proof by showing that, on the other hand,
\begin{align}
J(P^\star) = \lim_{k\rightarrow\infty}\lim_{n\rightarrow\infty}J_f(\mu^n,P^k_n), \label{pf:limitsolution1}
\end{align}
and both limits exist; indeed, this implies $J(\overline{P}) \ge J(P^\star)$, completing the proof since $P^\star \in \RC\A(\rho)$ was arbitrary. Define $\Pi_n$ as in Lemma \ref{le:pinuniform}. 
The trick is to notice (just applying the basic definition of the different objects) that 
\begin{equation*}
\begin{split}
&P_{n}^k \circ (\xi,B,W,\bar{\mu}^n(B),\Lambda,X)^{-1} = \Pi_n\left(\RC\left(Q_{n}^k 
\circ (\xi,B,W,\bar{\mu}^n(B),\Lambda)^{-1}\right)\right), 
\\
&J_f(\mu^n,P_{n}^k) = J\left(P_{n}^k \circ (\xi,B,W,\bar{\mu}^n(B),\Lambda,X)^{-1}\right).
\end{split}
\end{equation*}
Now note that $P^k_n \circ (\mu^x)^{-1} = \W_\lambda \circ (\mu^n)^{-1} = \overline{P}_n \circ (\mu^x)^{-1}$, and thus by \eqref{pf:exist001} we have
\[
\sup_n\E^{P^k_n}\int_{\C^d}\|x\|_T^{p'}\mu^x(dx) < \infty.
\]
Since also $A$ is compact,
we may apply Lemma \ref{le:rccontinuous} (continuity of $\RC$), along with Lemma \ref{le:pinuniform} and \eqref{pf:limitsolution0}, to get
\begin{align*}
\lim_{n\rightarrow\infty}P^k_n \circ (\xi,B,W,\bar{\mu}^n(B),\Lambda,X)^{-1} &= \lim_{n\rightarrow\infty}\Pi_n\left(\RC\left(Q^k_n \circ (\xi,B,W,\bar{\mu}^n(B),\Lambda)^{-1}\right) \right)
= \RC(Q^k).
\end{align*}
Thus, again using continuity of $\RC$,
\begin{align*}
\lim_{k\rightarrow\infty}\lim_{n\rightarrow\infty}P^k_n \circ (\xi,B,W,\bar{\mu}^n(B),\Lambda,X)^{-1} = \RC(Q^\star) = P^\star.
\end{align*}
Finally, \eqref{pf:limitsolution1} follows from continuity of $J$.

\subsubsection{Proof of Lemma \ref{le:rccontinuous}}
Let $Q_n \rightarrow Q$ in $K$. Note that $\RC(Q_n) \circ (X_0,B,W,\mu,\Lambda)^{-1} = Q_n$ are relatively compact in $\P^p(\Omega_0 \times \P^p(\X) \times \V)$. It can be shown using Aldous' criterion (see Proposition \ref{pr:itocompact}) that this implies that $\RC(Q_n) \circ X^{-1}$ are relatively compact in $\P^p(\C^d)$, and thus $\RC(Q_n)$ are relatively compact in $\P^p(\Omega)$. Let $P$ be any limit point, so $\RC(Q_{n_k}) \rightarrow P$ for some $n_k$. Then
\[
P \circ (\xi,B,W,\mu,\Lambda)^{-1} = \lim_{k\rightarrow\infty}\RC(Q_{n_k}) \circ (\xi,B,W,\mu,\Lambda)^{-1} = \lim_{k\rightarrow\infty}Q_k = Q.
\]
It follows from the results of Kurtz and Protter \cite{kurtzprotter-weakconvergence} that the canonical processes verify the SDE \eqref{def:SDE} under $P$. Hence, $P = \RC(Q)$.

\subsubsection{Proof of Lemma \ref{le:jcontinuous}}
Since $f$ and $g$ are continuous in $(x,\mu,a)$, the upper bounds of $f$ and $g$ (which grow in order $p$ in $(x,\mu)$) along with Lemma \ref{le:usc} imply both that $\Gamma$ is upper semicontinuous and then also that $J$ is upper semicontinuous from $\P^p(\Omega)$ to $\R$. 
If $A$ is compact, then the same $p$-order upper bounds of $f$ and $g$ hold for the negative parts as well, and the second part of Lemma \ref{le:usc} provides the claimed continuity.
\hfill \qedsymbol

\subsubsection{Proof of Lemma \ref{le:pinuniform}}
This was essentially already proven in the second step of the proof of Lemma \ref{le:limitpresolution}.
Note that
\[
\ell_{\Omega,p}\left(\Pi_n(P_{n}),P\right)  \le \ell_{\Omega,p}\left(P_{n},P\right)
 + \ell_{\Omega,p}\left(P_n,\Pi_{n}(P_n)\right).
 \]
The first term tends to zero by assumption. Fix $\epsilon > 0$. Since $\{P_n : n \ge 1\}$ is relatively compact in $\P^p(\Omega)$, by Prohorov's theorem there exists a compact set $K \subset \C^d$ such that $\E^{P_n}[\|X\|_T^p1_{\{X \notin K\}}] \le \epsilon$ for all $n$. Use the obvious coupling and the fact that $\|\hat{x}^n\|_T \le \|x\|_T$ for all $x \in \C^d$ to get
\begin{align*}
\ell_{\Omega,p}\left(P_n,\Pi_{n}(P_n)\right) &\le \E^{P_n}\left[\|X- \hat{X}^n\|_T^p\right]^{1/p} \le (\epsilon 2^{p-1})^{1/p} + \sup_{x \in K}\|x - \hat{x}^n\|_T.
\end{align*}
We saw in the second step of the proof of Lemma \ref{le:limitpresolution} that $\hat{x}^n \rightarrow x$ as $n\rightarrow\infty$ \emph{uniformly} on compact subsets of $\C^d$, and so the proof is complete.

\subsection{Unbounded coefficients} \label{se:unboundedcoefficients}
Finally, with existence in hand for bounded state coefficients ($b$, $\sigma$, $\sigma_0$) and compact control space $A$, we turn to the general case. The goal is thus to complete the proof of Theorem \ref{th:existence}
under \ref{assumption:A} instead of \ref{assumption:B}.

The idea of the proof is to approximate the data $(b,\sigma,\sigma_0,A)$ by data satisfying Assumption \ref{assumption:B}. Let $(b^n,\sigma^n,\sigma^n_0)$ denote the projection of $(b,\sigma,\sigma_0)$ into the ball centered at the origin with radius $n$ in $\R^d \times \R^{d \times m} \times \R^{d \times m_0}$, respectively. Let $A_n$ denote the intersection of $A$ with the ball centered at the origin with radius $n$. For sufficiently large $n_0$, $A_n$ is nonempty and compact for all $n \ge n_0$, and thus we will always assume $n \ge n_0$ in what follows. Note that the data $(b^n,\sigma^n,\sigma^n_0,f,g,A_n)$ satisfy Assumption \ref{assumption:B}. Moreover, (A.4) and (A.5) hold for each $n$ \emph{with the same constants} $c_1,c_2,c_3$; this implies that Lemma \ref{le:stateestimate} holds with the same constant $c_4$ for each set of data, i.e. independent of $n$.

Define $\V_n$ as before in terms of $A_n$, but now view it as a subset of $\V$. That is, 
$\V_n := \{q \in \V : q([0,T] \times A_n^c)=0\}$.
Naturally, define $\A_n(\rho)$ to be the set of admissible controls with values in $A_n$:
\begin{equation}
\label{eq:Anrho}
\A_n(\rho) := \left\{Q \in \A(\rho) : Q(\Lambda \in \V_n) = 1\right\}.
\end{equation}
Finally, define $\RC_n(Q)$ to be the unique element $P$ of $\P(\Omega)$ such that $P \circ (\xi,B,W,\mu,\Lambda)^{-1} = Q$ and the canonical processes verify the SDE
\begin{equation}
\label{eq:SDE:truncated}
X_t = X_0 + \int_0^tds\int_A\Lambda_s(da)b^n(s,X_s,\mu^x_s,a) + \int_0^t\sigma^n(s,X_s,\mu^x_s)dW_s + \int_0^t\sigma^n_0(s,X_s,\mu^x_s)dB_s.
\end{equation}
Define naturally
\[
\RC_n\A^\star_n(\rho) := \arg\max_{P \in \RC_n\A_n(\rho)}J(P).
\]
By Theorem \ref{th:mainexistence-bounded}, there exists for each $n$ a MFG solution corresponding to the $n^\text{th}$ truncation of the data. In the present notation, this means there exist $\rho_n \in 
\P^p_{c}[(\Omega_{0},\W_{\lambda}) \leadsto 
{\mathcal P}^p(\X)]$ and $P_n \in \RC_n\A^\star_n(\rho_n)$ such that 
\begin{align}
\mu = P_n\left((W,\Lambda,X) \in \cdot \ | \ \F^{B,\mu}_T\right), \ P^n-a.s. \label{def:nfixedpoint}
\end{align}
Once again, the strategy of the proof is to show first that $P_n$ are relatively compact and then that each limit point is a MFG solution.

\subsubsection{Relative compactness} We start with
\begin{lemma} \label{le:moments}
The measures $P_n$ are relatively compact in $\P^p(\Omega)$. Moreover,
\begin{align}
\sup_n\E^{P_n}\int_0^T \int_{A} |a|^{p'} \Lambda_t(da) dt < \infty, \quad
\E^{P_n} \int_{\C^d} \|x\|_T^{p'} \mu^x(dx) = \sup_n\E^{P_n}\|X\|_T^{p'} < \infty. \label{pf:unbounded3}
\end{align}
\end{lemma}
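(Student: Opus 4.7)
The plan is to first establish the $p'$-moment bound on the control by exploiting the optimality of $P_n$ against a trivial benchmark and then derive the rest from Lemma~\ref{le:stateestimate} (whose constant $c_4$ is independent of $n$, since (A.4) holds uniformly along the truncation).

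Fix some $a_0 \in A_{n_0}$, which lies in $A_n$ for all $n \ge n_0$. Define the constant-control rule $Q_n^0 := \rho_n \otimes \delta_{dt\,\delta_{a_0}(da)} \in \A_n(\rho_n)$ and set $P_n^0 := \RC_n(Q_n^0) \in \RC_n\A_n(\rho_n)$. Optimality of $P_n$ gives $J(P_n^0) \le J(P_n)$. On the right hand side, the $f$-upper bound from (A.5), together with the growth bound on $g$, yields
\begin{align*}
J(P_n) &\le C\,\E^{P_n}\Bigl[1 + \|X\|_T^p + \int_{\C^d}\|z\|_T^p\,\mu^x(dz)\Bigr] - c_3\,\E^{P_n}\!\int_0^T\!\!\int_A |a|^{p'}\,\Lambda_t(da)\,dt.
\end{align*}
Because $\mu$ has the same marginal under $P_n$ and $P_n^0$ (both equal $\rho_n\circ\mu^{-1}$), and because the fixed point~\eqref{def:nfixedpoint} together with the tower property gives $\E^{P_n}\!\!\int\|z\|_T^p\mu^x(dz)=\E^{P_n}\|X\|_T^p$, the first claim of Lemma~\ref{le:stateestimate} applied to $P_n^0$ with $\gamma = p$ bounds $\E^{P_n^0}\|X\|_T^p$ by $c_4(1+\E^{P_n}\|X\|_T^p+T|a_0|^p)$. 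Combining this with the lower bounds from (A.5) for $f$ and $g$ produces a constant $C'$ (depending on $a_0$, $T$, $c_1,c_2,c_3,c_4$) such that $J(P_n^0) \ge -C'(1+\E^{P_n}\|X\|_T^p)$.

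Chaining the two inequalities gives
\begin{equation*}
c_3\,\E^{P_n}\!\int_0^T\!\!\int_A |a|^{p'}\Lambda_t(da)\,dt \;\le\; C''\bigl(1+\E^{P_n}\|X\|_T^p\bigr).
\end{equation*}
The second claim of Lemma~\ref{le:stateestimate}, applied to $P_n$ with $\gamma = p$ (valid because the fixed point~\eqref{def:nfixedpoint} gives $\mu^x(\cdot) = P_n(X\in \cdot\,|\,B,\mu)$, and the proof only uses $\E\int\|z\|_s^p\mu^x(dz) = \E\|X\|_s^p$), yields $\E^{P_n}\|X\|_T^p \le c_4(1+\E^{P_n}\!\int\!\!\int|a|^p\Lambda_t(da)dt)$. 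Since $p<p'$, H\"older in $(t,a)$ gives $\E^{P_n}\!\int\!\!\int|a|^p \le T^{1-p/p'}(\E^{P_n}\!\int\!\!\int|a|^{p'})^{p/p'}$. Writing $Y_n := \E^{P_n}\!\int_0^T\!\!\int_A|a|^{p'}\Lambda_t(da)dt$, we obtain $c_3 Y_n \le C'''(1 + Y_n^{p/p'})$, and since $p/p'<1$ this forces $\sup_n Y_n <\infty$. A second application of Lemma~\ref{le:stateestimate} with $\gamma = p'$ then yields the $p'$-moment bound on $\|X\|_T$, while the equality $\E^{P_n}\!\int\|x\|_T^{p'}\mu^x(dx) = \E^{P_n}\|X\|_T^{p'}$ is again the tower property.

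For relative compactness in $\P^p(\Omega)$, we check each marginal separately and apply Lemma~\ref{le:productrelcompactness}. The $(\xi,B,W)$-marginal is the fixed $\W_\lambda$. The uniform $p'$-moment bound on $\Lambda$ gives tightness of $P_n\circ\Lambda^{-1}$ in $\P^p(\V)$ and the additional uniform integrability for convergence in the $p$-Wasserstein distance (via Proposition~\ref{pr:wasserstein}). For $P_n\circ X^{-1}$, Aldous's criterion applies since (A.4) together with the uniform control and state moment bounds yields uniform bounds on the drift and diffusion coefficients in any $L^{p'}$ sense; see Proposition~\ref{pr:itocompact}. For $P_n\circ\mu^{-1}$, Proposition~\ref{pr:ptight} reduces relative compactness in $\P^p(\P^p(\X))$ to relative compactness of the mean measures $P_n\circ(W,\Lambda,X)^{-1}$, just proved, plus a uniform $p'$-moment bound on $\mu$ itself, which follows from the fixed point condition exactly as in the first step of the proof of Lemma~\ref{le:limitpresolution}. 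The main obstacle is the closing argument of the first bootstrap step, where one must use $p'>p$ via H\"older to absorb the control moment on the right of the state estimate back into the one being bounded.
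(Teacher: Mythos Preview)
Your argument is correct and follows essentially the same route as the paper: compare the optimal reward $J(P_n)$ against that of a constant control, feed the upper and lower bounds from (A.5) through the state estimate of Lemma~\ref{le:stateestimate} (whose constant is uniform in $n$), and close the resulting inequality using $p'>p$. The only cosmetic difference is the closing step: the paper keeps both sides in terms of $\E^{P_n}\!\int\!\!\int|a|^p$ and arrives at $\E^{P_n}\!\int\!\!\int(|a|^{p'}+\kappa_1|a|^p)\le\kappa_2$, while you substitute via H\"older/Jensen to reach $Y_n\le C(1+Y_n^{p/p'})$; both are equivalent uses of $p'>p$.
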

\begin{proof}
Noting that the coefficients $(b^n,\sigma^n,\sigma^n_0)$ satisfy (A.1-5) with the same constants (independent of $n$), Lemma \ref{le:stateestimate} and \eqref{def:nfixedpoint} imply
\begin{align}
\E^{P_n} \int_{\C^d} \|x\|_T^p \mu^x(dx) = \E^{P_n}\|X\|^p_T \le c_4\biggl(1 + \E^{P_n}\int_0^T
\int_{A}|a|^p\Lambda_t(da) dt\biggr). \label{pf:unbounded1}
\end{align}
Fix $a_0 \in A_{n_0}$. Let $R_n$ denote the unique element of $\RC_n\A_n(\rho_n)$ satisfying $R_n(\Lambda_t = \delta_{a_0} \text{ for a.e. } t) = 1$. That is $R_n$ is the law of the solution of the state equation arising from the constant control equal to $a_0$, in the $n^\text{th}$ truncation. The first part of Lemma \ref{le:stateestimate} implies
\begin{align}
\E^{R_n}\|X\|^p_T &\le c_4\biggl(1 + \E^{R_n} \int_{\C^d}\|y\|_T^p \mu^x(dy) + T|a_0|^{p}\biggr). \label{pf:unbounded2}
\end{align}
Noting that $R_n \circ \mu^{-1} = P_n \circ \mu^{-1}$, we combine \eqref{pf:unbounded2} with \eqref{pf:unbounded1} to get
\begin{align}
\E^{R_n}\|X\|^p_T &\le C_0\biggl(1 + \E^{P_n}\int_0^T \int_{A}
|a|^p \Lambda_t(da) dt \biggr), \label{pf:unbounded2.5}
\end{align}
where $C_0 > 0$ depends only on $c_4$, $T$, and $|a_0|^p$. Use the optimality of $P_n$, the lower bounds on $f$ and $g$, and then \eqref{pf:unbounded1} and \eqref{pf:unbounded2.5} to get
\begin{align}
J(P_n) &\ge J(R_n) \ge -c_2(T+1)
\biggl(1 + \E^{R_n}\|X\|_T^p + \E^{R_n} \int_{\C^d}\|y\|_T^p \mu^x(dy) + |a_0|^{p'}\biggr) 
\nonumber \\
	&\ge -C_1\biggl(1 + \E^{P_n}\int_0^T \int_{A}|a|^p \Lambda_t(da) dt\biggr), \label{pf:unbounded2-1}
\end{align}
where $C_1 > 0$ depends only on $c_2$, $c_4$, $T$, $|a_0|^{p'}$, and $C_0$. On the other hand, we may use the upper bounds on $f$ and $g$ along with \eqref{pf:unbounded1} to get
\begin{align}
J(P_n) &\le c_2(T+1)\biggl(1 + \E^{P_n}\|X\|_T^p + \E^{P_n} \int_{\C^d} \|y\|^p_T \mu^x(dy) \biggr) - c_3\E^{P_n}\int_0^T \int_{A}|a|^{p'} \Lambda_t(da) dt \nonumber \\
	&\le C_2\biggl(1 + \E^{P_n}\int_0^T \int_{A} |a|^p \Lambda_t(da) dt\biggr) - c_3\E^{P_n}
	\int_0^T \int_{A}|a|^{p'}\Lambda_t(da) dt, \label{pf:unbounded2-2}
\end{align}
where $C_2 > 0$ depends only on $c_2$, $c_3$, $c_4$, and $T$. Combining \eqref{pf:unbounded2-1} and \eqref{pf:unbounded2-2} and rearranging, we find two constants, $\kappa_1 \in \R$ and $\kappa_2 > 0$, such that
\[
\E^{P_n}\int_0^T \int_{A} \bigl( |a|^{p'} + \kappa_{1} |a|^p \bigr) \Lambda_{t}(da) dt\le \kappa_2.
\]
(Note that $\E^{P_n}\int_0^T \int_{A}|a|^p \Lambda_{t}(da) dt < \infty$ for each $n$.) These constants are independent of $n$, and
the first bound in \eqref{pf:unbounded3} follows from the fact that $p'>p$. Combined with Lemma \ref{le:stateestimate}, this implies the second bound in \eqref{pf:unbounded3}.

To show that $P_n$ are relatively compact, we check that each of the sets of marginals is relatively compact; see Lemma \ref{le:productrelcompactness}. Compactness of $P_n \circ (B,W)^{-1}$ is obvious. Moreover, 
by \eqref{pf:unbounded3},
\[
\sup_n\E^{P_n}\biggl[\|W\|^{p'}_T + \int_0^T \int_{A}|a|^{p'} \Lambda_{t}(da) dt + \|X\|^{p'}_T\biggr] < \infty.
\]
Aldous' criterion (Proposition \ref{pr:itocompact}) shows that $P_n \circ (\Lambda,X)^{-1}$ are relatively compact. The mean measures of $P_n \circ \mu^{-1}$ are $P_n \circ (W,\Lambda,X)^{-1}$, which we have shown are relatively compact. Hence, by Proposition \ref{pr:ptight}, $P_n \circ \mu^{-1}$ are relatively compact in $\P^p(\P^p(\X))$.
\end{proof}

\subsubsection{Limit points}
Now that we know $P_n$ are relatively compact, we may fix $P \in \P^p(\Omega)$ and a subsequence $n_k$ such that $P_{n_k} \rightarrow P$ in $\P^p(\Omega)$. Define $\rho := P \circ (\xi,B,W,\mu)^{-1}$, and note that $\rho_{n_k} \rightarrow \rho$.

\begin{lemma} \label{le:limitpoint}
The limit point $P$ is a MFG pre-solution and satisfies
\begin{align*}
\E^P\int_0^T \int_{A}|a|^{p'} \Lambda_t(da) dt \le \liminf_{k\rightarrow\infty}\E^{P_{n_k}}
\int_0^T \int_{A}|a|^{p'} \Lambda_t(da) dt < \infty.
\end{align*}
\end{lemma}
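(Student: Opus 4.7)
\emph{Moment bound.} Since $(t,a) \mapsto |a|^{p'}$ is nonnegative and continuous on $[0,T] \times A$, the functional
\[
\P^p(\Omega) \ni Q \mapsto \E^Q \int_0^T \int_A |a|^{p'} \Lambda_t(da)\, dt
\]
is lower semicontinuous (by Portmanteau, applied to the nonnegative continuous function $q \mapsto \int_{[0,T] \times A} |a|^{p'} q(dt,da)$ on $\V$). Applying this to $P_{n_k} \to P$ and invoking the uniform bound of Lemma \ref{le:moments} yields the claimed liminf inequality, together with its finiteness.

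\emph{Pre-solution properties \textup{(1)} and \textup{(3)}.} By Theorem \ref{th:mainexistence-bounded} applied to the $n$-truncated data, each $P_n$ is itself a weak MFG solution of the truncated game, hence in particular a pre-solution of it. Independence of $(B,\mu)$, $\xi$, $W$ under $P_n$ is preserved by the weak limit, giving property (1) under $P$. For the fixed-point condition (3), I would mimic the computation \eqref{pf:exist2} from the proof of Lemma \ref{le:limitpresolution}: for bounded continuous test functions $\phi_1$ on $\C^{m_0}$, $\phi_2$ on $\P^p(\X)$, and $\phi_3$ on $\X$,
\[
\E^{P_{n_k}}\bigl[\phi_1(B)\phi_2(\mu)\phi_3(W,\Lambda,X)\bigr] = \E^{P_{n_k}}\Bigl[\phi_1(B)\phi_2(\mu) \int \phi_3 \, d\mu\Bigr],
\]
and pass to the limit $k \to \infty$, using Lemma \ref{le:componentwise} and the uniform $p$-moment bounds of Lemma \ref{le:moments} to handle the polynomial growth when $\phi_3$ is replaced by unbounded functions. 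The resulting identity under $P$ yields $\mu = P((W,\Lambda,X) \in \cdot \mid B, \mu)$ almost surely.

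\emph{Pre-solution property \textup{(2)}.} This is the delicate part. First, $P \circ (\xi,B,W)^{-1} = \W_\lambda$ is clearly preserved, and the martingale characterization of Brownian motion combined with $\P^p$-convergence of $P_{n_k}$ gives that $B$ and $W$ remain independent Wiener processes under $P$ with respect to the full canonical filtration $\F^{\xi,B,W,\mu,\Lambda,X}_t$. Combined with (1), (3), and $P(X_0 = \xi) = 1$ inherited from the $P_n$, Lemma \ref{le:a.s.compatible} then delivers both the compatibility $\rho \in \P^p_c[(\Omega_0, \W_\lambda) \leadsto \P^p(\X)]$ and the admissibility $Q := P \circ (\xi,B,W,\mu,\Lambda)^{-1} \in \A(\rho)$. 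The main obstacle is to upgrade the truncated state equation \eqref{eq:SDE:truncated} satisfied under each $P_n$ to the un-truncated equation \eqref{def:SDE} under $P$. The plan is to exploit the uniform $p'$-moment bounds of Lemma \ref{le:moments}: by (A.4), the differences $|b^n - b|$, $|\sigma^n - \sigma|$, and $|\sigma^n_0 - \sigma_0|$ are pointwise dominated by expressions in $|X|$, $(\int|z|^p \mu^x(dz))^{1/p}$, and $|a|$ whose $p$-th moments are uniformly $P_n$-integrable (in fact bounded in $L^{p'/p}$), so the truncation errors vanish uniformly as $n \to \infty$. Invoking the stability result of Kurtz and Protter \cite{kurtzprotter-weakconvergence} (exactly as in the second step of the proof of Lemma \ref{le:limitpresolution}) permits passage to the limit in the drift and stochastic integrals, yielding \eqref{def:SDE} under $P$; uniqueness from Lemma \ref{le:stateestimate} then identifies $P$ with $\RC(Q)$, completing the verification that $P \in \RC\A(\rho)$.
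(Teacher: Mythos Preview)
Your proposal is correct and follows essentially the same route as the paper: Fatou/Portmanteau for the $p'$-moment bound, passage to the limit for independence and the fixed-point identity, and then Lemma~\ref{le:a.s.compatible} combined with the truncation-error estimate and Kurtz--Protter for the state equation. One minor slip: the map $q \mapsto \int_{[0,T]\times A}|a|^{p'}\,q(dt,da)$ is not continuous on $\V$ (since $\V$ carries the $p$-Wasserstein topology and $p'>p$), only lower semicontinuous, but that is all you need and the rest of your argument is unaffected.
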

\begin{proof}
Fatou's lemma and the first bound in \eqref{pf:unbounded3} imply the stated inequality. 
We now check (1), (2) and (3) in Definition \ref{def:mfgsolution}. 
Since $(B,\mu)$, $\xi$ and $W$ are independent under $P_n$, the same is true under the limit $P$, 
which gives (1). We now check (2). 
The strategy is to apply Lemma 
\ref{le:a.s.compatible}. By passage to the limit, it is well checked that 
$(B,W)$ is a Wiener process with respect to the filtration 
$({\mathcal F}_{t}^{\xi,B,W,\mu,\Lambda,X})_{t \in [0,T]}$
under $P$ (which implies in particular that $\rho \in \P^p_{c}[(\Omega_{0} \times {\mathcal P}^p(\X),
\rho) \leadsto \V]$). Moreover, it must also hold $P(X_{0}=\xi)=1$. Therefore, in order to 
prove (2), it sufficient to check (3) and to check that the state equation 
\eqref{def:SDE} is satisfied under $P$.

We first check (3). If $\psi : \C^{m_0} \times \P^p(\X) \rightarrow \R$ and $\phi : \X \rightarrow \R$ are bounded and continuous, we have
\begin{align*}
\E^P\left[\psi(B,\mu)\phi(W,\Lambda,X)\right] &= \lim_{k\rightarrow\infty}\E^{P_{n_k}}\left[\psi(B,\mu)\phi(W,\Lambda,X)\right] \\
	&= \lim_{k\rightarrow\infty}\E^{P_{n_k}}\left[\psi(B,\mu)\int_\X\phi\,d\mu\right] = \E^P\left[\psi(B,\mu)\int_\X\phi\,d\mu\right].
\end{align*}
Thus $\mu = P((W,\Lambda,X) \in \cdot \ | \ \F^{B,\mu}_T)$ a.s., which gives (3) in 
Definition \ref{def:mfgsolution}. Now, to check that the state equation is satisfied, define processes $(Z^q_t)_{t \in [0,T]}$ on $\Omega$ by
\[
Z^q_t := 1 + |X_t|^q + \left(\int_{\R^d}|y|^p\mu^x_t(dy)\right)^{q/p}, \ q > 0.
\]
Using the growth assumptions on $b$ of (A.4), note that $b(t,y,\nu,a) \neq b^n(t,y,\nu,a)$ if and only if 
\begin{align}
n &< |b(t,y,\nu,a)| \le c_1\biggl(1 + |y| + \left(\int_{\R^d}|z|^p\nu(dz)\right)^{1/p} + |a|\biggr), \label{eq:bdiffbn}
\end{align}
so that
\begin{align*}
\E^{P_n}\left|\int_0^tds\int_A\Lambda_s(da)(b^n-b)(s,X_s,\mu^x_s,a)\right| &\le 2c_1\E^{P_n}\int_0^tds\int_A\Lambda_s(da)\left(Z^1_s + |a|\right)1_{\{c_1(Z^1_s + |a|) > n\}}.
\end{align*}
By Lemma \ref{le:moments}, this tends to zero as $n \rightarrow\infty$. Similarly,
 $\sigma(t,y,\nu) \neq \sigma^n(t,y,\nu)$ if and only if 
\begin{align}
n^2 &< |\sigma(t,y,\nu)|^2 \le c_1
\biggl(1 + |y|^{p_\sigma} + \left(\int_{\R^d}|z|^p\nu(dz)\right)^{p_\sigma/p}\biggr),
\label{eq:sigmadiffsigman}
\end{align}
so that the Burkholder-Davis-Gundy inequality yields
\begin{align*}
\E^{P_n}\left|\int_0^t (\sigma^n-\sigma)(s,X_s,\mu^x_s)dW_s\right| &\le 
2 ( c_1 )^{1/2}
\E^{P_n}\biggl[\left(\int_0^t Z_s^{p_\sigma}1_{\{c_1Z_s^{p_\sigma} > n^2\}} ds\right)^{1/2}\biggr].
\end{align*}
This tends to zero as well, as does
$\E^{P_n}
|\int_0^t (\sigma^n_0-\sigma_0)(s,X_s,\mu^x_s)dB_s|$. It follows that 
\begin{align*}
0 = \lim_{n\rightarrow\infty}\E^{P_n}\sup_{0 \le t \le T}&\left|X_t - X_0 - \int_0^tds\int_A\Lambda_s(da)b(s,X_s,\mu^x_s,a) \right. \\
	&\left. - \int_0^t \sigma(s,X_s,\mu^x_s)dW_s - \int_0^t \sigma_0(s,X_s,\mu^x_s)dB_s\right|.
\end{align*}
Finally, combine this with the results of Kurtz and Protter \cite{kurtzprotter-weakconvergence} to conclude that the SDE \eqref{def:SDE} holds under $P$.
\end{proof}

\subsubsection{Optimality} \label{subse:optimality}
It remains to show the limit point $P$ in Lemma \ref{le:moments} is optimal. 
Generally speaking, the argument is as follows.
Fix $P' \in \RC\A(\rho)$, with $\rho := P \circ (\xi,B,W,\mu)^{-1}$. If we can prove that 
there exist $P'_n \in \RC_n\A_n(\rho_n)$ such that $J(P'_n) \rightarrow J(P')$, then, by
optimality of $P_n$ for each $n$, it holds that $J(P_n) \ge J(P'_n)$. Since $J$ is upper semicontinuous by Lemma \ref{le:jcontinuous}, we then get
\begin{align*}
J(P)	&\ge \limsup_{k\rightarrow\infty}J(P_{n_k}) \ge \lim_{k\rightarrow\infty}J(P'_{n_k}) = J(P').
\end{align*}
Since $P'$ was arbitrary, this implies that $P$ is optimal, or $P \in \RC\A^\star(\rho)$, which completes the proof of
Theorem \ref{th:existence}. 

The goal is thus to prove the existence of the sequence $(P'_{n})_{n \geq 1}$. 
For this, we need again to approximate general controls by adapted controls, as in Lemma \ref{le:adapteddensity}. To this end, now that $A$ is non-compact, we generalize the definition of the class $\A_a(\rho)$ in Definition 
\ref{def:adapted}: Let $\A_a(\rho)$ now denote the set of measures of the form
\[
\rho(d\omega,d\nu)\delta_{\phi(\omega,\nu)}(dq) = \rho \circ (\xi,B,W,\mu,\phi(\xi,B,W,\mu))^{-1}
\]
where $\phi : \Omega_0 \times \P^p(\X) \rightarrow \V_m$ is adapted and continuous and $m$ is some positive integer (see \eqref{eq:Anrho} for the definition of $\V_{m}$). In particular, a control $Q \in \A_a(\rho)$ satisfies $Q(\Lambda \in \V_m) = 1$ for some $m$ and renders $(\Lambda_t)_{t \in [0,T]}$ (a.s.-) adapted to 
$(\F^{\xi,B,W,\mu}_t)_{t \in [0,T]}$. Note that when $A$ is compact this definition specializes to the one provided before.
The construction of $(P'_{n})_{n \geq 1}$ then follows from the combination of the two next lemmas:

\begin{lemma} \label{le:limp'1}
For each $P' \in \RC\A(\rho)$ such that $J(P') > -\infty$, there exist $P'_n \in \RC\A_a(\rho)$ such that
$J(P') = \lim_{n\rightarrow\infty}J(P'_n)$. (As usual $\RC\A_a(\rho)$ is the image of $\A_{a}(\rho)$
by $\RC$.)
\end{lemma}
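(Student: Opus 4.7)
The plan is a two-step approximation: first truncate the control to be bounded, then appeal to Lemma \ref{le:adapteddensity} within the bounded (and thus compact) control setting where the continuous-dependence machinery is available.

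I first argue that the hypothesis $J(P')>-\infty$ forces $\E^{Q'}\int_0^T\!\int_A|a|^{p'}\Lambda_t(da)dt<\infty$, where $Q'\in\A(\rho)$ satisfies $P'=\RC(Q')$. Indeed, $\mu^x$ has $p'$-moments under $\rho$ (inherited from the pre-solution $P$ via Lemma \ref{le:moments}), Lemma \ref{le:stateestimate} with $\gamma=p$ yields $\E^{P'}\|X\|_T^p<\infty$, and then the upper bound $f\le c_2(\cdots)-c_3|a|^{p'}$ from (A.5) forces the $p'$-integrability of the control, since otherwise $J(P')=-\infty$.

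For the truncation, fix $a_0\in A_{n_0}$ and for $m\ge n_0$ define the Borel map $\Phi_m:A\to A_m$ by $\Phi_m(a)=a$ if $|a|\le m$ and $\Phi_m(a)=a_0$ otherwise. Set $\Lambda^{(m)}(\omega):=\Lambda(\omega)\circ(\mathrm{id}_{[0,T]}\times\Phi_m)^{-1}\in\V_m$ and $Q'_m:=Q'\circ(\xi,B,W,\mu,\Lambda^{(m)})^{-1}$. Since $\F^{\Lambda^{(m)}}_t\subset\F^{\Lambda}_t$, compatibility is preserved and $Q'_m\in\A_m(\rho)$. By the $p'$-integrability just established, $\Lambda^{(m)}\to\Lambda$ in $L^{p'}$ on $\V$, so $Q'_m\to Q'$ in $\P^p(\Omega_0\times\P^p(\X)\times\V)$; combined with the uniform bound $\E^{Q'_m}\int|a|^{p'}\Lambda_t(da)dt\le\E^{Q'}\int|a|^{p'}\Lambda_t dt+T|a_0|^{p'}$, Lemma \ref{le:rccontinuous} gives $\RC(Q'_m)\to\RC(Q')$ in $\P^p(\Omega)$. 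To upgrade this to $J(\RC(Q'_m))\to J(P')$, I couple the two state processes on the single probability space $(\Omega_0\times\P^p(\X)\times\V,Q')$: let $X$ solve the SDE driven by $\Lambda$ and $X^m$ the SDE driven by $\Lambda^{(m)}$. Standard SDE stability using the Lipschitz bounds in (A.4) and $\Lambda^{(m)}\to\Lambda$ in $L^{p'}$ yields $\E\|X^m-X\|_T^{p'}\to 0$. Continuity of $f,g$ together with the $p$-growth upper bound and the $p'$-growth lower bound, dominated by the uniformly integrable envelope $C(1+\|X\|_T^p+\|X^m\|_T^p+\int\|y\|_T^p\mu^x(dy)+\int|a|^{p'}\Lambda_t(da)dt)$, lets me pass to the limit to get $J(\RC(Q'_m))\to J(P')$.

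For the second step, fix $m$ and apply Lemma \ref{le:adapteddensity} with the effective control space $A_m$ (compact): this produces $Q'_{m,k}\in\A_a(\rho)$ with $Q'_{m,k}(\Lambda\in\V_m)=1$ and $Q'_{m,k}\to Q'_m$ in $\P^p$ as $k\to\infty$. Uniform boundedness by $m$ makes all moments uniformly bounded, so Lemma \ref{le:rccontinuous} gives $\RC(Q'_{m,k})\to\RC(Q'_m)$, and Lemma \ref{le:jcontinuous} (in the compact case, where $J$ is genuinely continuous) yields $J(\RC(Q'_{m,k}))\to J(\RC(Q'_m))$. A diagonal extraction $k=k_m$ with $|J(\RC(Q'_{m,k_m}))-J(\RC(Q'_m))|<1/m$ delivers the required sequence $P'_m:=\RC(Q'_{m,k_m})\in\RC\A_a(\rho)$ with $J(P'_m)\to J(P')$.

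The main obstacle is the convergence $J(\RC(Q'_m))\to J(P')$ in the truncation step, since $J$ is only upper semicontinuous under Assumption \ref{assumption:A} (the $-c_3|a|^{p'}$ penalty destroys lower semicontinuity in the weak topology). The key device is the pathwise coupling on a single probability space together with $L^{p'}$-stability of the controlled SDE, which circumvents the lack of continuity of $J$ by turning the matter into an application of dominated convergence.
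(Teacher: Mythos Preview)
Your proof is correct and follows the same two-step architecture as the paper: truncate the control to lie in a compact $A_m$, then invoke Lemma~\ref{le:adapteddensity} (with compact control space) together with the continuity of $J\circ\RC$ on $\A_m(\rho)$. The only notable difference is in the truncation step: the paper uses a projection $\iota_n$ satisfying $|\iota_n(a)|\le|a|$, which immediately gives the pointwise domination $\int|a|^{p'}\Lambda^n_t(da)\le\int|a|^{p'}\Lambda_t(da)$ and hence uniform integrability of the rewards $\Gamma$, whereas you argue via an explicit $L^{p'}$-stability estimate for the coupled state processes $X^m\to X$ and dominated convergence. Your route is slightly more hands-on but has the virtue of making transparent exactly why $J(\RC(Q'_m))\to J(P')$ despite $J$ being only upper semicontinuous in the $\P^p$-topology; the paper's one-line appeal to ``continuity of $\Gamma$'' implicitly relies on the same coupling (or on the fact that $\Lambda^n\to\Lambda$ in the stronger $p'$-sense), so your treatment is arguably more careful here.
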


\begin{lemma} \label{le:limp'2}
For each $P' \in \RC\A_a(\rho)$, there exist $P'_n \in \RC_n\A_n(\rho_n)$ such that
$J(P') = \lim_{n\rightarrow\infty}J(P'_n)$.
(See \eqref{eq:Anrho} for the definition of $\A_{n}(\rho_{n})$.)
\end{lemma}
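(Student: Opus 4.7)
The plan is to lift $P'$ to the truncated problems by composing the adapted selector $\phi$ defining $P'$ with the sequence $\rho_n$. Since $P' \in \RC\A_a(\rho)$, I can write $P' = \RC(Q')$ with
\[
Q' = \rho \circ (\xi, B, W, \mu, \phi(\xi, B, W, \mu))^{-1}
\]
for some continuous $(\F^{\xi,B,W,\mu}_t)_{t \in [0,T]}$-adapted function $\phi : \Omega_0 \times \P^p(\X) \to \V_m$ taking values in some fixed $\V_m$. For $n \ge m$, I set
\[
Q'_n := \rho_n \circ (\xi, B, W, \mu, \phi(\xi, B, W, \mu))^{-1}, \qquad P'_n := \RC_n(Q'_n).
\]
Since $\phi$ is adapted, $\Lambda$ is a measurable function of $(\xi, B, W, \mu)$ under $Q'_n$, so the compatibility condition (2) of Subsection \ref{subse:setup} is trivial; and because $\V_m \subset \V_n$, the control $\Lambda$ is $\V_n$-valued, yielding $Q'_n \in \A_n(\rho_n)$ and therefore $P'_n \in \RC_n\A_n(\rho_n)$.

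The second step is to show $P'_n \to P'$ in $\P^p(\Omega)$. Since $\phi$ is continuous and takes values in the compact set $\V_m$, the $\P^p$-convergence $\rho_n \to \rho$ gives $\P^p$-convergence $Q'_n \to Q'$ via the continuous mapping theorem combined with the uniform boundedness of the $\V$-component. Lemma \ref{le:stateestimate} applied with the truncated coefficients $(b^n, \sigma^n, \sigma^n_0)$, whose Lipschitz and growth constants are independent of $n$, then produces a uniform $p'$-moment bound on $\|X\|_T$ under $P'_n$ (the bound on $\int \|z\|_T^p \mu^x(dz)$ is controlled through $\rho_n \to \rho$ in $\P^p$, and $|a|$ is bounded since $\Lambda$ is $\V_m$-valued). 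Aldous' criterion (as in Proposition \ref{pr:itocompact}) then yields relative compactness of $(P'_n)_n$ in $\P^p(\Omega)$. For any subsequential limit $\tilde P$, the $(\xi, B, W, \mu, \Lambda)$-marginal of $\tilde P$ equals $Q'$, and I would adapt the argument of Lemma \ref{le:limitpoint}, using the growth estimates \eqref{eq:bdiffbn}--\eqref{eq:sigmadiffsigman} together with the uniform $p'$-moment control to show the truncation errors $b^n - b$, $\sigma^n - \sigma$, $\sigma_0^n - \sigma_0$ vanish in $L^1$, followed by Kurtz--Protter, to conclude that $\tilde P$ satisfies the untruncated SDE \eqref{def:SDE} driven by $Q'$. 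Strong uniqueness then forces $\tilde P = \RC(Q') = P'$, and the full sequence converges.

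For $J(P'_n) \to J(P')$, I would exploit the boundedness of $\Lambda$: under every $P'_n$ and $P'$, $\Lambda$ is supported on $\V_m$, so (A.5) bounds the integrand $|\Gamma(\mu^x, \Lambda, X)|$ by $C(m)\bigl(1 + \|X\|_T^p + \int \|z\|_T^p\mu^x(dz)\bigr)$. The uniform $p'$-moment bounds obtained above, combined with $p' > p$, give uniform integrability, and joint continuity of $f$ and $g$ together with $P'_n \to P'$ in $\P^p(\Omega)$ yield the limit, exactly as in the continuity half of Lemma \ref{le:jcontinuous} (via Lemma \ref{le:componentwise}) applied to this effectively bounded-control setting.

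The main technical obstacle will be the passage to the weak limit in the state SDE when both the coefficients $(b^n, \sigma^n, \sigma^n_0)$ and the driving input $Q'_n$ vary with $n$. The crucial ingredient is that (A.4) provides $n$-independent constants, so that Lemma \ref{le:stateestimate} yields $p'$-moment bounds on $P'_n \circ X^{-1}$ that are uniform in $n$; this uniform integrability is precisely what controls the truncation error (mirroring the computation in Lemma \ref{le:limitpoint}) and what allows Kurtz--Protter to identify the limiting SDE.
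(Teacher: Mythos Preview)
Your proof is correct and follows essentially the same route as the paper's: push $\rho_n$ forward through $(\xi,B,W,\mu,\phi)$ to define $Q'_n$, set $P'_n = \RC_n(Q'_n) \in \RC_n\A_n(\rho_n)$ for $n \ge m$, establish $P'_n \to P'$ in $\P^p(\Omega)$ via relative compactness plus identification of the limit through Kurtz--Protter, and conclude $J(P'_n)\to J(P')$ by continuity of $J$ in the effectively compact-control regime. Your treatment of the limit identification is in fact more explicit than the paper's (which refers somewhat tersely to Lemma~\ref{le:rccontinuous}, a result that does not itself address the truncation $b^n\to b$, $\sigma^n\to\sigma$, $\sigma_0^n\to\sigma_0$); the only imprecision is that applying Lemma~\ref{le:stateestimate} with $\gamma=p'$ requires a uniform bound on $\E^{P'_n}\!\int\|z\|_T^{p'}\mu^x(dz)$ rather than the $p$-moment you cite, but this bound is exactly~\eqref{pf:unbounded3} since $P'_n\circ\mu^{-1}=\rho_n\circ\mu^{-1}=P_n\circ\mu^{-1}$.
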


\subsubsection{Proof of Lemma \ref{le:limp'1}}
\textit{First step.}
First, assume $P' \in \RC\A_m(\rho)$ for some fixed $m$, so trivially $J(P') > -\infty$. That is, $P'(\Lambda \in \V_m) = 1$. Write $P' = \RC(Q')$, where $Q' \in \A_m(\rho)$. By Lemma \ref{le:adapteddensity}, $\A_a(\rho)$ is dense in $\A_m(\rho)$, and there exist $Q'_n \in \RC\A_a(\rho)$ such that $Q'_n \rightarrow Q'$ in $\P^p(\Omega_0 \times \P^p(\X) \times \V)$. Since $A_m$ is compact, $J \circ \RC$ is continuous on $\A_m(\rho)$ by Lemma \ref{le:jcontinuous}, and
$J(P') = \lim_{n\rightarrow\infty}J(\RC(Q'_n))$. 

\textit{Second step.}
Now assume $P' \in \RC\A(\rho)$ satisfies $J(P') > -\infty$. By the first step, it suffices to show that there exist $P'_n \in \RC\A_n(\rho)$ such that 
$J(P') = \lim_{n\rightarrow\infty}J(P'_n)$,
since we just showed that each $P'_n$ may be approximated by elements of $\RC\A_a(\rho)$. 

First, the upper bounds of $f$ and $g$ imply
\begin{align*}
-\infty < J(P') &\le c_2(T+1)\left(1 + \E^{P'}\|X\|_T^p + \E^{P'} \int_{\C^d} 
\|z\|^p_T\mu(dz) \right) - c_3\E^{P'}\int_0^Tdt \int_{A} |a|^{p'} \Lambda_t(da).
\end{align*}
Since $P' \in \RC\A(\rho)$, it must hold
$\E^{P'}\int_{\C^d} 
\|x\|^p_T\mu(dx) < \infty$ and $\E^{P'}\|X\|_T^p  < \infty$, which implies
\begin{align}
\E^{P'}\int_0^Tdt \int_{A} |a|^{p'} \Lambda_{t}(da) < \infty. \label{pf:lambdaintegrable2}
\end{align}
Let $\iota_n : A \rightarrow A$ denote any measurable function satisfying $\iota_n(A) \subset A_n$ and $\iota_n(a) = a$ for all $a \in A_n$, so that $\iota_n$ converges pointwise to the identity. Let $\Lambda^n$ denote the image under $\Lambda$ of the map $(t,a) \mapsto (t,\iota_n(a))$, so that $P'(\Lambda^n \in \V_n) = 1$. Let $Q'_n := P' \circ (\xi,B,W,\mu,\Lambda^n)^{-1}$, which is in $\RC\A_n(\rho)$. Since $\Lambda^n \rightarrow \Lambda$ $P'$-a.s., it follows that $Q'_n \rightarrow Q'$ in $\P^p(\Omega_0 \times \P^p(\X) \times \V)$, where $Q'$ satisfies $P'=\RC(Q')$. By continuity of $\RC$ (see Lemma \ref{le:rccontinuous}, which applies thanks to \eqref{pf:unbounded3}), $\RC(Q'_n)\rightarrow\RC(Q') = P'$ in $\P^p(\Omega)$.
Now, since $\vert \iota_{n}(a) \vert \leq \vert a \vert$, we have
\begin{equation*}
\int_{0}^T  \int_{A} \vert a \vert^{p'} \Lambda^n_{t}(da) dt
\leq \int_{0}^T  \int_{A} \vert a \vert^{p'} \Lambda_{t}(da) dt,
\end{equation*}
which implies that the sequence 
\begin{equation*}
\left( \int_{0}^T  \int_{A} \vert a \vert^{p'} \Lambda^n_{t}(da) dt
\right)_{n = 1}^\infty
\end{equation*}
is uniformly $P'$-integrable. By Lemma \ref{le:stateestimate}, we then have
\[
\sup_n\E^{\RC(Q'_n)}\left[\|X\|_T^{p'} + \int_{\C^d}\|z\|_T^{p'}\mu(dz)\right] < \infty.
\]
The growth assumptions of $f$ and $g$ imply that the rewards are uniformly integrable in the sense that
\[
\lim_{r\rightarrow\infty}\sup_n\E^{\RC(Q'_n)}\left[\Gamma(\mu,\Lambda,X)1_{\{|\Gamma(\mu,\Lambda,X)| > r\}}\right] = 0.
\]
Finally, from the continuity of $\Gamma$ we conclude that $J(\RC(Q'_n)) \rightarrow J(P')$.

\subsubsection{Proof of Lemma \ref{le:limp'2}}
Find $Q' \in \A_a(\rho)$ such that $P' = \RC(Q')$. There exist $m$ and an adapted function $\phi : \Omega_0 \times \P^p(\X) \rightarrow \V_m$ such that $\phi(\omega,\cdot)$ is continuous for each $\omega \in \Omega_0$ and
\[
Q' := \rho \circ \left(\xi,B,W,\mu,\phi(\xi,B,W,\mu)\right)^{-1}.
\]
Recalling the definition of $\rho_n$ from just before \eqref{def:nfixedpoint}, define 
\[
Q'_n := \rho_n \circ \left(\xi,B,W,\mu,\phi(\xi,B,W,\mu)\right)^{-1}.
\]
Note that $Q'_n(\Lambda \in \V_m) = 1$. Hence $Q'_n \in \A_n(\rho_n)$ for $n \ge m$. It follows from boundedness and continuity of $\phi$ in $\mu$ (and Lemma \ref{le:componentwise} to handle the fact that 
$\phi$ may not be continuous in $(\xi,B,W)$) that $Q'_n \rightarrow Q'$ 
The proof will be complete if we can show 
\begin{align}
\RC_n(Q'_n) \rightarrow P', \text{ in } \P^p(\Omega). \label{pf:limp'2-1}
\end{align}
Indeed, since $A_m$ is compact, we use the continuity of $J$ (see Lemma \ref{le:jcontinuous}) to complete the proof. We prove \eqref{pf:limp'2-1} with exactly the same argument as in Lemma \ref{le:rccontinuous}: Since $\RC_n(Q_n) \circ (\xi,B,W,\mu,\Lambda)^{-1} = Q_n$ are relatively compact in $\P^p(\Omega_0 \times \P^p(\X) \times \V)$, Aldous' criterion (see Proposition \ref{pr:itocompact} for details) implies that $\RC_n(Q'_n) \circ X^{-1}$ are relatively compact in $\P^p(\C^d)$. Thus $\RC_n(Q'_n)$ are relatively compact in $\P^p(\Omega)$. Conclude exactly as in the proof of Lemma \ref{le:rccontinuous} that any limit point must equal $P'$.

\section{Strict and strong controls} \label{se:strictstrongcontrols}
This section addresses the question of the existence of strict and strong controls. 
Recall that $Q \in \A(\rho)$ (resp. $P \in \RC\A(\rho)$) is a strict control if $Q(\Lambda \in \V_a) = 1$ (resp. $P(\Lambda \in \V_a) = 1$), where
\begin{align}
\V_a := \left\{q \in \V : q_t = \delta_{\alpha(t)} \text{ for some } \alpha \in L^p([0,T];A) \right\}. \label{def:va}
\end{align}
 Recall also that $Q$ is a strong control 
if  there exists an $A$-valued process $(\alpha_t)_{t \in [0,T]}$, progressively-measurable with respect to the $P$-completion of 
 $(\F^{\xi,B,W,\mu}_t)_{t \in [0,T]}$, such that $Q(\Lambda = dt\delta_{\alpha_t}(da))=1$. The first Subsection \ref{se:strictcontrols} addresses this point under a quite standard condition in control theory. The second Subsection \ref{se:strongcontrols} identifies more specialized assumptions which allow us to find a weak MFG solution with strong control. The idea in each case is the same as in references on relaxed controls: given any weak (relaxed) control, under suitable convexity assumptions, the optional projection of the control onto a suitable sub-filtration will yield an admissible control with a greater value than the original control, without disturbing the joint laws of the other processes.

\subsection{Strict controls} \label{se:strictcontrols}

The following assumption is well-known in control theory (dating to Filippov \cite{filippov-convexity}) and permits the construction of a weak MFG solution with weak \emph{strict} control.

\begin{assumption}{\textbf{C}} \label{assumption:C}
For each $(t,x,\mu) \in [0,T] \times \R^d \times \P^p(\R^d)$, the following set is convex:
\[
K(t,x,\mu) := \left\{\left(b(t,x,\mu,a),z\right) : a \in A, \ z \le f(t,x,\mu,a)\right\} \subset \R^d \times \R.
\]
\end{assumption}

The most obvious examples of assumption \ref{assumption:C} are the affine drifts $b$, i.e.
$b(t,x,\mu,a) = b^1(t,x,\mu)a + b^2(t,x,\mu)$, and objectives $f(t,x,\mu,a)$ which are concave in $a$. Here
is the main result of this subsection:

\begin{theorem} \label{th:strictexistence}
In addition to assumption \ref{assumption:A}, 
suppose also that assumption \ref{assumption:C} holds. Then there exists a weak MFG solution with weak strict control that satisfies $E \int_{0}^T \vert \alpha_{t} \vert^{p'} dt < \infty$. 
\end{theorem}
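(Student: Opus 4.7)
The plan is to start from a weak MFG solution with weak relaxed control—whose existence is guaranteed by Theorem \ref{th:existence}—and then, using Assumption \ref{assumption:C}, collapse the relaxed control onto a strict control $\delta_\alpha$ without destroying either the state equation or the optimality. This is the classical Filippov idea, and the paper already points to \cite{haussmannlepeltier-existence} as a reference; I would follow the same template, adapted to the MFG setting.

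First I would apply Theorem \ref{th:existence} to obtain a weak MFG solution $(\Omega,(\F_t)_{t\in [0,T]},P,B,W,\mu,\Lambda,X)$ with $\E\int_0^T\int_A|a|^{p'}\Lambda_t(da)\,dt<\infty$, and then run a measurable selection. For $P$-a.e.\ $\omega$ and a.e.\ $t$, the point
\[
\Bigl(\int_A b(t,X_t,\mu^x_t,a)\Lambda_t(da),\ \int_A f(t,X_t,\mu^x_t,a)\Lambda_t(da)\Bigr)
\]
lies in $K(t,X_t(\omega),\mu^x_t(\omega))$ by Assumption \ref{assumption:C} and convexity under $\Lambda_t(da)$, so a standard progressive selection theorem (as in \cite{haussmannlepeltier-existence}) yields an $(\F_t)$-progressive $A$-valued process $(\alpha_t)_{t\in[0,T]}$ with
\[
b(t,X_t,\mu^x_t,\alpha_t)=\int_A b(t,X_t,\mu^x_t,a)\Lambda_t(da),\qquad f(t,X_t,\mu^x_t,\alpha_t)\geq \int_A f(t,X_t,\mu^x_t,a)\Lambda_t(da).
\]
Combining the coercivity of $f$ in $|a|^{p'}$ from (A.5) with its lower bound gives $|\alpha_t|^{p'}\le C(1+|X_t|^p+\int|z|^p\mu^x_t(dz)+\int_A|a|^{p'}\Lambda_t(da))$, which upon integration yields $\E\int_0^T|\alpha_t|^{p'}dt<\infty$ thanks to Theorem \ref{th:existence} and Lemma \ref{le:stateestimate}.

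Next I would set $\Lambda'_t:=\delta_{\alpha_t}$ and $\mu':=P((W,\Lambda',X)\in\cdot\,|\,B,\mu)$, viewed as a $\P^p(\X)$-valued random variable. Because $\mu'^x=\mu^x$ and the drift in \eqref{def:relaxedSDE-weak} is preserved by the selection, the state equation continues to hold for $(\mu',\Lambda',X)$. The fixed point condition $\mu'=P((W,\Lambda',X)\in\cdot\,|\,B,\mu')$ is then automatic by the tower property, since $\mu'$ is $\sigma(B,\mu')$-measurable and $\sigma(B,\mu')\subset\sigma(B,\mu)$. The remaining requirements (1)--(3) of Definition \ref{def:mfg:weak:weak} for the new tuple are inherited from the corresponding properties of the original tuple, using that $\alpha$ is $(\F_t)$-progressive and that $\mu'$ is a time-consistent function of $(B,\mu)$.

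The hard step will be the optimality condition (5). I would exploit the fact that the reward $\Gamma$ depends on $\mu$ only through $\mu^x=\mu'^x$, together with the optimality of the original tuple, via a lifting argument. Given any competitor $(\Omega'',P'',B'',W'',\nu'',\Lambda'',X'')$ for the new problem, with $(X_0'',B'',W'',\nu'')\sim(X_0,B,W,\mu')$, I would form a regular conditional kernel $\kappa(x_0,b,w,m';d\nu)$ for $\mu$ given $(X_0,B,W,\mu')$ under $P$, enlarge the competitor space by sampling $\tilde\nu\sim\kappa(X_0'',B'',W'',\nu'';\cdot)$, and thereby realize $(X_0'',B'',W'',\tilde\nu)\sim(X_0,B,W,\mu)$ with $\tilde\nu^x=\nu''^x$ (because $\mu^x=\mu'^x$). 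The original optimality would then give
\[
\E^P[\Gamma(\mu^x,\Lambda,X)]\ \geq\ \E^{P''}[\Gamma(\tilde\nu^x,\Lambda'',X'')]\ =\ \E^{P''}[\Gamma(\nu''^x,\Lambda'',X'')],
\]
and combined with the pointwise inequality $\Gamma(\mu'^x,\Lambda',X)\geq\Gamma(\mu^x,\Lambda,X)$ from Step 1 this closes the argument. The main delicate point is verifying that the lifted tuple $(\tilde\nu,\Lambda'',X'')$ is genuinely admissible for the original problem—in particular that $\Lambda''$ remains compatible with the enlarged filtration carrying $\tilde\nu$. This should follow from standard facts about natural (H-hypothesis) extensions à la \cite{jacodmemin-weaksolution,kurtz-yw2013}, since the kernel $\kappa$ is sampled as an external randomization on top of $\nu''$ and contributes no new information before time $T$ beyond what $(B'',\nu'')$ already provides.
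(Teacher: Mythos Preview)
Your first four steps coincide with the paper's proof: you invoke Theorem \ref{th:existence}, run the Filippov selection exactly as in Proposition \ref{pr:strictexistence}, and redefine the environment as $\mu' = P((W,\Lambda',X)\in\cdot\,|\,B,\mu)$, which is the paper's $\bar\mu$. Your $p'$-integrability bound on $\alpha$ via the upper and lower bounds of (A.5) is correct, and in fact makes explicit a point the paper's proof leaves implicit.

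The gap is in your treatment of optimality. Your claim that the kernel $\kappa$ ``contributes no new information before time $T$ beyond what $(B'',\nu'')$ already provides'' is not right: the whole point of sampling $\tilde\nu$ is that $\mu$ carries strictly more information than $\mu'$, and that extra information is revealed progressively through the filtration $(\F^{\tilde\nu}_t)_{t\in[0,T]}$ required by condition (1) of Definition \ref{def:mfg:weak:weak}. Once $\tilde\nu$ enters the filtration in this adapted way, you must check that $\F^{\Lambda''}_t$ remains conditionally independent of $\F^{X''_0,B'',W'',\tilde\nu}_T$ given $\F^{X''_0,B'',W'',\tilde\nu}_t$. This would require, at minimum, an H-hypothesis statement of the form ``$\F^{B,\mu}_t$ and $\F^{B,\mu'}_T$ are conditionally independent given $\F^{B,\mu'}_t$ under $P$,'' and there is no reason this should hold for an arbitrary weak MFG solution---the inclusion $\F^{\mu'}_t\subset\F^{B,\mu}_t$ established in the paper goes in the wrong direction for this purpose. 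Standard results on natural extensions do not supply this, because the extension here is not by an independent randomization of the \emph{environment}: $\tilde\nu$ is genuinely correlated with the future of $(B'',\nu'')$.

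The paper sidesteps this entirely. Rather than lifting an arbitrary weak competitor, it lifts only competitors in the dense class $\A_a(\overline\rho)$ of adapted continuous controls $\phi(\xi,B,W,\bar\mu)$. Since $\bar\mu$ is $\sigma(B,\mu)$-measurable and $\F^{\bar\mu}_t\subset\F^{B,\mu}_t$ up to null sets, such a $\phi$ is automatically $(\F^{\xi,B,W,\mu}_t)$-adapted and hence trivially compatible in $\A(\rho)$; no probability-space extension is needed at all. Optimality against general competitors then follows from the density Lemmas \ref{le:limp'1} and \ref{le:limp'2}. This is both simpler and robust: it converts the hard compatibility question into a soft density statement that was already proved for the existence theory.
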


The proof of Theorem \ref{th:strictexistence} relies on 

\begin{proposition} \label{pr:strictexistence}
Assume \ref{assumption:A} and \ref{assumption:C} hold. Let $\rho \in 
\P^p_{c}[(\Omega_{0},\W_{\lambda})\leadsto 
{\mathcal P}^p(\X)]$ and $P \in \RC\A(\rho)$. Then there exists a strict control $P' \in \RC\A(\rho)$ such that 
\[
P' \circ (\xi,B,W,\mu,X)^{-1} := P \circ (\xi,B,W,\mu,X)^{-1}
\]
and $J(P') \ge J(P)$.
\end{proposition}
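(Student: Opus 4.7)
The plan is a classical Filippov-style ``strictification'' argument: use Assumption \ref{assumption:C} and a measurable selection theorem to replace $\Lambda$ by a Dirac mass $\delta_{\alpha_\cdot}$ that reproduces the averaged drift \emph{exactly} and dominates the averaged running reward \emph{pointwise}. The crucial structural point is that the diffusion coefficients of \eqref{def:SDE} do not depend on $a$; matching only the drift is therefore enough to leave the state process $X$ untouched, so that the $(\xi,B,W,\mu,X)$-marginal is automatically preserved.

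Define under $P$ the averaged coefficients
\begin{align*}
\bar b_t := \int_A b(t,X_t,\mu^x_t,a)\,\Lambda_t(da), \qquad \bar f_t := \int_A f(t,X_t,\mu^x_t,a)\,\Lambda_t(da),
\end{align*}
well defined a.e.\ by (A.4), (A.5), and the $p$-integrability of $\Lambda$ under $P$; one may reduce to the case $J(P) > -\infty$, so that $\bar f_t$ is a.e.\ finite. Since $\Lambda_t$ is a probability measure on $A$, the pair $(\bar b_t,\bar f_t)$ is a $\Lambda_t$-barycenter of points of $K(t,X_t,\mu^x_t)$, hence lies in $K(t,X_t,\mu^x_t)$ by \ref{assumption:C}. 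By the very definition of $K$, this says that
\begin{align*}
\Phi_t(\omega) := \bigl\{a\in A:\ b(t,X_t,\mu^x_t,a)=\bar b_t,\ f(t,X_t,\mu^x_t,a)\ge \bar f_t\bigr\}
\end{align*}
is nonempty $dt\otimes dP$-a.e. Continuity of $b$ and $f$ in $a$ combined with the coercive upper bound $f\le c_2(1+|x|^p+\int|z|^p\mu(dz))-c_3|a|^{p'}$ of (A.5) make the level sets $\{a:f(t,x,\mu,a)\ge z\}$ compact, so $\Phi$ is closed-valued; joint measurability of $b$ and $f$ makes its graph measurable. Applying Kuratowski--Ryll-Nardzewski then yields a progressively measurable $A$-valued selection $(\alpha_t)_{t\in[0,T]}$ adapted to the $P$-completion of $(\F_t^{\xi,B,W,\mu,\Lambda,X})_{t\in[0,T]}$.

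Set $P' := P\circ (\xi,B,W,\mu,dt\,\delta_{\alpha_t}(da),X)^{-1}$. By construction $P' \circ (\xi,B,W,\mu,X)^{-1} = P\circ (\xi,B,W,\mu,X)^{-1}$, and the state equation \eqref{def:SDE} is preserved under $P'$ since the new drift $b(t,X_t,\mu^x_t,\alpha_t)$ equals $\bar b_t$. Admissibility $P'\in \RC\A(\rho)$ reduces to the conditional independence required in the definition of $\A(\rho)$: the filtration generated by $\delta_{\alpha_\cdot}$ sits inside $\F_t^{\xi,B,W,\mu,\Lambda,X} \subseteq \F_t^{\xi,B,W,\mu,\Lambda}$ (since $X$ is strongly adapted to $\F_t^{\xi,B,W,\mu,\Lambda}$ via \eqref{def:SDE}), so the conditional independence of $\F_t^\Lambda$ from $\F_T^{\xi,B,W,\mu}$ given $\F_t^{\xi,B,W,\mu}$ under $P$ propagates to the new $\Lambda$-filtration. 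The $p$-integrability of $\alpha$ follows from applying the upper bound on $f$ in (A.5) to $f(t,X_t,\mu^x_t,\alpha_t)\ge \bar f_t$ together with Lemma \ref{le:stateestimate}. Finally, $J(P')\ge J(P)$ is immediate: $g(X_T,\mu^x_T)$ is unchanged, while $f(t,X_t,\mu^x_t,\alpha_t)\ge \bar f_t$ pointwise in $t$.

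The main obstacle is the measurable-selection step, and specifically justifying that the barycenter $(\bar b_t,\bar f_t)$ lands in $K(t,X_t,\mu^x_t)$ when $\Lambda_t$ need not be compactly supported in $A$. The coercive $-c_3|a|^{p'}$ term in (A.5) is essential here: it controls the tails of the graph $a\mapsto (b,f)(t,x,\mu,a)$ and allows one to approximate $\Lambda_t$-averages from within $K$ by convex combinations of finitely many points without escape to infinity. Once this is in place, the verification of measurability of $\Phi$ and progressive measurability of the selection is routine.
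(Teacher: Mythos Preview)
Your proof is correct and follows essentially the same Filippov-type strictification argument as the paper: both exploit convexity of $K(t,x,\mu)$ to find an $A$-valued progressive selection $\hat\alpha$ matching the averaged drift exactly and dominating the averaged running reward, then define $P'$ by pushing $\Lambda$ forward to $dt\,\delta_{\hat\alpha_t}$ and verify admissibility via the strong-solution inclusion $\F_t^{\xi,B,W,\mu,\Lambda,X}\subset \F_t^{\xi,B,W,\mu,\Lambda}$ (up to null sets). The only difference is presentational: the paper black-boxes the measurable selection step into \cite[Theorem~A.9]{haussmannlepeltier-existence} and \cite[Lemma~3.1]{dufourstockbridge-existence}, whereas you sketch it directly via Kuratowski--Ryll-Nardzewski and make explicit the role of the $-c_3|a|^{p'}$ coercivity in ensuring that $K$ is closed and the barycenter argument goes through for noncompactly supported $\Lambda_t$.
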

\begin{proof}[Proof of Proposition \ref{pr:strictexistence}.]
Note that
$\int_A\Lambda_t(da)(b,f)(t,X_t,\mu^x_t,a) \in K(t,X_t,\mu^x_t)$.
By \cite[Theorem A.9]{haussmannlepeltier-existence}, or rather a slight extension thereof in \cite[Lemma 3.1]{dufourstockbridge-existence}, there exist $(\F^{\xi,B,W,\mu,\Lambda,X}_t)_{t \in [0,T]}$-progressive processes $\hat{\alpha}$ and $\hat{z}$, taking values in $A$ and $[0,\infty)$, respectively, such that
\begin{align}
\int_A\Lambda_t(da)(b,f)(t,X_t,\mu^x_t,a) = &(b,f)(t,X_t,\mu^x_t,\hat{\alpha}_t) - (0,\hat{z}_t). \label{pf:strictexistence1}
\end{align}
Define $P' \in \P(\Omega)$ by
$P' = P \circ (\xi,B,W,\mu,dt\delta_{\hat{\alpha}_t}(da),X)^{-1}$.
Clearly, $(B,W)$ is a Wiener process with respect to $({\mathcal F}_{t}^{\xi,B,W,\mu,\Lambda,X})_{t \in [0,T]}$ under $P'$. 
Since the state equation \eqref{def:SDE} holds under $P$, it follows from \eqref{pf:strictexistence1} that the state equation holds under $P'$ as well, since the first coordinate of the remainder $(0,\hat{z}_t)$ in \eqref{pf:strictexistence1} is zero. Moreover,
\begin{align*}
J(P') &= \E^{P}\biggl[\int_0^Tdtf(t,X_t,\mu^x_t,\hat{\alpha}_t) + g(X_T,\mu^x_T)\biggr] \\
	&\ge \E^{P}\biggl[\int_0^Tdt\int_A\Lambda_t(da)f(t,X_t,\mu^x_t,a)dt + g(X_T,\mu^x_T)\biggr] = J(P).
\end{align*}
Letting
\[
Q' := P' \circ (\xi,B,W,\mu,\Lambda)^{-1} = P \circ (\xi,B,W,\mu,dt\delta_{\hat{\alpha}_t}(da))^{-1},
\]
$Q'$ is
is in  $\A(\rho)$. 
The reason is that $(\hat{\alpha}_{t})_{t \in [0,T]}$
is $(\F^{\xi,B,W,\mu,\Lambda,X}_{t})_{t \in [0,T]}$-progressive, 
so that, for each $t \in [0,T]$ and $C \in {\mathcal B}(A)$, 
$\int_{0}^t 1_{C}(\hat{\alpha}_{s}) ds$ is $\F^{\xi,B,W,\mu,\Lambda,X}_{t}$-measurable.  
Since the solution of the state equation 
\eqref{def:relaxedSDE-weak} is strong, $\int_{0}^t 1_{C}(\hat{\alpha}_{s}) ds$ coincides
$P$ a.s. with a $\F^{\xi,B,W,\mu,\Lambda}_{t}$-measurable random variable. 
By assumption, $\F^{\Lambda}_{t}$ and $\F^{\xi,B,W,\mu}_{T}$ are conditionally independent
under $P$  
given  $\F^{\xi,B,W,\mu}_{t}$. We deduce that 
$\sigma(\int_{0}^r 1_{C}(\hat{\alpha}_{s}) ds : r \leq t, C \in {\mathcal B}(A))$
and $\F^{\xi,B,W,\mu}_{T}$ are also conditionally independent 
under $P$ given  $\F^{\xi,B,W,\mu}_{t}$, which is enough to prove that $Q' \in \A(\rho)$. 
\end{proof}

\begin{proof}[Proof of Theorem \ref{th:strictexistence}]
Let $P \in \P(\Omega)$ be a MFG solution, whose existence is guaranteed by Theorem 
\ref{th:existence}, and set $\rho = P \circ (\xi,B,W,\mu)^{-1}$. By Proposition \ref{pr:strictexistence}, 
there exists $P' \in \RC\A(\rho)$ such that $P \circ (\xi,B,W,\mu,X)^{-1} = P' \circ (\xi,B,W,\mu,X)^{-1}$, $J(P) \le J(P')$, and $P'(\Lambda_t = \delta_{\alpha_t} \ a.e. \ t) = 1$ for some 
$(\F^{B,W,\mu,\Lambda,X}_t)_{t \in [0,T]}$-progressive process $(\alpha_t)_{t \in [0,T]}$. But since $P \in \RC\A^\star(\rho)$ (i.e. $P$ is optimal for the control problem corresponding to $\rho$), it follows that $P' \in \RC\A^\star(\rho)$. It remains to deal with the fixed point condition. Define
\[
\bar{\mu} := P'\left((W,\Lambda,X) \in \cdot \ | \ B,\mu\right).
\]
Conditioning on $(B,\bar{\mu})$ yields
$\bar{\mu} := P'((W,\Lambda,X) \in \cdot \ | \ B,\bar{\mu})$.
Now if $\phi : \X \rightarrow \R$ is $\F^{W,\Lambda,X}_t$-measurable then
\begin{align*}
\int_\X\phi\,d\bar{\mu} = \E^{P'}\bigl[ \left.\phi(W,\Lambda,X) \right| B,\mu \bigr] &= \E^{P'}\left[ \left.\E^{P'}\left[ \left.\phi(W,\Lambda,X)\right| \F^{\xi,B,W,\mu}_T\right] \right| \F^{B,\mu}_T \right] \\
	&= \E^{P'}\left[ \left.\E^{P'}\left[ \left.\phi(W,\Lambda,X)\right| \F^{\xi,B,W,\mu}_t\right] \right| \F^{B,\mu}_T \right] \\
	&= \E^{P'}\left[ \left.\phi(W,\Lambda,X) \right| \F^{B,\mu}_t \right],
\end{align*}
The second equality follows from the conditional independence of $\F^{\xi,B,W,\mu,\Lambda,X}_t$ and  $\F^{\xi,B,W,\mu}_T$ given $\F^{\xi,B,W,\mu}_t$ under $P'$, which holds because $P' \in \RC\A(\rho)$, and the last equality follows easily from the independence of $(\xi,W)$ and $(B,\mu)$. This holds for each $\phi$, and thus $\F^{\bar{\mu}}_t \subset \F^{B,\mu}_t$ for all $t$, up to $\rho \circ (B,\mu)^{-1}$-null sets. It follows that $(B,W)$ is a Wiener process under $P'$ with respect to the filtration generated by $(\xi,B,W,\bar{\mu},\Lambda,X)$, which is smaller than $(\xi,B,W,\mu,\Lambda,X)$. Moreover, 
by definition, $\bar{\mu}^x = P'(X \in \cdot \ | \ B,\mu)$, and 
since $P' \circ (B,\mu,X)^{-1} = P \circ (B,\mu,X)^{-1}$ and 
$\mu^x = P(X \in \cdot \ | \ B,\mu)$ imply $\mu^x = P'(X \in \cdot \ | \ B,\mu)$, we deduce that $P'(\bar{\mu}^x = \mu^x) = 1$. 

Now define $\overline{P} := P' \circ (\xi,B,W,\bar{\mu},\Lambda,X)^{-1}$; we will show that this is in fact the MFG solution we are looking for. Indeed, from $P'(\bar{\mu}^x=\mu^x)=1$
it follows that the canonical processes verify the state equation \eqref{def:SDE} under $\overline{P}$. 
Hence, in light of the above considerations, we may apply Lemma  \ref{le:a.s.compatible} to conclude that $\overline{P}$ is a MFG pre-solution (with weak strict control). In particular, we have $\overline{P} \in \RC\A(\overline{\rho})$, where $\overline{\rho} := \overline{P} \circ (\xi,B,W,\mu)^{-1} = \rho \circ (\xi,B,W,\bar{\mu})^{-1}$. Moreover, $P' \circ (\mu^x,\Lambda,X)^{-1} = \overline{P}\circ (\mu^x,\Lambda,X)^{-1}$ clearly implies $J(P') = J(\overline{P})$. Although $P' \in \RC\A^\star(\rho)$, this does not immediately imply that $\overline{P} \in \RC\A^\star(\overline{\rho})$, and we must complete the proof carefully.

Fix $\overline{Q} \in \A_a(\overline{\rho})$, where we recall the definition of $\A_a(\rho)$ from Section \ref{subse:optimality}. That is
\[
\overline{Q} = \overline{\rho} \circ (\xi,B,W,\mu,\phi(\xi,B,W,\mu))^{-1} = \rho \circ (\xi,B,W,\bar{\mu},\phi(\xi,B,W,\bar{\mu}))^{-1}
\]
for some adapted function $\phi : \Omega_0 \times \P^p(\X) \rightarrow \V$. Define
\[
\overline{Q}' :=  \rho \circ (\xi,B,W,\mu,\phi(\xi,B,W,\bar{\mu}))^{-1}.
\]
Then, since $\phi$ is adapted and $\F^{\bar{\mu}}_t \subset \F^{B,\mu}_t$ up to null sets, we conclude that $\overline{Q}'$ is compatible, or $\overline{Q}' \in \A(\rho)$.
Since $P'(\bar{\mu}^x=\mu^x)=1$, we have $\RC(\overline{Q}) \circ (\mu^x,\Lambda,X)^{-1} = \RC(\overline{Q}') \circ (\mu^x,\Lambda,X)^{-1}$. Thus $P' \in \RC\A^{\star}(\rho)$ implies
\begin{align*}
J(\overline{P}) &= J(P') \ge J(\RC(\overline{Q}')) = J(\RC(\overline{Q})).
\end{align*}
Since this holds for all $\overline{Q} \in \A_a(\overline{\rho})$, we finally conclude that $\overline{P} \in \RC\A^{\star}(\overline{\rho})$ by combining the density results of Lemmas \ref{le:limp'1} and \ref{le:limp'2}.
\end{proof}

\begin{remark}
It is possible to strengthen this result slightly to conclude that there exists a relaxed MFG solution with weak strict control $\alpha_t$ adapted to $\F^{\xi,B,W,\mu,X}_t$. Indeed, the argument could proceed along the lines of Proposition \ref{pr:strongcontrol} or by way of martingale problems, as in \cite{elkaroui-compactification,haussmannlepeltier-existence}.
\end{remark}

\subsection{Strong controls} \label{se:strongcontrols}
A strong but common linearity assumption on the coefficients $b$, $\sigma$, and $\sigma_0$ allows us to find strong controls.

\begin{assumption}{\textbf{D}} \label{assumption:D} {\ }
\begin{enumerate}
\item[(D.1)] $A$ is a convex subset of an Euclidean space, and the state coefficients are affine in $(x,a)$, in the following form:
\begin{align*}
&b(t,x,\mu,a) = b^1(t,\mu)x + b^2(t,\mu)a + b^3(t,\mu), 
\\
&\sigma(t,x,\mu) = \sigma^1(t,\mu)x + \sigma^2(t,\mu), \ \sigma_0(t,x,\mu) = \sigma^1_0(t,\mu)x + \sigma^1_0(t,\mu),
\end{align*}
\item[(D.2)] The objective functions are concave in $(x,a)$; that is, the maps $(x,a) \mapsto f(t,x,\mu,a)$ and $x \mapsto g(x,\mu)$ are concave for each $(t,\mu)$.
\item[(D.3)] $f$ is \emph{strictly} concave in $(x,a)$.
\end{enumerate}
\end{assumption}

\begin{proposition} \label{pr:strongcontrol}
Under assumptions \ref{assumption:A} and (D.1-2), then 
\begin{enumerate}
\item[(1)] For each $\rho \in \P^p_{c}[(\Omega_{0},\W_{\lambda}) \leadsto 
{\mathcal P}^p(\X)]$ there exists a strong optimal control; that is $\A^\star(\rho)$ contains a strong control.
\end{enumerate}
If also (D.3) holds, then 
\begin{enumerate}
\item[(2)] For each $\rho$ the optimal control is unique; that is $\A^\star(\rho)$ is a singleton for each $\rho$.
\item[(3)] Every weak MFG solution with weak control is a weak MFG solution with strong control.
\end{enumerate}
\end{proposition}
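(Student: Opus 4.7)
The plan is to prove the three assertions in sequence by a ``mimic by optional projection'' argument built on the affine structure (D.1) and Jensen's inequality for the concavity (D.2), with parts (2) and (3) falling out of the strict concavity in (D.3) by forcing equality in Jensen's inequality.

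For part (1), I would fix $\rho$ and an arbitrary $Q \in \A(\rho)$, set $P := \RC(Q)$, and introduce the $(\F^{\xi,B,W,\mu,\Lambda}_t)$-progressive barycenter $\tilde\alpha_t := \int_A a\,\Lambda_t(da)$ together with a progressively measurable version $\bar\alpha_t := \E^P[\tilde\alpha_t \mid \F^{\xi,B,W,\mu}_t]$ of its optional projection onto the smaller filtration. Convexity of $A$ in (D.1) ensures $\bar\alpha_t \in A$, and compatibility of $Q$ with $\rho$ (the conditional independence of $\F^\Lambda_s$ and $\F^{\xi,B,W,\mu}_T$ given $\F^{\xi,B,W,\mu}_s$) upgrades the defining identity to
\[
\E^P[\tilde\alpha_s \mid \F^{\xi,B,W,\mu}_T] = \bar\alpha_s, \quad s \in [0,T].
\]
Let $X'$ be the unique strong solution of the state SDE with $\bar\alpha$ in place of $\Lambda$, and set $P' := P \circ (\xi,B,W,\mu,dt\,\delta_{\bar\alpha_t}(da),X')^{-1}$; then $P' \in \RC\A(\rho)$ and corresponds to a strong strict control. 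The crucial point is the identification $\bar X_t := \E^P[X_t \mid \F^{\xi,B,W,\mu}_T] = X'_t$. By (D.1), the difference $Y := X - X'$ satisfies the linear SDE
\[
dY_t = b^1(t,\mu^x_t)Y_t\,dt + b^2(t,\mu^x_t)(\tilde\alpha_t - \bar\alpha_t)\,dt + \sigma^1(t,\mu^x_t)Y_t\,dW_t + \sigma^1_0(t,\mu^x_t)Y_t\,dB_t, \quad Y_0 = 0,
\]
whose variation-of-constants representation $Y_t = Z_t\int_0^t Z_s^{-1}b^2(s,\mu^x_s)(\tilde\alpha_s - \bar\alpha_s)\,ds$ (with $Z$ the $(\F^{\xi,B,W,\mu}_t)$-adapted Dol\'eans exponential of the homogeneous part) pushes the stochastic-integral content entirely into $Z$. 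Pulling the conditional expectation through the ordinary Lebesgue integral and invoking the compatibility identity gives $\E^P[Y_t \mid \F^{\xi,B,W,\mu}_T] = 0$, as required. Two successive Jensen inequalities then yield $J(P) \le J(P')$: first, concavity of $f$ in $a$ applied $\Lambda_t$-wise passes from $\int_A f\,d\Lambda_t$ to $f(t,X_t,\mu^x_t,\tilde\alpha_t)$; second, joint concavity of $f$ and $g$ in $(x,a)$ applied under $\E^P[\cdot \mid \F^{\xi,B,W,\mu}_T]$ passes to $f(t,X'_t,\mu^x_t,\bar\alpha_t)$ and $g(X'_T,\mu^x_T)$. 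The same construction applied to a maximizing sequence, combined with the compactness arguments of Section \ref{se:weaksolutions}, produces a strong optimal control, completing (1).

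For part (2), assume (D.3) and apply the construction above to any $Q \in \A^\star(\rho)$. Optimality forces equality in both Jensen steps: strict concavity of $f$ in $a$ forces $\Lambda_t = \delta_{\tilde\alpha_t}$ for a.e.\ $(t,\omega)$, and strict concavity of $f$ in $(x,a)$ forces $(X_t,\tilde\alpha_t) = (X'_t,\bar\alpha_t)$ a.s., whence $\tilde\alpha_t = \bar\alpha_t$. Hence every optimal control is strong strict of the form $\Lambda_t = \delta_{\bar\alpha_t}$ with $\bar\alpha$ adapted to $(\F^{\xi,B,W,\mu}_t)$. Among such strong-strict controls, two distinct optimizers $\bar\alpha^1 \ne \bar\alpha^2$ would yield, by convexity of $A$ and the linearity in (D.1), an admissible midpoint with state $\tfrac12(X^1+X^2)$; strict concavity of $f$ would then make its reward strictly exceed the average of the two, contradicting optimality. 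Thus $\A^\star(\rho)$ is a singleton.

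Part (3) is now immediate: given any weak MFG solution $(\Omega,\F,P,B,W,\mu,\Lambda,X)$ with weak control, transport its law to the canonical space to obtain a measure in $\RC\A^\star(\rho)$ with $\rho := P \circ (\xi,B,W,\mu)^{-1}$; by (2) this measure is strong strict, so $\Lambda = dt\,\delta_{\alpha_t}(da)$ for some $(\F^{\xi,B,W,\mu}_t)$-progressive $\alpha$, which pulls back to the original space as the desired strong control. The main technical obstacle is the identity $\bar X = X'$ driving part (1): this is precisely where the affine structure of (D.1) and the conditional-independence content of compatibility must interact so that conditional expectation commutes with the stochastic integrals against $W$ and $B$. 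The variation-of-constants representation is the cleanest route since it reduces the stochastic content to an ordinary Lebesgue integral with $(\F^{\xi,B,W,\mu}_t)$-adapted coefficients, but one could equally invoke a projection lemma for It\^o integrals under compatible sub-filtrations in the spirit of \cite{kurtz-yw2013}.
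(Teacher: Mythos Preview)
Your overall strategy---take the barycenter of $\Lambda$, project it onto the completion of $(\F^{\xi,B,W,\mu}_t)_{t \in [0,T]}$, and then invoke Jensen's inequality via the joint concavity (D.2)---is exactly the paper's approach, and parts (2) and (3) match the paper essentially line for line. The one genuine technical difference lies in how you establish that the projected state $\bar X$ coincides with the solution $X'$ driven by $\bar\alpha$: the paper takes conditional expectation directly in the state SDE and uses compatibility to upgrade $\E[X_s \mid \widetilde\F^{\xi,B,W,\mu}_s]$ to $\E[X_s \mid \widetilde\F^{\xi,B,W,\mu}_t]$ for $s\le t$, whereas you exploit the affine structure to write $Y=X-X'$ via the variation-of-constants formula, pushing all stochastic integration into the $(\F^{\xi,B,W,\mu}_t)$-adapted fundamental solution $Z$ so that conditioning the remaining Lebesgue integral is elementary. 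Your route is a clean alternative; it sidesteps the delicate point of commuting conditional expectation with the stochastic integrals against $W$ and $B$, at the cost of invoking the (multidimensional) variation-of-constants formula for linear SDEs with finite-variation forcing.

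There is, however, a small gap at the close of part (1). Having shown $J(P)\le J(P')$ for arbitrary $P\in\RC\A(\rho)$, you propose to ``apply the same construction to a maximizing sequence, combined with the compactness arguments of Section~\ref{se:weaksolutions}''. This does not immediately deliver a strong optimizer: the projected controls $P'_n$ are strong and maximizing, but limits of strong (strict, adapted) controls need not be strong, so passing to a $\P^p(\Omega)$-limit could lose adaptedness of the control to $(\F^{\xi,B,W,\mu}_t)$. The paper avoids this by first citing that $\RC\A^\star(\rho)$ is nonempty (via upper semicontinuity of $J$ from Lemma~\ref{le:jcontinuous} together with the compactness/coercivity arguments of Section~\ref{se:weaksolutions}), picking an optimal $P$, and projecting \emph{that} single $P$ to obtain a strong $P'$ with $J(P')\ge J(P)$, hence $P'\in\RC\A^\star(\rho)$. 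Replacing your maximizing-sequence sentence with this one-line argument closes the gap.
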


\begin{proof} \

\textit{Proof of (1).}
Let $P \in \RC\A^\star(\rho)$, which is nonempty by Lemma \ref{le:jcontinuous}. Under assumption \ref{assumption:D}, the state equation writes as
\begin{align}
X_t = X_0 &+ \int_0^t\left(b^1(s,\mu^x_s)X_s + b^2(s,\mu^x_s) \alpha_{s}
 + b^3(t,\mu^x_s)\right)ds \nonumber \\
	&+ \int_0^t\left(\sigma^1(s,\mu^x_s)X_s + \sigma^2(s,\mu^x_s)\right)dW_s + \int_0^t\left(\sigma^1_0(s,\mu^x_s)X_s + \sigma^2_0(s,\mu^x_s)\right)dB_s, \label{pf:strongcontrol1}
\end{align}
where we have let 
$\alpha_{s} :=
\int_Aa\Lambda_s(da)$.
Let $(\widetilde{\F}_{t}^{\xi,B,W,\mu})_{t \in [0,T]}$ denote the $P$-completion of the filtration $(\F_{t}^{\xi,B,W,\mu})_{t \in [0,T]}$.
By optional projection (see \cite[Appendix A.3]{kurtz-conditionalmtgproblem} for a treatment without right-continuity of the filtration),
there exist $(\widetilde{\mathcal F}_{t}^{\xi,B,W,\mu})_{t \in [0,T]}$-optional (and thus progressive) processes $(\widetilde{X}_{t})_{t \in [0,T]}$ and $(\widetilde{\alpha}_{t})_{t \in [0,T]}$ such that
such that, for each $t \in [0,T]$, 
\begin{align*}
\widetilde{X}_t &:= \E\bigl[ X_t |\widetilde{\F}_{t}^{\xi,B,W,\mu}\bigr], \quad \widetilde{\alpha}_t := \E\bigl[ \alpha_{t} |\widetilde{\F}^{\xi,B,W,\mu}_{t}\bigr], \ a.s. 
\end{align*}
In fact, it holds that for each $0 \le s \le t \le T$,
\begin{align}
\widetilde{X}_s &:= \E\bigl[ X_s |\widetilde{\F}_t^{\xi,B,W,\mu}\bigr],  \quad
\widetilde{\alpha}_s := \E\bigl[ \alpha_s |\widetilde{\F}^{\xi,B,W,\mu}_{t}\bigr],  \ a.s. \label{pf:strongcontrol1c}
\end{align}
Indeed, since $(\alpha_s,X_s)$ is $\F^{\xi,B,W,\mu,\Lambda,X}_s$-measurable, and since the solution of the state equation \ref{def:relaxedSDE-weak} is strong, we know that $(\alpha_s,X_s)$ is a.s. $\F^{\xi,B,W,\mu,\Lambda}_s$-measurable. By compatibility, $\F^{\xi,B,W,\mu}_t$ and $\F^\Lambda_s$  are conditionally independent given $\F^{\xi,B,W,\mu}_s$, and thus $\widetilde{\F}^{\xi,B,W,\mu}_t$ and the completion of $\F^\Lambda_s$ are conditionally independent given $\widetilde{\F}^{\xi,B,W,\mu}_s$. This implies \eqref{pf:strongcontrol1c}.

Now, for a given $t \in [0,T]$, 
take the conditional expectation with respect to $\widetilde{\F}_{t}^{\xi,B,W,\mu}$ in \eqref{pf:strongcontrol1}. 
Using a conditional version of Fubini's theorem together with \eqref{pf:strongcontrol1c}, we get that for each $t \in [0,T]$ it holds $P$-a.s. that 
\begin{equation}
\label{pf:strongcontrol1b}
\begin{split}
\widetilde{X}_t = \xi &+ \int_0^t\left(b^1(s,\mu^x_s)\widetilde{X}_s + b^2(s,\mu^x_s)\widetilde{\alpha}_s + b^3(t,\mu^x_s)\right)ds \\
	&+ \int_0^t\left(\sigma^1(s,\mu^x_s)\widetilde{X}_s + \sigma^2(s,\mu^x_s)\right)dW_s + \int_0^t\left(\sigma^1_0(s,\mu^x_s)\widetilde{X}_s + \sigma^2_0(s,\mu^x_s)\right)dB_s.
	\end{split}
\end{equation}
Since the right-hand side is continuous a.s. and the filtration is complete, we replace $\widetilde{X}$ with an a.s.-continuous modification, so that \eqref{pf:strongcontrol1b} holds for all $t \in [0,T]$, $P$-a.s. That is, the processes on either side of the equation are indistinguishable. 

Now define
$\widetilde{P} := P \circ (\xi,B,W,\mu,dt\delta_{\widetilde{\alpha}_t}(da),\widetilde{X})^{-1}$. It is clear from \eqref{pf:strongcontrol1b} that $\widetilde{P} \in \RC\A(\rho)$.
Jensen's inequality provides
\begin{align}
J(P) 	&\le \E^P\biggl[\int_0^Tf\bigl(t,X_t,\mu^x_t,\alpha_{t}\bigr)dt + g(X_T,\mu^x_T)\biggr] \label{pf:weakstrong1} \\
	&= \E^P\biggl[\int_0^T\E^P\bigl[ f\bigl(t,X_t,\mu^x_t,\alpha_{t}\bigr)\big|\widetilde{\F}^{\xi,B,W,\mu}_t\bigr]dt + \E^P\bigl[ g(X_T,\mu^x_T) |\widetilde{\F}^{\xi,B,W,\mu}_T\bigr]\biggr] \nonumber \\
	&\le \E^P\left[\int_0^Tf(t,\widetilde{X}_t,\mu^x_t,\widetilde{\alpha}_t)dt + g(\widetilde{X}_T,\mu^x_T)\right] \label{pf:weakstrong2} = J(\widetilde{P}).
\end{align}
Hence $\widetilde{P} \in \RC\A^\star(\rho)$, and (1) is proven.

\textit{Proof of (2) and (3)}
Now suppose assumption (D.3) holds. We prove only (2), from which (3) follows immediately. Unless $\Lambda$ is already a strict control, then inequality \eqref{pf:weakstrong1} is strict, and unless $\int_Aa\Lambda_t(da)$ is already 
$(\widetilde{\F}^{\xi,B,W,\mu}_t)_{t \in [0,T]}$-adapted, the inequality \eqref{pf:weakstrong2} is strict: $J(\widetilde{P}) > J(P)$. This proves that all optimal controls must be strict and $(\widetilde{\F}^{\xi,B,W,\mu}_t)_{t \in [0,T]}$-adapted. Now suppose we have two strict adapted optimal controls, which without loss of generality we construct on the same space $(\Omega_0 \times \P(\X),(\widetilde{\F}^{\xi,B,W,\mu}_t)_{t \in [0,T]},\rho)$. That is,
\begin{align*}
X^i_t = X_0 &+ \int_0^t\left(b^1(s,\mu^x_s)X^i_s + b^2(s,\mu^x_s)\alpha^i_s + b^3(s,\mu^x_s)\right)ds \\
	&+ \int_0^t\left(\sigma^1(s,\mu^x_s)X^i_s + \sigma^2(s,\mu^x_s)\right)dW_{s}+ \int_0^t\left(\sigma^1_0(s,\mu^x_s)X^i_s + \sigma^2_0(s,\mu^x_s)\right)dB_{s}, \ i=1,2,
\end{align*}
where $\alpha^i$ is $\F^{X_0,B,W,\mu}_t$-adapted. Define 
\begin{align*}
X^3_t &:= \frac{1}{2}X^1_t + \frac{1}{2}X^2_t, \quad 
\alpha^3_t := \frac{1}{2}\alpha^1_t + \frac{1}{2}\alpha^2_t.
\end{align*}
Again taking advantage of the linearity of the coefficients, it is straightforward to check that $(X^3,\alpha^3)$ also solve the state equation. Unless $\alpha^1 = \alpha^2$ holds $dt \otimes dP$-a.e., the strict concavity and Jensen's inequality easily imply that this new control achieves a strictly larger reward than either $\alpha^1$ or $\alpha^2$, which is a contradiction.
\end{proof}

\section{Counterexamples} \label{se:counterexamples}
In this section, simple examples are presented to illustrate two points. First, we demonstrate why we cannot expect existence of a strong MFG solution at the level of generality allowed by assumption \ref{assumption:A}. Second, by providing an example of a mean field game which fails to admit even a weak solution, we show that the exponent $p$ in both the upper and lower bounds of $f$ and $g$ cannot be relaxed to $p'$.

\subsection{Nonexistence of strong solutions}
Suppose $\sigma$ is constant, $g \equiv 0$, $p' = 2$, $p = 1$, $A = \R^d$, and choose the following data:
\begin{align*}
b(t,x,\mu,a) = a, \quad
f(t,x,\mu,a) = a^\top\tilde{f}(t,\bar{\mu}) - \frac{1}{2}|a|^2, \quad
\sigma_0(t,x,\mu) = \tilde{\sigma}_0(t,\bar{\mu}),
\end{align*}
for some bounded continuous functions $\tilde{f} : [0,T] \times \R^d \rightarrow \R^d$ and $\tilde{\sigma}_0 : [0,T] \times \R^d \rightarrow \R^{d \times m_0}$. Here we have abbreviated 
$\bar{\mu} := \int_\R z\mu(dz)$ for $\mu \in \P^1(\R)$. 
Proposition \ref{pr:strongcontrol} ensures that there exists a weak MFG solution $P$ with strong control. That is
(with the same notations as in Proposition \ref{pr:strongcontrol}), there 
exists an $(\widetilde{\F}^{\xi,B,W,\mu}_t)_{t \in [0,T]}$-progressive $\R^d$-valued process 
$(\alpha^\star_t)_{t \in [0,T]}$ such that
\[
P(\Lambda = dt\delta_{\alpha^\star_t}(da)) = 1, \quad \E^P\int_0^1|\alpha^\star_t|^2dt < \infty.
\]
If $(\alpha_t)_{t \in [0,T]}$ is any bounded $(\widetilde{\F}_{t}^{\xi,B,W,\mu})_{t \in [0,T]}$-progressive $\R^d$-valued processes, then optimality of $\alpha^\star$ implies
\[
\E\int_0^1\Bigl((\alpha^\star_t)^\top\tilde{f}(t,\bar{\mu}^x_t) - \frac{1}{2}|\alpha^\star_t|^2\Bigr)dt \ge 
\E\int_0^1\Bigl(\alpha_t^\top\tilde{f}(t,\bar{\mu}^x_t) - \frac{1}{2}|\alpha_t|^2\Bigr)dt. 
\]
Hence, $\alpha^\star_t = \tilde{f}(t,\bar{\mu}^x_t)$ holds $dt \otimes dP$-a.e. The optimally controlled state process is given by
\[
dX^\star_t = \tilde{f}(t,\bar{\mu}^x_t)dt + \sigma dW_t + \tilde{\sigma}_0(t,\bar{\mu}^x_t)dB_t.
\]
Conditioning on $(B,\mu)$ and using the fixed point property $\bar{\mu}^x_t = \E[X_t  |  B,\mu]$ yields
\[
d\bar{\mu}^x_t = \tilde{f}(t,\bar{\mu}^x_t)dt + \tilde{\sigma}_0(t,\bar{\mu}^x_t) dB_t, \ \ \bar{\mu}^x_0 = \E[X_0].
\]
We have only assumed that $\tilde{f}$ and $\tilde{\sigma}_0$ are bounded and continuous. For the punchline, note that 
uniqueness in distribution may hold for such a SDE even if it fails to possess a strong solution, in which case $\bar{\mu}^x_t$ cannot be adapted to the completion of $\F^B_t$ and the MFG solution cannot be strong. Such cases are not necessarily pathological; see Barlow \cite{barlow-sdenostrongsolution} for examples in dimension $d=1$ with $\tilde{f} \equiv 0$ and $\sigma_0$ bounded above and below away from zero.

\subsection{Nonexistence of weak solutions}
Unfortunately, assumption \ref{assumption:A} does not cover linear-quadratic models with quadratic objectives in $x$ or $\mu$. That is, we do not allow
\[
f(t,x,\mu,a) = -|a|^2 - c\biggl|x + c'\int_{\R^d} z\mu(dz)\biggr|^2, \ c,c' \in \R.
\]
Even when $c > 0$, so that $f$ and $g$ are bounded from above, we cannot expect a general existence result if $p'=p$. This was observed in \cite{carmonadelaruelachapelle-mkvvsmfg,lacker-mfgcontrolledmartingaleproblems} 
 in the case $\sigma_0=0$; the authors showed that only \emph{certain} linear-quadratic mean field games admit (strong) solutions. The following example reiterates this point in the setting of common noise and weak solutions, extending the example of \cite[Section 7]{lacker-mfgcontrolledmartingaleproblems}.

Consider constant volatilities $\sigma$ and $\sigma_0$, $d=1$, $p'=p=2$, $A = \R$, and and the following data:
\begin{align*}
b(t,x,\mu,a) = a, \quad
f(t,x,\mu,a) = -a^2, \quad
g(x,\mu) = -(x + c\bar{\mu})^2, \ c \in \R,
\end{align*}
where $\bar{\mu} := \int_{\R^d} z\mu(dz)$ for $\mu \in \P^1(\R^d)$. Choose $T > 0$, $c \in \R$, and $\lambda \in \P^2(\R)$ such that
\begin{align*}
c=(1-T)/T, \quad\quad T \neq 1, \quad\quad \bar{\lambda} \neq 0.
\end{align*}
Assumptions \ref{assumption:A}(1-5) hold with the one exception that the assumption $p' > p$ is violated. 
Suppose $P$ is a weak MFG solution with weak control and
then define $y_t := \E\bar{\mu}_t^x$. 
Arguing as in \cite{lacker-mfgcontrolledmartingaleproblems}, 
we get 
\begin{align*}
y_T =  \frac{y_0}{1-T} + y_T,
\end{align*}
which implies $y_0 = 0$ and which contradicts $\bar{\lambda} \neq 0$ since $y_0 = \E\bar{\mu}_0^x = \bar{\lambda}$. Hence, for this particular choice of data, there is no weak solution.

It would be interesting to find additional structural conditions under which existence of a solution holds in the case 
$p'=p$. This question has been addressed in \cite{carmonadelarue-mfg}
when $p'=p=2$, $b$ is linear, $\sigma$ is constant, $f$ and $g$ are convex
in $(x,\alpha)$ and without common noise. Therein, the strategy consists in solving approximating equations, for which the related $p$ is indeed less than $2$, and then in passing to the limit. In order to guarantee the tightness of the approximating solutions, the authors introduce a so-called \textit{weak mean-reverting condition}, which somehow generalizes the classical conditions for handling linear-quadratic MFG. 
It reads $\langle x,\partial_{x}g(0,\delta_{x}) \rangle \leq  c (1+ \vert x \vert)$
and $\langle x,\partial_{x}f(t,0,\delta_{x},0) \rangle \leq  c (1+ \vert x \vert)$, where 
$\delta_{x}$ is the Dirac mass at point $x$. This clearly imposes some restriction on the coefficients as, in full generality (when $p=p'=2$), $\partial_{x} g(0,\delta_{x})$ and
$\partial_{x} f(t,0,\delta_{x},0)$ are expected to be of order 1 in $x$. 
The \textit{weak mean-reverting condition} assures that  
the expectations of the approximating solutions remain bounded along the approximation, which actually suffices to prove tightness. We feel that the same strategy could be applied to our
setting by considering the conditional expectation given the common noise instead of the expectation itself. Anyhow, in order to limit the length of the paper, we refrain from 
discussing further this question.

\section{Uniqueness} \label{se:uniqueness}
We now discuss uniqueness of solutions. 
The goal is twofold. Inspired by the Yamada-Watanabe theory for weak and strong solutions to standard 
stochastic differential equations, we first claim that every weak MFG solution is actually a strong MFG solution provided 
the MFG solutions are \textit{pathwise unique}. This is a quite important point from the practical point of view as it guarantees that 
the equilibrium measure
$\mu^x$ is adapted to the common noise $B$. As an illustration, we prove a modest uniqueness result, inspired by the
earlier works by  Lasry and Lions \cite{lasrylionsmfg}. 
When there is no mean field term in the state coefficients, when the optimal controls are unique, and when the monotonicity condition of Lasry and Lions \cite{lasrylionsmfg} holds, we indeed have a form of \emph{pathwise uniqueness}.

\subsection{Pathwise uniqueness and uniqueness in law}
The starting point of our analysis is to notice that the law of a weak MFG solution is really determined by the law of $(B,\mu)$. 
Indeed, for
an element $\gamma \in \P^p(\C^{m_{0}} \times \P^p(\X))$, 
we can
define $M\gamma \in \P(\Omega)$ by
\[
M\gamma(d\xi,d\beta,dw,d\nu,dq,dx) = \gamma(d\beta,d\nu)\nu(dw,dq,dx)\delta_{x_{0}}(d\xi).
\]
We will say $\gamma$ is a \emph{MFG solution basis} if
the distribution $M\gamma$ together with the canonical 
processes on $\Omega$ form a weak MFG solution. 
We say \emph{uniqueness in law} holds for the MFG if there is at most one MFG solution basis, or equivalently if any two weak MFG solutions induce the same law on $\Omega$.
Given two MFG solution bases $\gamma^1$ and $\gamma^2$, we say $(\Theta,(\G_t)_{t \in [0,T]},Q,B,\mu^1,\mu^2)$ is a \emph{coupling} of $\gamma^1$ and $\gamma^2$ if:
\begin{enumerate}
\item $(\Theta,(\G_t)_{t \in [0,T]},Q)$ is a probability space with a complete filtration.
\item $B$ is a $(\G_t)_{t \in [0,T]}$-Wiener process on $\Theta$.
\item For $i=1,2$, $\mu^i : \Theta \rightarrow \P^p(\X)$ is such that, for each $t \in [0,T]$ and $C \in \F^{W,\Lambda,X}_t$, $\mu^i(C)$ is $\G_t$-measurable.
\item For $i=1,2$, $Q \circ (B,\mu^i)^{-1} = \gamma^i$.
\item $\mu^1$ and $\mu^2$ are conditionally independent given $B$.
\end{enumerate}

Suppose that for any coupling $(\Theta,(\G_t)_{t \in [0,T]},Q,B,\mu^1,\mu^2)$ of any two MFG solution bases $\gamma^1$ and $\gamma^2$ we have $\mu^1=\mu^2$ a.s. Then we say \emph{pathwise uniqueness} holds for the mean field game. The following proposition essentially follows from Theorem 1.5 and Lemma 2.10 of \cite{kurtz-yw2013}, but we include the proof since we use slightly different notions of compatibility and of pathwise uniqueness.

\begin{proposition} \label{pr:yamadawatanabe}
Suppose assumption \ref{assumption:A} and pathwise uniqueness hold. Then there exists a unique in law weak MFG solution with weak control, and it is in fact a strong MFG solution with weak control. 
\end{proposition}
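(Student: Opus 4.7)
The plan is to follow the Yamada--Watanabe template as formalized in \cite{kurtz-yw2013}, adapted here to the MFG setting in which the ``output'' randomness is encoded in the random measure $\mu$ rather than in a state process $X$ alone. Existence of a weak MFG solution is already provided by Theorem \ref{th:existence}, so I only need to prove (a) uniqueness in law, and (b) that the measure $\mu$ is $\sigma(B)$-measurable up to $P$-null sets, which automatically upgrades any weak MFG solution to a strong one since then $\sigma(B,\mu) = \sigma(B)$ modulo nullsets and the fixed-point relation $\mu = P((W,\Lambda,X) \in \cdot \vert B,\mu)$ becomes $\mu = P((W,\Lambda,X) \in \cdot \vert B)$.

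The central construction will be a coupling of two (possibly identical) MFG solution bases $\gamma^1, \gamma^2 \in \P^p(\C^{m_0} \times \P^p(\X))$. Since both have $B$-marginal $\W^{m_0}$, I will use regular conditional probabilities to write $\gamma^i(d\beta,d\nu) = \W^{m_0}(d\beta)\,\gamma^i(d\nu \vert \beta)$, and set
\begin{equation*}
Q(d\beta, d\nu^1, d\nu^2) := \W^{m_0}(d\beta)\,\gamma^1(d\nu^1 \vert \beta)\,\gamma^2(d\nu^2 \vert \beta)
\end{equation*}
on $\Theta := \C^{m_0} \times \P^p(\X)^2$. Let $(B,\mu^1,\mu^2)$ be the canonical projections and let $(\G_t)_{t \in [0,T]}$ be the $Q$-completion of $\sigma(B_s : s \le t) \vee \sigma(\mu^1(C_1),\mu^2(C_2) : C_i \in \F^{W,\Lambda,X}_t)$. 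Items (1), (3), (4), (5) in the definition of coupling are built into the construction; the only nontrivial point is (2), that $B$ remains a $(\G_t)$-Wiener process after enlarging with the second measure.

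Verifying (2) is where I expect the main technical difficulty to lie. The compatibility hypothesis hidden in Definition \ref{def:mfg:weak:weak} ensures that under each $\gamma^i$ the future increments of $B$ are independent of $\sigma(B_{[0,s]}) \vee \F^{\mu^i}_s$. From this I will deduce that for every $C \in \F^{W,\Lambda,X}_s$ the conditional expectation $E_{\gamma^i}[\mu^i(C) \vert B]$ is in fact $\sigma(B_{[0,s]})$-measurable. Combining this with the conditional independence $\mu^1 \perp \mu^2 \vert B$ inherent to $Q$, a short test-function computation shows that $B_t - B_s$ is $Q$-independent of $\sigma(B_{[0,s]}) \vee \F^{\mu^1}_s \vee \F^{\mu^2}_s$, which is enough to conclude (2).

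Once the coupling is in place the remainder is routine. Pathwise uniqueness applied to $(\Theta,(\G_t),Q,B,\mu^1,\mu^2)$ forces $\mu^1 = \mu^2$ $Q$-a.s.; taking $(B,\mu^i)$-marginals immediately gives $\gamma^1 = \gamma^2$, proving uniqueness in law. For the strong-solution statement I will specialize to $\gamma^1 = \gamma^2 = \gamma$ equal to the unique basis; then $\mu^1 = \mu^2$ $Q$-a.s.\ together with their conditional independence given $B$ yields, for every Borel $A \subset \P^p(\X)$,
\begin{equation*}
Q(\mu^1 \in A \vert B) = Q(\mu^1 \in A,\ \mu^2 \in A \vert B) = Q(\mu^1 \in A \vert B)^2,
\end{equation*}
so the conditional probability is $\{0,1\}$-valued and $\mu$ is measurable with respect to the $Q$-completion of $\sigma(B)$. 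Transferring this measurability back to the original space gives the strong fixed-point identity $\mu = P((W,\Lambda,X) \in \cdot \vert B)$.
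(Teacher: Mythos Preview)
Your proposal is correct and follows essentially the same approach as the paper: construct the conditionally-independent coupling on $\C^{m_0} \times \P^p(\X)^2$, verify that $B$ remains a Wiener process for the enlarged filtration using exactly the reduction $\E^Q[\mu^i(C)\,|\,B] = \E^Q[\mu^i(C)\,|\,\F^B_t]$ you describe, and invoke pathwise uniqueness to get $\mu^1=\mu^2$ and hence uniqueness in law. The one difference is that the paper dispatches the ``strong'' conclusion by citing \cite{kurtz-yw2013} and \cite{jacodmemin-weaksolution}, whereas you give the explicit squaring argument $Q(\mu\in A\,|\,B) = Q(\mu\in A\,|\,B)^2$; this is the same idea underlying those references and is a welcome elaboration.
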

\begin{proof}
Let $\gamma^1$ and $\gamma^2$ be any two MFG solution bases. Let $\Theta = \C^{m_0} \times \P^p(\X) \times \P^p(\X)$, and let $(B,\mu^1,\mu^2)$ denote the identity map on $\Theta$. Let $Q$ be the unique probability measure on $\Theta$ under which $Q \circ (B,\mu^i)^{-1} = \gamma^i$ for $i=1,2$ and also $\mu^1$ and $\mu^2$ are conditionally independent given $B$.
Define the $(\G_t)_{t \in [0,T]}$ to be the $Q$-completion of the filtration
\[
\left(\sigma(B_s,\mu^1(C),\mu^2(C) : s \le t, \ C \in \F^{W,\Lambda,X}_t)\right)_{t \in [0,T]}.
\]
Then $(\Theta,(\G_t)_{t \in [0,T]},Q,B,\mu^1,\mu^2)$ satisfies conditions (1) and (3-5) of the definition of a coupling. We will check that in a moment that (2) necessarily holds as well. It then follows from pathwise uniqueness that $\mu^1=\mu^2$ almost surely, which in turn implies $\gamma^1=\gamma^2$. Conclude in the usual way (as in \cite[Theorem 1.5]{kurtz-yw2013} or \cite[Theorem 3.20]{jacodmemin-weaksolution}) that this unique solution is in fact a strong solution.

To see that $B$ is a $(\G_t)_{t \in [0,T]}$-Wiener process, we need only to check that $\sigma(B_s - B_t : s \in [t,T])$ is independent of $\G_t$ for each $t \in [0,T]$. Fix $t \in [0,T]$ and let $Z_t$, $Z_{t+}$, $Y^1_t$, and $Y^2_t$ be bounded random variables, measurable with respect to $\F^B_t := \sigma(B_s : s \le t)$, $\sigma(B_s - B_t : s \in [t,T])$, $\sigma(\mu^1(C) : C \in \F^{W,\Lambda,X}_t)$, and $\sigma(\mu^2(C) : C \in \F^{W,\Lambda,X}_t)$, respectively. Since $\gamma^i$ is a MFG solution basis, we know that $B$ is a Wiener process with respect to the filtration
\[
\left(\sigma(B_u,\mu^i(C) : u \le s, \ C \in \F^{W,\Lambda,X}_s)\right)_{s \in [0,T]}.
\]
Thus $Y^i_t$ is independent of $\sigma(B_s - B_t : s \in [t,T])$. Conditional independence of $\mu^1$ and $\mu^2$ implies
\begin{align*}
\E^Q\left[\left.Y^1_tY^2_t\right| B \right] = \E^Q\left[\left.Y^1_t\right| B\right]\E^Q\left[\left.Y^2_t\right| B\right] = \E^Q\left[\left.Y^1_t\right|\F^B_t\right]\E^Q\left[\left.Y^2_t\right|\F^B_t\right].
\end{align*}
Thus, since $Z_{t+}$ is independent of $\F^B_t$, 
\begin{align*}
\E^Q\left[Z_{t+}Z_tY^1_tY^2_t\right] &= \E^Q\left[Z_{t+}Z_t\E^Q\left[\left.Y^1_t\right|\F^B_t\right]\E^Q\left[\left.Y^2_t\right|\F^B_t\right]\right] \\
	&= \E^Q\left[Z_{t+}\right]\E^Q\left[Z_t\E^Q\left[\left.Y^1_t\right|\F^B_t\right]\E^Q\left[\left.Y^2_t\right|\F^B_t\right]\right] \\
	&= \E^Q\left[Z_{t+}\right]\E^Q\left[Z_tY^1_tY^2_t\right].
\end{align*}
This implies that $\sigma(B_s - B_t : s \in [t,T])$ is independent of $\G_t$.
\end{proof}

\subsection{Lasry-Lions monotonicity condition}
An application of Proposition \ref{pr:yamadawatanabe} is possible under
\begin{assumption}{\textbf{U}} \label{assumption:U} {\ }
\begin{enumerate}
\item[(U.1)] $b$, $\sigma$, and $\sigma_0$ have no mean field term.
\item[(U.2)] $f$ is of the form $f(t,x,\mu,a) = f_1(t,x,a) + f_2(t,x,\mu)$.
\item[(U.3)] For all $\mu,\nu \in \P^p(\C^d)$ we have the Lasry-Lions monotonicity condition:
\begin{align}
\int_{\C^d}(\mu-\nu)(dx)\left[g(x_T,\mu_T) - g(x_T,\nu_T) + \int_0^T\left(f_2(t,x_t,\mu_t)-f_2(t,x_t,\nu_t)\right)dt\right] \le 0. \label{monotonicity}
\end{align}
\item[(U.4)] For any $\rho \in \P^p_{c}[(\Omega_{0},\W_{\lambda})\leadsto 
{\mathcal P}^p(\X)]$ the set $\A^\star(\rho)$ is a singleton, which means that 
the maximization problem \textit{in the environment $\rho$} has a unique (relaxed) solution. See
\eqref{eq:astar} for the definition of $\A^\star(\rho)$. 
\end{enumerate}
\end{assumption}
Note that assumptions (D.1-3) imply (U.4), by Proposition \ref{pr:strongcontrol}. We then claim:
\begin{theorem} \label{th:uniqueness}
Suppose assumptions \ref{assumption:A} and \ref{assumption:U} hold. Then there exists a unique in law weak MFG solution with weak control, and it is in fact a strong MFG solution with weak control. In particular, under \ref{assumption:A}, (D.1-3), and (U.1-3), the unique in law weak MFG solution with weak control is in fact a strong MFG solution with strong control.
\end{theorem}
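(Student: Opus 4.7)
The proof proceeds in three stages: (i) establish pathwise uniqueness under Assumption~\ref{assumption:U}, (ii) invoke Proposition~\ref{pr:yamadawatanabe} to upgrade pathwise uniqueness to strong uniqueness, and (iii) under Assumption~\ref{assumption:D}, invoke Proposition~\ref{pr:strongcontrol} to upgrade the weak control to a strong one. Stages (ii) and (iii) are direct applications of already-proven tools; the substantive work is in (i).

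For (i), let $\gamma^1, \gamma^2$ be two MFG solution bases and fix a coupling $(\Theta, (\G_t)_{t \in [0,T]}, Q, B, \mu^1, \mu^2)$. I first enlarge $\Theta$ to a probability space carrying $X_0 \sim \lambda$, a Wiener process $W$ (both independent of $(B,\mu^1,\mu^2)$), and admissible relaxed controls $\Lambda^1, \Lambda^2$ such that, for each $i$, the tuple $(X_0, B, W, \mu^i, \Lambda^i)$ realizes the admissible-control marginal of $\gamma^i$, while $\Lambda^1$ and $\Lambda^2$ are conditionally independent given $(X_0, B, W, \mu^1, \mu^2)$. By (U.1) the state equation depends on the measure only through the controls, so each state process $X^i$ is a deterministic measurable functional of $(X_0, B, W, \Lambda^i)$ alone; in particular the same $X^i$ solves the state equation whether the environment is declared to be $\mu^1$ or $\mu^2$.

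The heart of the argument is a Lasry--Lions cross-swap. Applying the optimality clause Definition~\ref{def:mfg:weak:weak}(5) to $\gamma^1$ with the cross-alternative $(X_0, B, W, \mu^1, \Lambda^2, X^2)$, and symmetrically to $\gamma^2$, the two inequalities add, with the $f_1$-contributions cancelling by (U.2), to
\[
0 \le \E\left[\int_{\C^d}(\mu^{1,x}-\mu^{2,x})(dx)\left(g(x_T,\mu^{1,x}_T)-g(x_T,\mu^{2,x}_T) + \int_0^T\bigl[f_2(t,x_t,\mu^{1,x}_t)-f_2(t,x_t,\mu^{2,x}_t)\bigr]dt\right)\right],
\]
where we have re-expressed the expectations via the fixed-point identity $\mu^{i,x} = P(X^i \in \cdot \mid B, \mu^i)$. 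The inner integrand is $\le 0$ almost surely by (U.3), forcing equality throughout. Consequently each cross-swap itself attains the optimal value, so by (U.4) the joint law of $(X_0, B, W, \mu^1, \Lambda^2)$ coincides with that of $(X_0, B, W, \mu^1, \Lambda^1)$, and likewise with the roles of $1,2$ interchanged. Transferring these identities through the fixed-point condition and exploiting the conditional independence of $\mu^1, \mu^2$ given $B$ yields $\mu^{1,x} = \mu^{2,x}$ a.s.; an analogous argument with $(W,\Lambda,X)$ in place of $X$ upgrades this to $\mu^1 = \mu^2$ a.s. Finally, Proposition~\ref{pr:yamadawatanabe} converts pathwise uniqueness together with Theorem~\ref{th:existence} into the first assertion, and under (D.1)--(D.3) Proposition~\ref{pr:strongcontrol} supplies both the singleton hypothesis (U.4) and the desired strong-control conclusion.

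The principal technical obstacle is verifying compatibility (clause (3) of Definition~\ref{def:mfg:weak:weak}) for the cross-alternative: we must show $\sigma(\Lambda^2_s : s \le t)$ is conditionally independent of $\F^{X_0,B,W,\mu^1}_T$ given $\F^{X_0,B,W,\mu^1}_t$, even though $\Lambda^2$ was originally constructed compatibly with the $\mu^2$-filtration. This is precisely where the conditional independence of $\mu^1, \mu^2$ given $B$ from the coupling definition combines with the independence of $(X_0, W)$ from $(B, \mu^1, \mu^2)$ and the conditional independence of $\Lambda^1, \Lambda^2$ given $(X_0, B, W, \mu^1, \mu^2)$ from the enlargement step to legitimize the transfer.
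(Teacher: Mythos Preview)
Your proposal is correct and follows essentially the same route as the paper: construct the coupled space with $(B,\mu^1,\mu^2,\xi,W,\Lambda^1,X^1,\Lambda^2,X^2)$, verify that the cross-controls $P^{i,j}:=\overline{P}\circ(\xi,B,W,\mu^i,\Lambda^j,X^j)^{-1}$ lie in $\RC\A(\rho^i)$ (the compatibility check you flag as the main obstacle), sum the two optimality inequalities so that the $f_1$-terms cancel, and confront the result with the Lasry--Lions monotonicity (U.3).

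The only organisational difference is that the paper argues by contradiction---assuming $\mu^1\neq\mu^2$, it first shows (via the conditional-independence/mean-measure computation) that at least one of $P^{1,1}=P^{1,2}$ or $P^{2,2}=P^{2,1}$ must fail, so by (U.4) one optimality inequality is strict, and the strict sum then violates (U.3)---whereas you run the same ingredients forward: (U.3) forces the sum to vanish, hence each cross-swap is optimal, hence by (U.4) $P^{1,1}=P^{1,2}$ and $P^{2,2}=P^{2,1}$, and the same mean-measure computation yields $\mu^1=\mu^2$. These are logically equivalent rearrangements. One small simplification available to you: once $P^{1,1}=P^{1,2}$ as laws on $\Omega$, the fixed-point condition $\mu^i=\overline{P}((W,\Lambda^i,X^i)\in\cdot\mid B,\mu^i)$ together with conditional independence already gives $\E[\mu^2\mid B]=\mu^1$ and $\E[\mu^1\mid B]=\mu^2$ for the \emph{full} measures on $\X$, so there is no need to pass through $\mu^{1,x}=\mu^{2,x}$ first and then ``upgrade.''
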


\begin{proof}
\textit{First step.}
Let $\gamma^1$ and $\gamma^2$ be two MFG solution bases, and define
\begin{align*}
\rho^i &:= (M\gamma^i) \circ (\xi,B,W,\mu)^{-1}.
\end{align*}
Let $(\Theta,(\G_t)_{t \in [0,T]},Q,B,\mu^1,\mu^2)$ be any coupling of $\gamma^1$ and $\gamma^2$. In view of Proposition \ref{pr:yamadawatanabe}, we will prove $\mu^1 = \mu^2$ a.s. In fact, we may assume without loss of generality that
\begin{align*}
\Theta = \C^{m_0} \times \P^p(\X) \times \P^p(\X), \quad
\G_t = \F^B_t \otimes \F^\mu_t \otimes \F^\mu_t, 
\end{align*}
and $Q$ is the joint distribution of the canonical processes $B$, $\mu^1$, and 
$\mu^2$ on $\Theta$. 
For each $i=1,2$, there is a  kernel
\[
\Omega_0 \times \P^p(\X) \ni \omega \mapsto K^i_\omega \in \P(\V \times \C^d).
\]
such that
\[
M\gamma^i = \rho^i(d\omega)K^i_\omega(dq,dx).
\]
The key point is that $K^i$ is necessarily adapted to 
the completed filtration $(\widetilde{\F}_{t}^{\xi,B,W,\mu})_{t \in [0,T]}$, which means that, 
for each $t \in [0,T]$ and each $\F^{\Lambda,X}_t$-measurable $\phi : \V \times \C^d \rightarrow \R$, the map  
$\omega \mapsto \int\phi\,dK^i_\omega$ is $\widetilde{\F}_t^{\xi,B,W,\mu}$-measurable. 
The proof is as follows. Since $M \gamma^i$ is a weak MFG solution, 
the $\sigma$-fields $\F_{T}^{\xi,B,W,\mu}$ and $\F_{t}^{\Lambda}$
are conditionally independent under $M \gamma^i$ given 
$\F_{t}^{\xi,B,W,\mu}$. 
Since the solution of the state equation 
\eqref{def:relaxedSDE-weak} is strong, 
$\F_{t}^{\xi,B,W,\mu,\Lambda,X}$ is included in the $M\gamma^i$-completion of
$\F_{t}^{\xi,B,W,\mu,\Lambda}$, from which we deduce that 
$\F_{T}^{\xi,B,W,\mu}$ and $\F_{t}^{\Lambda,X}$
are conditionally independent under $M \gamma^i$ given 
$\F_{t}^{\xi,B,W,\mu}$. Therefore, for each $t \in [0,T]$ and each $\F^{\Lambda,X}_t$-measurable $\phi : \V \times \C^d \rightarrow \R$, we have
\[
\int\phi\,dK^i = \E^{M\gamma^i}\left[\left.\phi(\Lambda,X) \right| \F^{\xi,B,W,\mu}_T\right] = \E^{M\gamma^i}\left[\left.\phi(\Lambda,X) \right| \F^{\xi,B,W,\mu}_t\right], \ a.s.
\]

\textit{Second step.} Define now the extended probability space:
\begin{equation*}
\overline{\Omega} := \Theta \times (\R^d \times \C^m) \times (\V \times \C^d)^2, \quad
\overline{\F}_t := \G_t \otimes \F^{\xi,W}_t \otimes \F^{\Lambda,X}_t \otimes \F^{\Lambda,X}_t, 
\end{equation*}
endowed with the probability measure:
\begin{equation*}
\overline{P} := Q(d\beta,d\nu^1,d\nu^2)\lambda(d\xi)\W^m(dw)\prod_{i=1}^2K^i_{\xi,\beta,w,\nu^i}(dq^i,dx^i).
\end{equation*}
Let $(B,\mu^1,\mu^2,\xi,W,\Lambda^1,X^1,\Lambda^2,X^2)$ denote the coordinate maps on $\overline{\Omega}$. Let $\mu^{i,x} = (\mu^i)^x$. In words, we have constructed $\overline{P}$ so that the following hold:
\begin{enumerate}
\item $(B,\mu^1,\mu^2)$, $W$, and $\xi$ are independent.
\item $(\Lambda^1,X^1)$ and $(\Lambda^2,X^2)$ are conditionally independent given $(B,\mu^1,\mu^2,\xi,W)$.
\item The state equation holds, for each $i=1,2$:
\[
X^i_t = \xi + \int_0^tds\int_A\Lambda^i_s(da)b(s,X^i_s,a)ds + \int_0^t\sigma(s,X^i_s)dW_s + \int_0^t\sigma_0(s,X^i_s)dB_s.
\]
\end{enumerate}
For $i,j = 1,2$, define
\begin{align*}
P^{i,j} &:= \overline{P} \circ (\xi,B,W,\mu^i,\Lambda^j,X^j)^{-1}.
\end{align*}
By assumption {\bf U}(4), $P^{i,i}$ is the \emph{unique} element of $\RC\A^\star(\rho^i)$, for each $i=1,2$. On the other hand, we will verify that
\begin{align}
\label{eq:P12:P21}
P^{1,2} \in \RC\A(\rho^1) \quad\quad \text{ and } \quad\quad P^{2,1} \in \RC\A(\rho^2).
\end{align}
Indeed, defining
\[
Q^{1,2} := P^{1,2} \circ (\xi,B,W,\mu,\Lambda)^{-1} = \overline{P} \circ (\xi,B,W,\mu^1,\Lambda^2)^{-1},
\]
it is clear that $P^{1,2} = \RC(Q^{1,2})$ because of the lack of mean field terms in the state equation (by assumption (U.1)). It remains only to check that $Q^{1,2}$ is compatible with $\rho^1$ in the sense of (2) in Subsection \ref{subse:setup}, or equivalently 
that, 
under $\overline{P}$, ${\mathcal F}_{T}^{\xi,B,W,\mu^1}$
and ${\mathcal F}_{t}^{\Lambda^2}$ are conditionally independent given 
${\mathcal F}_{t}^{\xi,B,W,\mu^1}$. Given three bounded real-valued functions 
$\phi_{t}^1$, $\phi_{T}^1$ and $\psi_{t}^2$, 
where $\phi_{t}^1$ and $\phi_{T}^1$ are both defined on 
$\Omega_{0} \times \P^p(\X)$ and are ${\mathcal F}_{t}^{\xi,B,W,\mu}$-measurable and ${\mathcal F}_{T}^{\xi,B,W,\mu}$-measurable (respectively), 
and where $\psi_{t}^2$ is defined on $\V$ and is ${\mathcal F}_{t}^{\Lambda}$-measurable,
we have
\begin{equation*}
\begin{split}
\E^{\overline{P}} \bigl[ \bigl( \phi_{t}^1 \phi_{T}^1 \bigr)(\xi,B,W,\mu^1) \psi_{t}^2(\Lambda^2) \bigr]
&= \E^{\overline{P}} \biggl[ \bigl( \phi_{t}^1 \phi_{T}^1 \bigr) (\xi,B,W,\mu^1) 
\int_{\V}
\psi_{t}^2(q) K_{\xi,B,W,\mu^2}^2(dq) \biggr]
\\
&= \E^{\overline{P}} \biggl[ \bigl( \phi_{t}^1 \phi_{T}^1 \bigr) (\xi,B,W,\mu^1)
\E^{\overline{P}} \biggl[ 
\int_{\V}
\psi_{t}^2(q) K_{\xi,B,W,\mu^2}^2(dq) \big\vert {\mathcal F}_T^{\xi,B,W}
\biggr]
\biggr],
\end{split}
\end{equation*}
where the last equality follows from the fact that 
$\mu^1$ and $\mu^2$ are conditionally independent given $(\xi,B,W)$. 
Since $(B,W)$ is an $({\mathcal F}_{t}^{\xi,B,W,\mu^2})_{t \in [0,T]}$-Wiener process and 
$\int_{\V}\psi_{t}^2(q) K_{\xi,B,W,\mu^2}^2(dq)$ is $\tilde{\mathcal F}_{t}^{\xi,B,W,\mu^2}$-measurable by the argument above, the conditioning in the third line can be replaced by a conditioning by 
${\mathcal F}_{t}^{\xi,B,W}$. Then, using once again the fact that 
$\mu^1$ and $\mu^2$ are conditionally independent given $(\xi,B,W)$, the conditioning
by ${\mathcal F}_{t}^{\xi,B,W}$ can be replaced by a conditioning by 
${\mathcal F}_{t}^{\xi,B,W,\mu^1}$, which proves the required property of conditional independence. This shows that $Q^{1,2} \in \A(\rho^1)$ and thus 
$P^{1,2} \in \RC\A(\rho^1)$. The proof that $P^{2,1} \in \RC\A(\rho^2)$ is identical.

\textit{Third step.}
Note that $(X^i,\Lambda^i,W)$ and $\mu^j$ are conditionally independent given $(B,\mu^i)$, for $i \neq j$, and thus
\begin{align}
\overline{P}((W,\Lambda^i,X^i) \in \cdot \ | \ B,\mu^1,\mu^2) = \overline{P}((W,\Lambda^i,X^i) \in \cdot \ | \ B,\mu^i) = \mu^i, \ i=1,2. \label{pf:uniqueness1}
\end{align}
Now suppose it does not hold that $\mu^1 = \mu^2$ a.s. Suppose that both
\begin{align}
&P^{1,1} = P^{1,2}, \quad \text{i.e} \quad
\overline{P} \circ (\xi,B,W,\mu^1,\Lambda^1,X^1)^{-1} = \overline{P} \circ (\xi,B,W,\mu^1,\Lambda^2,X^2)^{-1},
\label{pf:uniqueness2} 
\\
&P^{2,2} = P^{2,1}, \quad \text{i.e.} \quad 
\overline{P} \circ (\xi,B,W,\mu^2,\Lambda^2,X^2)^{-1} = \overline{P} \circ (\xi,B,W,\mu^2,\Lambda^1,X^1)^{-1}. 
\label{pf:uniqueness3} 
\end{align}
It follows that
\begin{align*}
\overline{P}((W,\Lambda^2,X^2) \in \cdot \ |  B,\mu^1) &= \overline{P}((W,\Lambda^1,X^1) \in \cdot \ | B,\mu^1) = \mu^1, \\
\overline{P}((W,\Lambda^1,X^1) \in \cdot \ |  B,\mu^2) &= \overline{P}((W,\Lambda^2,X^2) \in \cdot \ |  B,\mu^2) = \mu^2.
\end{align*}
Combined with \eqref{pf:uniqueness1}, this implies
\begin{align*}
\E^{\overline{P}}[\mu^2  |  B,\mu^1] &= \E^{\overline{P}}[\overline{P}((W,\Lambda^2,X^2) \in \cdot \ |  B,\mu^1,\mu^2) \ |  B,\mu^1] = \mu^1, \\
\E^{\overline{P}}[\mu^1  |  B,\mu^2] &= \E^{\overline{P}}[\overline{P}((W,\Lambda^1,X^1) \in \cdot \ |  B,\mu^1,\mu^2) \ |  B,\mu^2] = \mu^2.
\end{align*}
These conditional expectations are understood in terms of mean measures. By conditional independence, $\E^{\overline{P}}[\mu^i  |  B,\mu^j] = \E^{\overline{P}}[\mu^i  |  B]$ for $i \neq j$, and thus
\begin{equation*}
\E^{\overline{P}}[\mu^2  |  B] = \mu^1, \quad \text{and} \quad
\E^{\overline{P}}[\mu^1  |  B] = \mu^2.
\end{equation*}
Thus $\mu^1$ and $\mu^2$ are in fact $B$-measurable and equal, which is a contradiction. Hence, one of the distributional equalities \eqref{pf:uniqueness2} or \eqref{pf:uniqueness3} must fail. By optimality of $P^{1,1}$ and $P^{2,2}$ and by \eqref{eq:P12:P21}, we have the following two inequalities, and assumption (U.4) implies that at least one of them is strict:
\begin{align*}
0 \le J(P^{2,2}) - J(P^{2,1}), \quad \text{and} \quad
0 \le J(P^{1,1}) - J(P^{1,2}).
\end{align*}
Writing out the definition of $J$ and using the special form of $f$ from assumption (U.2),
\begin{align*}
0 \le \E^{\overline{P}}&\int_0^Tdt\left[\int_A\Lambda^2_t(da)f_1(t,X^2_t,a) + f_2(t,X^2_t,\mu^{2,x}_t) - \int_A\Lambda^1_t(da)f_1(t,X^1_t,a) - f_2(t,X^1_t,\mu^{2,x}_t) \right] \\
	& + \E^{\overline{P}}\left[g(X^2_T,\mu^{2,x}_T) - g(X^1_T,\mu^{2,x}_T)\right], \\
0 \le \E^{\overline{P}}&\int_0^Tdt\left[\int_A\Lambda^1_t(da)f_1(t,X^1_t,a) + f_2(t,X^1_t,\mu^{1,x}_t) - \int_A\Lambda^2_t(da)f_1(t,X^2_t,a) - f_2(t,X^2_t,\mu^{1,x}_t) \right] \\
	& + \E^{\overline{P}}\left[g(X^1_T,\mu^{1,x}_T) - g(X^2_T,\mu^{1,x}_T)\right],
\end{align*}
one of the two inequalities being strict.
Add these inequalities to get
\begin{align}
0 < \E^{\overline{P}}&\left[\int_0^T\left(f_2(t,X^2_t,\mu^{2,x}_t) - f_2(t,X^2_t,\mu^{1,x}_t) + f_2(t,X^1_t,\mu^{1,x}_t) - f_2(t,X^1_t,\mu^{2,x}_t)\right)dt\right] \nonumber \\
	&+\E^{\overline{P}}\left[g(X^2_T,\mu^{2,x}_T) - g(X^2_T,\mu^{1,x}_T) + g(X^1_T,\mu^{1,x}_T) - g(X^1_T,\mu^{2,x}_T) \right] \label{pf:uniqueness4}
\end{align}
Then, conditioning on $(B,\mu^1,\mu^2)$ inside of \eqref{pf:uniqueness4} and applying \eqref{pf:uniqueness1} yields
\begin{align*}
0 < \E^{\overline{P}}\int_{\C^d}(\mu^{2,x}-\mu^{1,x})(dx)\left[\int_0^T\left(f_2(t,x_t,\mu^{2,x}_t) - f_2(t,x_t,\mu^{1,x}_t)\right)dt + g(x_T,\mu^{2,x}_T) - g(x_T,\mu^{1,x}_T) \right].
\end{align*}
This contradicts assumption (U.3), and so $\mu^1 = \mu^2$ a.s.
\end{proof}

\appendix

\section{Topology of Wasserstein spaces} \label{ap:wasserstein}
Recall the definition of the Wasserstein metric from \eqref{def:wasserstein}. For ease of reference, this appendix compiles several known results on Wasserstein spaces.

\begin{proposition}[Theorem 7.12 of \cite{villanibook}] \label{pr:wasserstein}
Let $(E,\ell)$ be a metric space, and suppose $\mu,\mu_n \in \P^p(E)$. Then the following are equivalent
\begin{enumerate}
\item $\ell_{E,p}(\mu_n,\mu) \rightarrow 0$.
\item $\mu_n \rightarrow \mu$ weakly and for some $x_0 \in E$ we have
\begin{align*}
\lim_{r \rightarrow \infty}\sup_n\int_{\{x : \ell^p(x,x_0) \ge r\}}\mu_n(dx)\ell^p(x,x_0) = 0.
\end{align*}
\item $\int\phi\,d\mu_n \rightarrow \int\phi\,d\mu$ for all continuous functions $\phi : E \rightarrow \R$ such that there exists $x_0 \in E$ and $c > 0$ for which $|\phi(x)| \le c(1 + \ell^p(x,x_0))$ for all $x \in E$.
\end{enumerate}
In particular, (2) implies that a sequence $\{\mu_n\} \subset \P^p(E)$ is relatively compact if and only it is tight (i.e. relatively compact in $\P(E)$) and satisfies the uniform integrability condition (2).
\end{proposition}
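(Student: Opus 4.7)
The plan is to establish the cycle (1) $\Rightarrow$ (3) $\Rightarrow$ (2) $\Rightarrow$ (1); the first two implications are essentially measure-theoretic bookkeeping, while the last is the substantive content. Throughout I would fix a reference point $x_0 \in E$ and write $\psi_p(x) := 1 + \ell^p(x,x_0)$, which dominates both the allowable test functions in (3) and the integrands in (2).

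For (1) $\Rightarrow$ (3), I would pick an optimal coupling $\gamma_n \in \P(E \times E)$ with marginals $\mu_n,\mu$ realizing the Wasserstein distance, so that $\int \ell^p(x,y)\,\gamma_n(dx,dy) \to 0$. Given $\phi$ continuous with $|\phi(x)| \le c\psi_p(x)$, I would split
\[
\int \phi\,d\mu_n - \int \phi\,d\mu = \int (\phi(x) - \phi(y))\,\gamma_n(dx,dy),
\]
truncate $\phi$ to a bounded continuous function $\phi_R$ agreeing with $\phi$ on the ball of radius $R$ about $x_0$, bound $|\phi - \phi_R|$ uniformly by $2c\psi_p \mathbf{1}_{\{\ell(\cdot,x_0) > R\}}$, and use that $\int \psi_p\,d\mu_n$ is bounded (from the triangle inequality $\ell^p(\cdot,x_0) \le 2^{p-1}(\ell^p(\cdot,y) + \ell^p(y,x_0))$ applied under $\gamma_n$). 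Letting $R \to \infty$ after $n \to \infty$ finishes this direction.

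For (3) $\Rightarrow$ (2), weak convergence is immediate by testing against $C_b(E) \subset $ the class in (3). To extract the uniform integrability statement in (2), I would test against the continuous truncations $\chi_R(x) := (\ell^p(x,x_0) - R)^+ \wedge 1$ (or similar), which satisfy $|\chi_R| \le \psi_p$, to conclude that $\int_{\{\ell(x,x_0) > R+1\}} \ell^p(x,x_0)\,\mu_n(dx)$ converges as $n\to\infty$ to the corresponding integral against $\mu$, and this latter integral tends to $0$ as $R \to \infty$ because $\mu \in \P^p(E)$. A uniform-in-$n$ tail bound then follows by first fixing $R$ large for the limit and then handling the finitely many initial indices.

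The main obstacle is (2) $\Rightarrow$ (1). The cleanest route I would take is via a Skorohod-type representation: on some auxiliary probability space, realize random variables $\xi_n \sim \mu_n$ and $\xi \sim \mu$ with $\xi_n \to \xi$ almost surely (this uses only weak convergence and separability). Then $\ell^p(\xi_n,\xi) \to 0$ almost surely; I would upgrade this to $L^1$ convergence by a Vitali-style argument, whose hypothesis is precisely the uniform $p$-integrability of $\ell^p(\xi_n, x_0)$ from (2), combined with the bound $\ell^p(\xi_n,\xi) \le 2^{p-1}(\ell^p(\xi_n,x_0) + \ell^p(\xi,x_0))$. The joint law of $(\xi_n,\xi)$ then provides a coupling witnessing $\ell_{E,p}^p(\mu_n,\mu) \le \E \ell^p(\xi_n,\xi) \to 0$. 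The delicate point (and the reason this is the hard direction) is verifying the uniform integrability of $\{\ell^p(\xi_n,x_0)\}_n$ under the single constructed measure, which reduces via the distributional identities to the tail condition in (2). The final assertion about relative compactness then follows from Prohorov's theorem together with the equivalence already established.
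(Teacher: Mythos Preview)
The paper does not prove this proposition; it is quoted from Villani's book as a reference result, so there is no proof in the paper to compare against. Your sketch follows the standard route and is essentially correct.

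Two small imprecisions are worth flagging. In (3)$\Rightarrow$(2), the function $\chi_R(x)=(\ell^p(x,x_0)-R)^+\wedge 1$ is bounded by $1$, so testing against it cannot control $\int_{\{\ell^p\ge r\}}\ell^p\,d\mu_n$. What you want is a test function of the form $\phi_R(x)=\ell^p(x,x_0)\,\eta(\ell^p(x,x_0)/R)$ with $\eta$ a continuous cutoff that vanishes on $[0,1]$ and equals $1$ on $[2,\infty)$; this satisfies $|\phi_R|\le\psi_p$ and dominates $\ell^p(\cdot,x_0)\mathbf 1_{\{\ell^p(\cdot,x_0)\ge 2R\}}$, so (3) gives the tail estimate directly. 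In (1)$\Rightarrow$(3), after truncating to the bounded continuous $\phi_R$ you need $\int\phi_R\,d\mu_n\to\int\phi_R\,d\mu$, i.e.\ weak convergence; this does follow from (1) by testing against bounded Lipschitz functions via the coupling $\gamma_n$, but it should be said explicitly rather than left implicit in ``letting $R\to\infty$ after $n\to\infty$.''
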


The rest of the results listed here are borrowed from Appendices A and B of \cite{lacker-mfgcontrolledmartingaleproblems}, but the proofs are straightforward and essentially just extend known results on weak convergence using a homeomorphism between $\P(E)$ and $\P^p(E)$. Indeed, if  $x_0 \in E$ is fixed and $\psi(x) := 1 + \ell^p(x,x_0)$, then the map $\mu \mapsto \psi\,\mu/\int\psi\,d\mu$ is easily seen to define a homeomorphism from $(\P^p(E),\ell_{E,p})$ to $\P(E)$ with the weak topology, where for each $\mu \in \P^p(E)$ the measure $\psi\,\mu \in \P(E)$ is defined by by $\psi\,\mu(C) = \int_BC\psi\,d\mu$ for $C \in \B(E)$. For $P \in \P(\P(E))$, define the mean measure $mP \in \P(E)$ by
\[
mP(C) := \int_{\P(E)}\mu(C)P(d\mu), \ C \in \B(E).
\]

\begin{proposition} \label{pr:ptight}
Let $(E,\ell)$ be a complete separable metric space. Suppose $K \subset \P^p(\P^p(E))$ is such that $\{mP : P \in K\} \subset \P(E)$ is tight and
\[
\sup_{P \in K}\int_E mP(dx)\ell^{p'}(x,x_0) < \infty, \text{ for some } p' > p, \ x_0 \in E.
\]
Then $K$ is relatively compact.
\end{proposition}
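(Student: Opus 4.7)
The plan is to apply Proposition \ref{pr:wasserstein} with $F := (\P^p(E), \ell_{E,p})$ to the set $K \subset \P^p(F)$: it suffices to verify (i) tightness of $K$ in $\P(F)$, and (ii) uniform integrability of the function $\mu \mapsto \ell_{E,p}^p(\mu, \delta_{x_0})$ against the measures in $K$. Here the Dirac mass $\delta_{x_0}$ serves as base point in $F$, which makes the $p$-th power of the Wasserstein distance particularly transparent: $\ell_{E,p}^p(\mu,\delta_{x_0}) = \int_E \ell^p(x,x_0)\mu(dx)$.

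I would start by upgrading the hypothesis to a uniform $p'$-moment bound at the outer level. By Jensen applied to the convex function $t \mapsto t^{p'/p}$ and the probability $\mu$,
\[
\ell_{E,p}^{p'}(\mu,\delta_{x_0}) = \Bigl(\int_E \ell^p(x,x_0)\mu(dx)\Bigr)^{p'/p} \le \int_E \ell^{p'}(x,x_0)\mu(dx).
\]
Integrating against $P$ and using Fubini yields $\int_F \ell_{E,p}^{p'}(\mu,\delta_{x_0})\,P(d\mu) \le \int_E \ell^{p'}(x,x_0)\,mP(dx) \le M$, where $M$ denotes the hypothesized supremum. Condition (ii) then follows from the elementary bound $\ell_{E,p}^p \le r^{-(p'-p)/p}\ell_{E,p}^{p'}$ on $\{\ell_{E,p}^p \ge r\}$, which gives $\int_{\{\ell_{E,p}^p\ge r\}}\ell_{E,p}^p\,dP \le M r^{-(p'-p)/p} \to 0$ uniformly in $P \in K$ as $r \to \infty$.

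The real work lies in establishing (i). For each $\epsilon > 0$, the goal is to construct a relatively compact $\mathcal{K}_\epsilon \subset \P^p(E)$ with $P(\mathcal{K}_\epsilon) \ge 1 - 2\epsilon$ for every $P \in K$. By Proposition \ref{pr:wasserstein} applied at the inner level, such a $\mathcal{K}_\epsilon$ is relatively compact in $\P^p(E)$ as soon as it is tight in $\P(E)$ and has $\ell^p(\cdot,x_0)$ uniformly integrable with respect to its elements. Tightness of $\{mP\}$ supplies compacts $K_n \subset E$ with $mP(K_n^c) \le \epsilon 4^{-n}$ for all $P \in K$; the same gap argument as above applied at the mean-measure level supplies radii $r_n$ with $\int_{\{\ell^p(\cdot,x_0) \ge r_n\}}\ell^p\,dmP \le \epsilon 4^{-n}$, again uniformly in $P \in K$. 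Fubini identifies $\int_F \mu(K_n^c)\,P(d\mu) = mP(K_n^c)$, and similarly for the $r_n$-truncated integral, so that Markov's inequality gives
\[
P\bigl(\{\mu : \mu(K_n^c) > 2^{-n}\}\bigr) \le \epsilon 2^{-n}, \qquad P\Bigl(\Bigl\{\mu : \int_{\{\ell^p \ge r_n\}}\ell^p\,d\mu > 2^{-n}\Bigr\}\Bigr) \le \epsilon 2^{-n}.
\]
Defining $\mathcal{K}_\epsilon$ as the intersection over $n$ of the complementary good sets produces a set with $P(\mathcal{K}_\epsilon^c) \le 2\epsilon$ whose defining conditions are precisely the two criteria for relative compactness in $\P^p(E)$. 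Its closure in $\P^p(E)$ is the desired compact set, yielding tightness of $K$ in $\P(F)$.

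The main obstacle is (i), where both the tightness and the inner-level $p$-integrability must be transferred from the mean measure $mP$ to $P$ itself. The $p' > p$ gap in the hypothesis is exactly what makes this possible: it provides the slack for the Markov argument at the mean-measure level to localize the bulk of the mass of $P$ inside a compact subset of $\P^p(E)$, and not merely of $\P(E)$. The remaining steps are routine.
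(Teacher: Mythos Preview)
Your proof is correct. The paper does not actually supply its own proof of this proposition; it borrows the result from \cite{lacker-mfgcontrolledmartingaleproblems} and only indicates a general strategy: use the homeomorphism $\mu \mapsto \psi\mu/\int\psi\,d\mu$ with $\psi(x)=1+\ell^p(x,x_0)$ to identify $(\P^p(E),\ell_{E,p})$ with $\P(E)$ under weak convergence, thereby reducing to the classical fact that tightness of the mean measures characterizes tightness in $\P(\P(E))$.

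Your route is genuinely different and more self-contained. Rather than invoking the homeomorphism and the classical $\P(\P(E))$ result, you work directly in $\P^p(\P^p(E))$ and verify the two conditions of Proposition~\ref{pr:wasserstein} by hand: Jensen plus the $p'>p$ gap gives the outer uniform integrability, and a double Markov-inequality argument (once for tightness, once for the inner uniform integrability of $\ell^p$) manufactures the compact sets in $\P^p(E)$. The paper's approach is more conceptual once one already knows the mean-measure criterion for $\P(\P(E))$; yours is more elementary and makes explicit exactly where the moment gap $p'>p$ is spent, at the cost of a slightly longer construction.
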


In the next two lemmas, let $(E,\ell_E)$ and $(F,\ell_F)$ be two complete separable metric spaces. We equip $E \times F$ with the metric formed by adding the metrics of $E$ and $F$, given by $((x_1,x_2),(y_1,y_2)) \mapsto \ell_E(x_1,y_1) + \ell_F(x_2,y_2)$, although this choice is inconsequential.

\begin{lemma} \label{le:productrelcompactness}
A set $K \subset \P^p(E \times F)$ is relatively compact if and only if $\{P(\cdot \times F) : P \in K\} \subset \P^p(E)$ and $\{P(E \times \cdot) : P \in K\} \subset \P^p(F)$ are relatively compact.
\end{lemma}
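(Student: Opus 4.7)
The plan is to use the characterization of relative compactness in $\P^p$ provided by Proposition \ref{pr:wasserstein}, namely tightness together with uniform $p$-th moment integrability. The forward implication is immediate: the marginal projections $\P^p(E \times F) \to \P^p(E)$ and $\P^p(E \times F) \to \P^p(F)$ are continuous (their actions can be checked against test functions as in Proposition \ref{pr:wasserstein}(3)), so they send relatively compact sets to relatively compact sets.

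For the nontrivial backward implication, assume $K^1 := \{P(\cdot \times F) : P \in K\}$ and $K^2 := \{P(E \times \cdot) : P \in K\}$ are relatively compact. First I would establish tightness of $K$ in $\P(E \times F)$: given $\epsilon > 0$, Prohorov's theorem yields compacts $L_E \subset E$ and $L_F \subset F$ with $\sup_{P \in K} P^1(L_E^c) < \epsilon/2$ and $\sup_{P \in K} P^2(L_F^c) < \epsilon/2$, and then $L_E \times L_F$ is compact in $E \times F$ with $P((L_E \times L_F)^c) \le P^1(L_E^c) + P^2(L_F^c) < \epsilon$ uniformly in $P \in K$.

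Next I would verify the uniform integrability condition of Proposition \ref{pr:wasserstein}(2) at a reference point $(x_0,y_0) \in E \times F$. With the product metric $\ell((x,y),(x_0,y_0)) = \ell_E(x,x_0) + \ell_F(y,y_0)$, the elementary inequality $(a+b)^p \le 2^{p-1}(a^p + b^p)$ gives
\begin{equation*}
\ell^p((x,y),(x_0,y_0)) \le 2^{p-1}\bigl(\ell_E^p(x,x_0) + \ell_F^p(y,y_0)\bigr).
\end{equation*}
Since $\int \ell_E^p(x,x_0)\,P(dx,dy) = \int \ell_E^p(x,x_0)\,P^1(dx)$ and similarly in the $F$ variable, the relative compactness of $K^i$ in $\P^p$ translates (again via Proposition \ref{pr:wasserstein}(2)) into uniform integrability of $\ell_E^p(\cdot,x_0)$ and $\ell_F^p(\cdot,y_0)$ under $\{P : P \in K\}$. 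Hence the bound above exhibits $\ell^p(\cdot,(x_0,y_0))$ as dominated by a uniformly integrable family, so it is itself uniformly integrable. Combined with the tightness established above, Proposition \ref{pr:wasserstein} gives relative compactness of $K$ in $\P^p(E \times F)$.

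No step here is a serious obstacle; the only mild subtlety is making sure that uniform integrability transfers from the marginals back to the joint via the dominated bound, which follows from the standard fact that any measurable function dominated pointwise by a uniformly integrable family is itself uniformly integrable.
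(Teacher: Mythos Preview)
Your proof is correct. The paper does not actually prove this lemma: it states the result as borrowed from another reference and remarks only that such statements ``essentially just extend known results on weak convergence using a homeomorphism between $\P(E)$ and $\P^p(E)$,'' namely $\mu \mapsto \psi\mu/\int\psi\,d\mu$ with $\psi = 1 + \ell^p(\cdot,x_0)$.

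Your route is different and more direct: rather than passing through that homeomorphism to reduce to the classical product-tightness statement in $\P(E \times F)$, you verify the two conditions of Proposition~\ref{pr:wasserstein}(2) by hand---tightness via the product of marginal compacts, and uniform $p$-integrability via the pointwise domination $\ell^p \le 2^{p-1}(\ell_E^p + \ell_F^p)$, each summand depending only on one marginal and hence inheriting uniform integrability from the assumed relative compactness of $K^1$ and $K^2$. The homeomorphism approach has the virtue of dispatching several appendix lemmas at once by a single reduction; your approach is self-contained and requires no extra machinery. Both are perfectly adequate here.
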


\begin{lemma} \label{le:componentwise}
Let $\phi : E \times F \rightarrow \R$ satisfy the following:
\begin{enumerate}
\item $\phi(\cdot,y)$ is measurable for each $y \in F$.
\item $\phi(x,\cdot)$ is continuous for each $x \in E$.
\item There exist $c > 0$, $x_0 \in E$, and $y_0 \in F$ such that
\[
|\phi(x,y)| \le c(1 + \ell^p_1(x,x_0) + \ell_2^p(y,y_0)), \ \forall(x,y) \in E \times F.
\]
\end{enumerate}
If $P^n \rightarrow P$ in $\P^p(E \times F)$ and $P^n(\cdot \times F) = P(\cdot \times F)$ for all $n$, then $\int\phi\,dP^n \rightarrow \int\phi\,dP$. 
\end{lemma}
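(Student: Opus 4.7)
The plan is a two-stage approximation: first I would truncate $\phi$ to make it bounded, then apply a Scorza--Dragoni argument to approximate the resulting bounded Carath\'eodory function by a jointly continuous one, controlling both errors through the fact that the $E$-marginals of all the $P^n$ and $P$ coincide with one fixed $\lambda \in \P^p(E)$.

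For the truncation step, I would fix $\epsilon>0$ and use Proposition \ref{pr:wasserstein}(2) to choose $R>0$ so large that
\[
\sup_n \int_{\{\ell_E^p(x,x_0)+\ell_F^p(y,y_0) > R\}} \bigl(1+\ell_E^p(x,x_0)+\ell_F^p(y,y_0)\bigr)\,dP^n < \epsilon,
\]
with the same bound for $P$. Taking a continuous cutoff $\chi_R:[0,\infty)\to[0,1]$ equal to $1$ on $[0,R]$ and vanishing past $2R$, I would set $\phi_R(x,y):=\phi(x,y)\,\chi_R(\ell_E^p(x,x_0)+\ell_F^p(y,y_0))$; then $\phi_R$ is still Carath\'eodory, bounded by some $M_R<\infty$, and the growth bound~(3) yields $|\int\phi\,dP^n-\int\phi_R\,dP^n|\le c\epsilon$ uniformly in $n$, with the same bound for $P$. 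Next, since $\lambda$ is automatically Radon on the Polish space $E$, the Scorza--Dragoni theorem will produce, for each $\eta>0$, a compact $K_\eta\subset E$ with $\lambda(E\setminus K_\eta)<\eta$ on which $\phi_R|_{K_\eta\times F}$ is jointly continuous; Tietze's extension theorem then gives a continuous $\widetilde\phi:E\times F\to\R$ agreeing with $\phi_R$ on the closed set $K_\eta\times F$ and satisfying $\|\widetilde\phi\|_\infty\le M_R$. Because $P^n$ and $P$ both project to $\lambda$ on $E$,
\[
\Bigl|\int\phi_R\,dP^n-\int\widetilde\phi\,dP^n\Bigr| \le 2M_R\,\lambda(E\setminus K_\eta) < 2M_R\eta,
\]
with the same bound for $P$, while $\int\widetilde\phi\,dP^n\to\int\widetilde\phi\,dP$ follows from the fact that $\widetilde\phi$ is bounded and continuous and $P^n\to P$ weakly (a consequence of $\P^p$-convergence via Proposition \ref{pr:wasserstein}(1)$\Rightarrow$(2)).

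Chaining the three estimates will give $\limsup_n|\int\phi\,dP^n-\int\phi\,dP|\le 2c\epsilon+4M_R\eta$, and the lemma follows on sending first $\eta\to 0$ with $R$ (hence $M_R$) fixed, then $\epsilon\to 0$. The only delicate point I foresee is the order of quantifiers: the truncation radius $R$, and thus the sup-bound $M_R$, must be frozen before choosing the Scorza--Dragoni compact $K_\eta$, so that the term $M_R\eta$ actually vanishes. A secondary sanity check is that Scorza--Dragoni does apply in the Polish (not necessarily locally compact) setting, which is fine because every Borel probability on a Polish space is Radon; if one preferred to avoid citing it, the same conclusion can be reached by approximating $y\mapsto\phi(x,y)$ via a continuous partition of unity in $F$ and then applying Lusin's theorem to the countably many measurable coefficients of $x$, but the Scorza--Dragoni route is cleaner.
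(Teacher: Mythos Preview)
Your argument is correct. The truncation via uniform $p$-integrability reduces to a bounded Carath\'eodory integrand, and the Scorza--Dragoni/Tietze step then handles the bounded case by exploiting the fixed $E$-marginal; your attention to the order of limits ($\eta\to 0$ before $\epsilon\to 0$) is exactly the point that needs care, and you have it right. The version of Scorza--Dragoni you invoke (Polish $F$, Radon $\lambda$ on Polish $E$) is valid.

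For comparison: the paper does not actually prove this lemma. It is stated in Appendix~\ref{ap:wasserstein} with the remark that the result is ``borrowed from Appendices A and B of \cite{lacker-mfgcontrolledmartingaleproblems}'' and that the proofs there ``essentially just extend known results on weak convergence using a homeomorphism between $\P(E)$ and $\P^p(E)$,'' namely $\mu\mapsto \psi\mu/\int\psi\,d\mu$ with $\psi=1+\ell^p(\cdot,x_0)$. That homeomorphism is an alternative device for absorbing the $p$-growth (it converts $\P^p$-convergence into ordinary weak convergence and turns $\phi$ into the bounded function $\phi/\psi$), but it does not by itself dispose of the discontinuity in the $E$-variable---one still needs the fixed-marginal hypothesis together with a Lusin/Scorza--Dragoni type argument, just as you do. Your direct truncation is arguably cleaner here because it preserves the fixed $E$-marginal throughout, whereas the homeomorphism reweights the measure and can disturb that marginal. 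In short, your route is a complete, self-contained proof; the paper's route is only sketched and would in any case converge on the same core step.
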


The last result we state specialize the above to the space $\V$, defined in Section \ref{subse:relaxed:controls}.

\begin{lemma} \label{le:usc}
Let $(E,\ell)$ be a complete separable metric space. Let $\phi : [0,T] \times E \times A \rightarrow \R$ be measurable with $\phi(t,\cdot)$ jointly continuous for each $t \in [0,T]$. Suppose there exist $c > 0$ and $x_0 \in E$ such that
\[
\phi(t,x,a) \le c(1 + \ell^p(x,x_0) + |a|^p).
\]
Then the following map is upper semicontinuous:
\[
C([0,T];E) \times \V \ni (x,q) \mapsto \int q(dt,da)\phi(t,x_t,a).
\]
If also $|\phi(t,x,a)| \le c(1 + \ell^p(x,x_0) + |a|^p)$, then this map is continuous.
\end{lemma}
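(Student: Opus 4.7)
The plan is to first establish the stronger continuity assertion under the two-sided bound, and then deduce upper semicontinuity from it by a truncation argument. Fix a sequence $(x^n,q^n)\to(x,q)$ in $C([0,T];E)\times\V$. Uniform convergence gives $M:=\sup_{n,t}\ell(x^n_t,x_0)<\infty$, and Proposition~\ref{pr:wasserstein}(2), applied to $q^n/T\to q/T$ in $\P^p([0,T]\times A)$, yields the uniform $p$-integrability
\[
\lim_{r\to\infty}\sup_n\int_{\{|a|>r\}}|a|^p\,q^n(dt,da)=0.
\]

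For the continuous case I would split
\[
\int \phi(t,x^n_t,a)\,q^n(dt,da)-\int \phi(t,x_t,a)\,q(dt,da)=A_n+B_n,
\]
with $A_n:=\int[\phi(t,x^n_t,a)-\phi(t,x_t,a)]\,q^n(dt,da)$ and $B_n:=\int \phi(t,x_t,a)(q^n-q)(dt,da)$. The term $A_n$ is the easier one: on $\{|a|\le r\}$, the Lebesgue marginal of $q^n$ on $[0,T]$ gives
\[
\int_{\{|a|\le r\}}|\phi(t,x^n_t,a)-\phi(t,x_t,a)|\,q^n(dt,da)\le\int_0^T\sup_{|a|\le r}|\phi(t,x^n_t,a)-\phi(t,x_t,a)|\,dt,
\]
which tends to zero by dominated convergence, using the joint continuity of $\phi(t,\cdot,\cdot)$ on the compact set $\overline{B(x_0,M)}\times\{|a|\le r\}$, the pointwise convergence $x^n_t\to x_t$, and the uniform bound $2c(1+M^p+r^p)$; the tail $\{|a|>r\}$ is controlled by $2c\int_{\{|a|>r\}}(1+M^p+|a|^p)\,q^n(dt,da)$, which is uniformly small in $n$ for $r$ large by uniform $p$-integrability and Chebyshev's inequality.

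The main obstacle lies in controlling $B_n$: the integrand $\psi(t,a):=\phi(t,x_t,a)$ is only Carathéodory (measurable in $t$, continuous in $a$), whereas Proposition~\ref{pr:wasserstein}(3) directly yields convergence only against jointly continuous test functions of $p$-growth. I would invoke the Scorza-Dragoni theorem: for each $\eta>0$ there is a closed set $F_\eta\subset[0,T]$ with $|F_\eta^c|<\eta$ on which $\psi$ is jointly continuous. Extending $\psi|_{F_\eta\times A}$ by Tietze and then truncating by the envelope $\pm c(1+M^p+|a|^p)$ produces a jointly continuous $\tilde\psi_\eta$ on $[0,T]\times A$ with the same $p$-growth bound in $a$. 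Proposition~\ref{pr:wasserstein}(3) then gives $\int \tilde\psi_\eta(q^n-q)\to 0$, and the remaining error is
\[
\Bigl|\int(\psi-\tilde\psi_\eta)(q^n-q)\Bigr|\le 2c\int_{F_\eta^c\times A}(1+M^p+|a|^p)\,(q^n+q)(dt,da),
\]
which, split along $\{|a|\le r\}\cup\{|a|>r\}$, is at most $2c(1+M^p+r^p)\eta$ plus a tail made small by uniform $p$-integrability; choosing $r$ first and then $\eta$ small yields $B_n\to 0$.

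For the upper semicontinuous case, decompose $\phi=h-\tilde\phi$ with $h(t,x,a):=c(1+\ell^p(x,x_0)+|a|^p)$ and $\tilde\phi:=h-\phi\ge 0$. The map
\[
(x,q)\mapsto\int h(t,x_t,a)\,q(dt,da)=cT+c\int_0^T\ell^p(x_t,x_0)\,dt+c\int|a|^p\,q(dt,da)
\]
is continuous on $C([0,T];E)\times\V$: the middle term by dominated convergence in $x$, the last by the very definition of the $p$-Wasserstein metric. For the non-negative Carathéodory function $\tilde\phi$, each truncation $\tilde\phi\wedge K$ satisfies the two-sided bound, so by the continuity case already proved, $(x,q)\mapsto\int(\tilde\phi\wedge K)(t,x_t,a)\,q(dt,da)$ is continuous; by monotone convergence, $(x,q)\mapsto\int\tilde\phi(t,x_t,a)\,q(dt,da)=\sup_K\int(\tilde\phi\wedge K)(t,x_t,a)\,q(dt,da)$ is the supremum of continuous functions and hence lower semicontinuous. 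Therefore $\int \phi\,q=\int h\,q-\int\tilde\phi\,q$ is upper semicontinuous, completing the proof.
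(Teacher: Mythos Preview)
The paper does not actually prove this lemma; it is listed in Appendix~A among results ``borrowed from Appendices A and B of \cite{lacker-mfgcontrolledmartingaleproblems}'' with the remark that the proofs are straightforward. So there is no in-paper proof to compare against, and your argument must be assessed on its own.

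Your proof is correct and well organized. The strategy of first proving continuity under the two-sided bound (via the $A_n+B_n$ splitting, Scorza--Dragoni for the Carath\'eodory integrand, Tietze/Dugundji extension, and uniform $p$-integrability of the tails) and then deducing upper semicontinuity by writing $\phi=h-\tilde\phi$ with $\tilde\phi\ge 0$ and expressing $\int\tilde\phi$ as an increasing limit of continuous functionals is clean and complete. One minor imprecision: you refer to ``the compact set $\overline{B(x_0,M)}\times\{|a|\le r\}$'', but closed balls in a general Polish space $E$ need not be compact. This does not affect your argument, however: what you actually use is that $A\cap\{|a|\le r\}$ is compact (it is, since $A$ is closed in Euclidean space by (A.1)) together with the pointwise convergence $x^n_t\to x_t$; a short subsequence argument in $a$ then gives $\sup_{|a|\le r}|\phi(t,x^n_t,a)-\phi(t,x_t,a)|\to 0$ for each fixed $t$, and the constant bound $2c(1+M^p+r^p)$ dominates. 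You might rephrase that sentence accordingly.
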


\section{A compactness result for It\^{o} processes} \label{ap:itocompact}
Recall from assumption \ref{assumption:A} that $A$ is a closed subset of a Euclidean space, $p' > p \ge 1 \vee p_\sigma$, $p_\sigma \in [0,2]$, and $\lambda \in \P^{p'}(\R^d)$. Recall that $\V$ was defined in Section \ref{subse:relaxed:controls}.

\begin{proposition} \label{pr:itocompact}
Let $d$ be a positive integer, and fix $c > 0$. Let $\Q \subset \P(\V \times \C^d)$ be the set of laws of $\V \times \C^d$-valued random variables $(\Lambda,X)$ defined on some complete filtered probability space $(\Theta,(\G_t)_{t \in [0,T]},P)$ satisfying:
\begin{enumerate}
\item $dX_t = \int_AB(t,X_t,a)\Lambda_t(da)dt + \Sigma(t,X_t)dW_t$.
\item $W$ is a $k$-dimensional $(\G_t)_{t \in [0,T]}$-Wiener process.
\item $\Sigma : [0,T] \times \Theta \times \R^d \rightarrow \R^{d \times k}$ and $B : [0,T] \times \Theta \times \R^d \times A \rightarrow \R^d$ are jointly measurable, using the $(\G_t)_{t \in [0,T]}$-progressive $\sigma$-field on $[0,T] \times \Theta$.
\item $X_0$ has law $\lambda$ and is $\G_0$-measurable.
\item There exists a nonnegative $\G_T$-measurable random variable $Z$ such that, for each $(t,x,a) \in [0,T] \times \R^d \times A$,
\begin{align*}
|B(t,x,a)|&\le c\left(1 + |x| + Z + |a|\right), \quad |\Sigma(t,x)|^2 \le c\left(1 + |x|^{p_\sigma} + Z^{p_\sigma}\right)
\end{align*}
and
\[
\E^P\left[|X_0|^{p'} + Z^{p'} + \int_0^T\int_A|a|^{p'}\Lambda_t(da)dt\right] \le c.
\]
\end{enumerate}
(That is, we vary $\Sigma$, $B$, and the probability space of definition.) Then $\Q$ is a relatively compact subset of $\P^p(\V \times \C^d)$.
\end{proposition}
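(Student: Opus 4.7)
The strategy is to verify relative compactness of each marginal separately, invoking Lemma \ref{le:productrelcompactness}, and then use Proposition \ref{pr:wasserstein} to upgrade tightness in the weak topology to relative compactness in the $p$-Wasserstein topology via the uniform $p'$-moment bounds supplied by condition~(5).

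For the $\V$-marginal, the plan is direct: since $q/T \in \P^p([0,T] \times A)$ for each $q \in \V$, the uniform bound $\sup_{Q \in \Q} \E^Q \int_0^T \int_A |a|^{p'}\Lambda_t(da)\, dt \le c$ with $p' > p$ supplies both tightness in $\P([0,T] \times A)$ (through compactness of balls in the Euclidean subset $A$) and the uniform integrability condition of Proposition \ref{pr:wasserstein}(2) with respect to the $p$-th moment; hence $\{Q \circ \Lambda^{-1}\}_{Q \in \Q}$ is relatively compact in $\P^p(\V)$.

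For the $\C^d$-marginal, the first step is to produce a uniform $p'$-moment bound $M := \sup_{Q \in \Q}\E^Q \|X\|_T^{p'} < \infty$. I would proceed exactly as in the proof of Lemma \ref{le:stateestimate}: apply BDG and Jensen to \eqref{def:SDE-strong-strong}-type SDE of point~(1), use the growth assumption in~(5) with $|B|^{p'} \lesssim 1 + |x|^{p'} + Z^{p'} + |a|^{p'}$ and $|\Sigma|^2 \lesssim 1 + |x|^{p_\sigma} + Z^{p_\sigma}$ with $p_\sigma \le 2 \wedge p'$, absorb the $Z$ and $|a|$ terms into the finite constant $c$, and close with Gronwall's inequality. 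The second step is Aldous' criterion: for stopping times $\tau \le \tau + \delta \le T$ with $\delta$ small, one splits the increment $X_{\tau+\delta} - X_\tau$ into its drift and martingale parts, then uses H\"older and BDG to obtain
\begin{align*}
\E^Q |X_{\tau+\delta} - X_\tau|^p &\lesssim \delta^{p-1}\E^Q\!\int_\tau^{\tau+\delta}\!\int_A (1 + |X_r|^p + Z^p + |a|^p)\Lambda_r(da) dr \\
&\quad + \E^Q\biggl(\int_\tau^{\tau+\delta}(1 + |X_r|^{p_\sigma} + Z^{p_\sigma})dr\biggr)^{p/2},
\end{align*}
which, thanks to the moment bound $M$ and to uniform integrability of $\int_0^T\int_A|a|^{p'}\Lambda_r(da)dr$ (again because $p' > p$), tends to $0$ uniformly in $Q \in \Q$ and in the choice of stopping time as $\delta \downarrow 0$. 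Combined with tightness of the initial distributions $\lambda$, this yields tightness of $\{Q \circ X^{-1}\}_{Q \in \Q}$ in $\P(\C^d)$.

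Finally, the uniform $p'$-moment bound $M$ together with $p' > p$ gives the uniform integrability hypothesis of Proposition \ref{pr:wasserstein}(2), upgrading tightness of the $\C^d$-marginals to relative compactness in $\P^p(\C^d)$. Lemma \ref{le:productrelcompactness} then concludes relative compactness of $\Q$ in $\P^p(\V \times \C^d)$. The main technical nuisance is the moment bound step when $p_\sigma \in (p, 2]$ is \emph{not} allowed (since we assume $p \ge p_\sigma$), so the diffusion contribution is absorbed cleanly; otherwise the argument is essentially the computation already carried out in Lemma \ref{le:stateestimate}, with $Z$ playing the role previously played by the integral against $\mu$.
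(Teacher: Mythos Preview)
Your proposal is correct and follows essentially the same approach as the paper's proof: both establish a uniform $p'$-moment bound on $\|X\|_T$ via a Gronwall argument \`a la Lemma \ref{le:stateestimate}, verify Aldous' criterion by splitting the increment into drift and martingale parts and exploiting $p' > p$ for the control term, and then combine relative compactness of the two marginals via Lemma \ref{le:productrelcompactness} and Proposition \ref{pr:wasserstein}. The only cosmetic difference is that the paper invokes Proposition \ref{pr:ptight} directly for the $\V$-marginal (viewing $\V \subset \P^p([0,T]\times A)$) rather than unpacking it through Proposition \ref{pr:wasserstein}, and it drops your harmless $\delta^{p-1}$ factor in front of the $|a|^p$ term.
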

\begin{proof}
For each $P \in \Q$ with corresponding probability space $(\Theta,(\G_t)_{t \in [0,T]},P)$ and coefficients $B$, $\Sigma$, standard estimates as in Lemma \ref{le:stateestimate} yield
\begin{align*}
\E^P\|X\|^{p'}_T \le C\E^P\left[1 + |X_0|^{p'} + Z^{p'} + \int_0^T\int_A|a|^{p'}\Lambda_t(da)dt\right].
\end{align*}
where $C > 0$ does not depend on $P$. Hence assumption (6) implies
\begin{align}
\sup_{P \in \Q}\E^P\|X\|^{p'}_T \le C(1 + c) < \infty. \label{pf:itocompact1}
\end{align}
Suppose we can show that $\Q_X := \{P \circ X^{-1} : P \in \Q\} \subset \P(\C^d)$ is tight. Then, from \eqref{pf:itocompact1} (and Proposition \ref{pr:wasserstein}) that $\Q_X$ is relatively compact in $\P^p(\C^d)$. Moreover,
\[
\sup_{P \in \Q}\E^P\int_0^T\int_A|a|^{p'}\Lambda_t(da)dt < \infty
\]
implies that $\{P \circ \Lambda^{-1} : P \in \Q\}$ is relatively compact in $\P^p(\V)$, by Proposition \ref{pr:ptight}. Hence, $\Q$ is relatively compact in $\P^p(\V \times \C^d)$, by Lemma \ref{le:productrelcompactness}. It remains to check that $\Q_X$ is tight, which we will check by verifying Aldous' criterion (see \cite[Lemma 16.12]{kallenberg-foundations}) for tightness, or
\begin{align}
\lim_{\delta\downarrow 0}\sup_{P \in \Q}\sup_\tau \E^P\left[|X_{(\tau + \delta) \wedge T} - X_\tau|^p\right] = 0, \label{pf:itocompact2}
\end{align}
where the supremum is over stopping times $\tau$ valued in $[0,T]$. The Burkholder-Davis-Gundy inequality implies that there exists a constant $C' > 0$ (which does not depend on $P$ but may change from line to line) such that
\begin{align*}
\E^P\left[|X_{(\tau + \delta) \wedge T} - X_\tau|^p\right] \le & \ C'\E^P\left[\left|\int_\tau^{(\tau + \delta) \wedge T}dt\int_A\Lambda_t(da)B(t,X_t,a)\right|^p\right] \\
	&+ C'\E^P\left[\left(\int_\tau^{(\tau + \delta) \wedge T}|\Sigma(t,X_t)|^2dt\right)^{p/2} \right] \\
	\le & \ C'\E^P\left[\left|\int_\tau^{(\tau + \delta) \wedge T}dt\int_A\Lambda_t(da)c(1 + \|X\|_T + Z + |a|)\right|^p\right] \\
	&+ C'\E^P\left[\left(\int_\tau^{(\tau + \delta) \wedge T}c(1 + \|X\|_T^{p_\sigma} + Z^{p_\sigma})dt\right)^{p/2} \right] \\\
	\le & \ C'\E^P\left[(\delta^p + \delta^{p/2}) (1+\|X\|_T^p + Z^p) + \int_\tau^{(\tau + \delta) \wedge T}\int_A|a|^p\Lambda_t(da)dt \right]
\end{align*}
The last line simply used Jensen's inequality with $p \ge 1$, and we used also the fact that $p_\sigma \le 2$. Since
\[
\sup_{P \in \Q}\E^P\left[\|X\|_T^p + Z^p\right] < \infty,
\]
it follows that
\[
\lim_{\delta\downarrow 0}\sup_{P \in \Q}\sup_\tau \E^P\left[(\delta^p + \delta^{p/2}) (1+\|X\|_T^p + Z^p)\right] = 0.
\]
By assumption,
\[
\sup_{P \in \Q}\E^P\int_0^T\int_A|a|^{p'}\Lambda_t(da)dt \le c < \infty,
\]
and since $p < p'$ it follows that
\[
\lim_{\delta \downarrow 0}\sup_{P \in \Q}\sup_\tau\E^P\int_\tau^{(\tau + \delta) \wedge T}\int_A|a|^p\Lambda_t(da)dt = 0.
\]
Putting this together proves \eqref{pf:itocompact2}.
\end{proof}

\section{Density of adapted controls} \label{ap:density}
The goal of this section is to prove Lemma \ref{le:adapteddensity}, which is essentially an \emph{adapted} analog of the following version of a classical result. 
\begin{proposition} \label{pr:density}
Suppose $E$ and $F$ are complete separable metric spaces and $\mu \in \P(E)$. 
If $\mu$ is nonatomic, then the set 
\[
\left\{\mu(dx)\delta_{\phi(x)}(dy) \in \P(E \times F) : \phi : E \rightarrow F \text{ is measurable} \right\}
\]
is dense in $\P(E,\mu;F) := \{P \in \P(E \times F) : P(\cdot \times F) = \mu \}$. If additionally $F$ is (homeomorphic to) a convex subset of a locally convex space $H$, then the set
\[
\left\{\mu(dx)\delta_{\phi(x)}(dy) \in \P(E \times F) : \phi : E \rightarrow F \text{ is continuous} \right\}
\]
is also dense in $\P(E,\mu;F)$.
\end{proposition}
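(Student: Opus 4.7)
The plan is to prove the two assertions in sequence, deducing the continuous case from the measurable one via a Lusin-plus-extension argument. For the first part, I would begin by disintegrating $P = \mu(dx) K(x,dy)$ for a Borel kernel $K : E \to \P(F)$. Since $\mu$ is nonatomic and $E$ is Polish, the Borel isomorphism theorem reduces the problem to $E = [0,1]$ with $\mu$ equal to Lebesgue measure. This is the only place where nonatomicity is really used, and it is decisive because it converts the abstract structure of $(E,\mu)$ into the dyadic structure of the unit interval.

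For each $n$ I would fix the dyadic partition $[0,1] = \bigcup_{i=0}^{2^n-1} I_i^n$ and form the averaged kernel $K_i^n := 2^n \int_{I_i^n} K(x,\cdot)\,dx \in \P(F)$. Using the standard fact that every Borel probability on a Polish space is the pushforward of a Lebesgue measure on an interval under some Borel map, I would define $\phi_n : [0,1] \to F$ piecewise by requiring that $\phi_n|_{I_i^n}$ pushes $\mathrm{Leb}|_{I_i^n}$ forward to $|I_i^n|\,K_i^n$. Then by construction, for every Borel $V \subset F$, $\int_{I_i^n} 1_V(\phi_n(x))\,dx = \int_{I_i^n} K(x,V)\,dx$, so the second marginal of $\mu \otimes \delta_{\phi_n}$ already equals that of $P$ exactly. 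To verify the full weak convergence $\mu \otimes \delta_{\phi_n} \to P$, it suffices to test against product functions $u(x)v(y)$ with $u \in C_b([0,1])$, $v \in C_b(F)$ (tightness being automatic from the fixed marginals). Partitioning over the $I_i^n$'s and replacing $u$ by its value at a representative point introduces an error $O(\omega_u(2^{-n})\|v\|_\infty)$, and the remainder becomes $\int u_n(x)\tilde v(x)\,dx$ with $\tilde v(x) := \int v\,dK(x,\cdot)$ and $u_n \to u$ uniformly; this converges to $\int u\tilde v\,d\mu$, which is the desired right-hand side.

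For the second part, once a measurable $\phi$ with $\mu \otimes \delta_\phi$ close to $P$ has been produced by part one, I would smooth $\phi$ into a continuous function. By Lusin's theorem, for every $\epsilon > 0$ there is a closed $K_\epsilon \subset E$ with $\mu(K_\epsilon) > 1 - \epsilon$ and $\phi|_{K_\epsilon}$ continuous. Dugundji's extension theorem---which uses precisely the hypothesis that $F$ is homeomorphic to a convex subset of a locally convex space---then yields a continuous extension $\tilde\phi : E \to F$ coinciding with $\phi$ on $K_\epsilon$. For any $g \in C_b(E \times F)$, the two measures $\mu \otimes \delta_\phi$ and $\mu \otimes \delta_{\tilde\phi}$ differ only on $E \setminus K_\epsilon$, giving a uniform bound of $2\|g\|_\infty \epsilon$, which tends to zero.

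The main obstacle is the weak-convergence verification in part one, where the maps $\phi_n$ are highly discontinuous and in no sense converge pointwise. The trick is not to make $\phi_n$ converge at all, but rather to exploit the product structure of the test functions to decouple the two variables: the freedom in choosing $\phi_n|_{I_i^n}$ is spent purely on matching the $F$-marginal on each dyadic piece, while uniform continuity of $u$ on the compact interval $[0,1]$ absorbs the $E$-side variation. The continuous-extension step in part two is comparatively routine once Dugundji is invoked, but the convex-subset assumption is essential there, since extending into a general metric space can fail.
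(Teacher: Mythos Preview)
Your part-two argument (Lusin's theorem plus Dugundji's extension, using convexity of $F$ to keep the extended range inside $F$) is exactly the paper's own. For part one the paper simply cites the Young-measure literature, so your explicit dyadic construction is a genuine addition; the averaging-and-pushforward trick is correct and yields a self-contained proof. One point does deserve a sentence of justification, however: you reduce $(E,\mu)$ to $([0,1],\mathrm{Leb})$ via a Borel isomorphism and then exploit the uniform continuity of $u\in C_b([0,1])$, but a Borel isomorphism is not a homeomorphism, so weak convergence in $\P(E\times F)$ is not a priori preserved under this reduction. What makes it legitimate is that on the slice $\P(E,\mu;F)$ with fixed $E$-marginal, weak convergence coincides with the stable (Young-measure) topology---convergence against bounded test functions measurable in $x$ and continuous in $y$---and the latter depends only on the measure-algebra structure of $(E,\mu)$ together with the topology of $F$, hence \emph{is} invariant under measure-preserving Borel isomorphisms. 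This equivalence follows from tightness of fixed-marginal families combined with a result like the paper's Lemma~\ref{le:componentwise}; once noted, your construction on $[0,1]$ transfers back to $E$ without difficulty. Alternatively, you could avoid the reduction entirely by partitioning $E$ directly into pieces of small diameter (rather than equal measure), using nonatomicity only to guarantee that each piece with positive mass supports a Borel surjection onto any prescribed law on $F$.
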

\begin{proof}
This first claim is well known, and can be found for example in \cite[Theorem 2.2.3]{youngmeasuresontopologicalspaces}. 
To prove the second claim from the first, it suffices to show that any measurable function $\phi : E \rightarrow F$ can be obtained as the $\mu$-a.s. limit of \emph{continuous} functions. By Lusin's theorem \cite[Theorem 7.1.13]{bogachev-measuretheory}, for each $\epsilon>0$ we may find a compact $K_\epsilon \subset E$ such that $\mu(K_\epsilon^c) \le \epsilon$ and the restriction $\phi|_{K_\epsilon} : K_\epsilon \rightarrow F$ is continuous. Using a generalization of the Tietze extension theorem due to Dugundji \cite[Theorem 4.1]{dugundji1951extension}, we may find a continuous function $\tilde{\phi}_\epsilon : E \rightarrow H$ such that $\tilde{\phi}_\epsilon = \phi$ on $K_\epsilon$ and such that the range $\tilde{\phi}_\epsilon(E)$ is contained in the convex hull of $\phi|_{K_\epsilon}(E)$, which is itself contained in the convex set $F$. We may thus view $\tilde{\phi}_\epsilon$ as a continuous function from $E$ to $F$. Since $\mu(\tilde{\phi}_\epsilon \neq \phi) \le \mu(K_\epsilon^c) \le \epsilon$, we may find a subsequence of $\tilde{\phi}_\epsilon$ which converges $\mu$-a.s. to $\phi$.
\end{proof}

As in  Lemma \ref{le:adapteddensity}, we work under assumption \ref{assumption:B}. Recall the definition of an adapted function, given in Definition \ref{def:adapted}.

\begin{proof}[Proof of Lemma \ref{le:adapteddensity}]
It is clear from the definition of an adapted function that $\A_a(\rho) \subset \A(\rho)$. Let $S = (\xi,B,W,\mu)$ abbreviate the identity map on $\Omega_0 \times \P^p(\X)$, and let and
\begin{align*}
S^t &:= (\xi,B_{\cdot \wedge t},W_{\cdot\wedge t},\mu^t), \text{ where} \\
\mu^t &:= \mu \circ (W_{\cdot \wedge t},1_{[0,t]}\Lambda,X_{\cdot \wedge t})^{-1}.
\end{align*}
On $\Omega' := \Omega_0 \times \P^p(\X) \times \V$, define the filtrations $(\F^S_t)_{t \in [0,T]}$ and $(\F^{S,\Lambda}_t)_{t \in [0,T]}$ by $\F^S_t := \sigma(S^t)$ and $\F^{S,\Lambda}_t := \sigma(S^t,1_{[0,t]}\Lambda)$. Equivalently, our notational conventions allow us to write $\F^S_t = \F^{\xi,B,W,\mu}_t$ and $\F^{S,\Lambda}_t = \F^{\xi,B,W,\mu,\Lambda}_t$.

Fix $Q \in \A(\rho) \subset \P^p(\Omega')$. It is clear that we may approximate elements of $\V$ (in the topology of $\V$) by piece-wise constant $\P(A)$-valued paths; that is, we may find a sequence of piece-wise constant $(\F^{S,\Lambda}_t)_{t \in [0,T]}$-adapted $\P(A)$-valued processes $(\alpha^k(t))_{t \in [0,T]}$ on $\Omega'$ such that $dt\alpha^k(t)(da) \rightarrow \Lambda$, $Q$-a.s., and a fortiori $Q \circ (S,dt\delta_{\alpha^k(t)}(da))^{-1} \rightarrow Q$ weakly. Since $A$ is compact and the $S$-marginal is fixed, this convergence happens also in $\P^p(\Omega')$, and thus we need not bother to distinguish $\P^p(\Omega')$-convergence from weak convergence in what follows. Here, a \emph{piece-wise constant} $\F^{S,\Lambda}_t$-adapted $\P(A)$-valued process $(\alpha(t))_{t \in [0,T]}$ is of the form
\[
\alpha(t) = a_01_{[0,t_0]}(t) + \sum_{i=1}^na_i1_{(t_i,t_{i+1}]}(t),
\]
where $a_0 \in \P(A)$ is deterministic, $a_i$ is an $\F^{S,\Lambda}_{t_i}$-measurable $\P(A)$-valued random variable, and $0 < t_0 < t_1 < \ldots < t_{n+1} = T$ for some $n$. 
It remains to show that, for any piece-wise constant $(\F^{S,\Lambda}_t)_{t \in [0,T]}$-adapted $\P(A)$-valued process $(\alpha(t))_{t \in [0,T]}$, there exists a sequence $(\alpha^k(t))_{t \in [0,T]}$ of $(\F^S_t)_{t \in [0,T]}$-adapted $\P(A)$-valued processes such that $Q \circ (S,dt\delta_{\alpha^k(t)}(da))^{-1} \rightarrow Q \circ (S,dt\delta_{\alpha(t)}(da))^{-1}$ weakly. The proof is an inductive application of Proposition \ref{pr:density}, the second part of which applies because of the convexity of $\P(A)$.

By the second part of Proposition \ref{pr:density}, there exists a sequence of continuous $\F^S_{t_1}$-measurable functions $a^j_1 : \Omega_0 \times \P^p(\X) \rightarrow \P(A)$ such that $Q \circ (S^{t_1},a^j_1(S))^{-1} \rightarrow Q \circ (S^{t_1},a_1)^{-1}$. Since $Q \in \A(\rho)$, $\F^{S,\Lambda}_t$ and $\F^S_T$ are conditionally independent given $\F^S_t$. In particular, $S$ and $(S^{t_1},a_1)$ are conditionally independent given $S^{t_1}$, and so are $S$ and $(S^{t_1},a^j_1(S))$. Now let $\phi : \Omega_0 \times \P^p(\X) \rightarrow \R$ be bounded and measurable, and let $\psi : \P(A) \rightarrow \R$ be continuous. Letting $\E$ denote expectation under $Q$, Lemma \ref{le:componentwise} implies
\begin{align*}
\lim_{j\rightarrow\infty}\E[\phi(S)\psi(a^j_1(S))] &= \lim_{j\rightarrow\infty}\E\left[\E\left[\left.\phi(S)\right| S^{t_1}\right]\psi(a^j_1(S))\right]	\\
	&= \E\left[\E\left[\left.\phi(S)\right| S^{t_1}\right]\psi(a_1)\right] \\
	&= \E\left[\E\left[\left.\phi(S)\right| S^{t_1}\right] \E\left[\left.\psi(a_1)\right| S^{t_1}\right]\right] \\
	&= \E\left[\E\left[\left.\phi(S)\psi(a_1)\right| S^{t_1}\right]\right] \\
	&= \E\left[\phi(S)\psi(a_1)\right]
\end{align*}
This is enough to show that $Q \circ (S,a^j_1(S))^{-1} \rightarrow Q \circ (S,a_1)^{-1}$ (see e.g. \cite[Proposition 3.4.6(b)]{ethierkurtz}).

We proceed inductively as follows: suppose we are given $a^j_1,\ldots,a^j_i : \Omega_0 \times \P^p(\X) \rightarrow \P(A)$ for some $i \in \{1,\ldots,n-1\}$, where $a^j_k$ is $\F^S_{t_k}$-measurable for each $k=1,\ldots,i$, and
\[
\lim_{j\rightarrow\infty}Q \circ (S,a^j_1(S),\ldots,a^j_i(S))^{-1} = Q \circ (S,a_1,\ldots,a_i)^{-1}.
\]
By Proposition \ref{pr:density}, there exists a sequence of continuous $\F^S_{t_{i+1}} \otimes \B(\P(A)^i)$-measurable functions $\hat{a}^k : (\Omega_0 \times \P^p(\X)) \times \P(A)^i \rightarrow \P(A)$ such that
\[
\lim_{k\rightarrow\infty}Q \circ (S^{t_{i+1}},a_1,\ldots,a_i,\hat{a}^k(S,a_1,\ldots,a_i))^{-1} = Q \circ (S^{t_{i+1}},a_1,\ldots,a_i,a_{i+1})^{-1}.
\]
It follows as above that in fact
\[
\lim_{k\rightarrow\infty}Q \circ (S,a_1,\ldots,a_i,\hat{a}^k(S,a_1,\ldots,a_i))^{-1} = Q \circ (S,a_1,\ldots,a_i,a_{i+1})^{-1}.
\]
By continuity of $\hat{a}^k$, it holds for each $k$ that
\[
\lim_{j\rightarrow\infty}Q \circ (S,a^j_1(S),\ldots,a^j_i(S),\hat{a}^k(S,a^j_1(S),\ldots,a^j_i(S)))^{-1} = Q \circ (S,a_1,\ldots,a_i,\hat{a}^k(S,a_1,\ldots,a_i))^{-1}
\]
These above two limits imply that there exists a subsequence $j_k$ such that 
\[
\lim_{k\rightarrow\infty}Q \circ (S,a^{j_k}_1(S),\ldots,a^{j_k}_i(S),\hat{a}^k(S,a^{j_k}_1(S),\ldots,a^{j_k}_i(S)))^{-1} = Q \circ (S,a_1,\ldots,a_i,a_{i+1})^{-1}
\]
Define $a^k_{i+1}(S) := \hat{a}^k(S,a^{j_k}_1(S),\ldots,a^{j_k}_i(S))$ 
to complete the induction.

By the above argument, we construct $n$ sequences $a^k_i : \Omega_0 \times \P^p(\X) \rightarrow \P(A)$, for $i=1,\ldots,n$, where $a^k_i$ is continuous and $\F^S_{t_i}$-measurable, and
\[
\lim_{k\rightarrow\infty}Q \circ (S,a^k_1(S),\ldots,a^k_n(S))^{-1} = Q \circ (S,a_1,\ldots,a_n)^{-1}.
\]
Define 
\[
\alpha^k(t) = a_01_{[0,t_0]}(t) + \sum_{i=1}^na^k_i(S)1_{(t_i,t_{i+1}]}(t).
\]
The map
\[
\P(A)^n \ni (\alpha_1,\ldots,\alpha_n) \mapsto dt\left[a_0(da)1_{[0,t_0]}(t) + \sum_{i=1}^n\alpha_i(da)1_{(t_i,t_{i+1}]}(t)\right] \in \V
\]
is easily seen to be continuous, and thus $Q \circ (S,dt\alpha^k(t)(da))^{-1} \rightarrow Q \circ (S,dt\alpha(t)(da))^{-1}$, completing the proof.
\end{proof}

\bibliographystyle{amsplain}
\bibliography{MFGcommonnoise-bib}

\end{document}